\documentclass[12pt]{article}

\usepackage{mathpazo}
\usepackage{eulervm}

\usepackage{amsmath, amssymb, amsthm}
\numberwithin{equation}{section}
\usepackage{geometry}
\usepackage{enumerate}

\usepackage{hyperref}

\newtheorem{theorem}{Theorem}[section]
\newtheorem{lemma}[theorem]{Lemma}
\newtheorem{fact}[theorem]{Fact}
\newtheorem{proposition}[theorem]{Proposition}
\newtheorem{corollary}[theorem]{Corollary}
\theoremstyle{definition}
\newtheorem{definition}[theorem]{Definition}
\theoremstyle{remark}
\newtheorem{remark}[theorem]{Remark}

\theoremstyle{plain}
\newtheorem{thmx}{Theorem}

\title{%
  {\bfseries A Tale of Two Idempotents}\\[0.35em]
  {\large\normalfont Casselman--Shalika and Spherical Genericity}%
}
\author{Yi Luo}
\date{}
\hypersetup{pdftitle={A Tale of Two Idempotents: Casselman-Shalika and Spherical Genericity},
  pdfauthor={Yi Luo}}

\begin{document}
\maketitle

\begin{abstract}
We give a self-contained Hecke-algebraic derivation of the Casselman--Shalika formula and J.-S.\ Li's genericity criterion for irreducible spherical representations of unramified groups, without using the unramified principal series or intertwining operators. The argument centers on the spherical and sign idempotents $e_K$ and $e_{\mathrm{sgn}}$ of the Iwahori--Hecke algebra $\mathcal H$. Their left ideals $\mathcal He_K=\mathcal Ae_K$ and $\mathcal He_{\mathrm{sgn}}=\mathcal Ae_{\mathrm{sgn}}$ are free of rank one over the Bernstein subalgebra $\mathcal A$. Describing $e_K\mathcal He_K$ inside $\mathcal Ae_K$ recovers the Satake isomorphism. Describing $e_K\mathcal He_{\mathrm{sgn}}$ inside $\mathcal Ae_{\mathrm{sgn}}$ yields rank-one freeness of the $K$-invariants of the Gelfand--Graev representation and the Casselman--Shalika formula. Symmetrically, describing $e_{\mathrm{sgn}}\mathcal He_K$ inside $\mathcal Ae_K$ determines when the sign-isotypic part of a spherical module is non-zero, and hence yields Li's genericity criterion.
\end{abstract}


\section{Introduction}

Let $F$ be a $p$-adic field with ring of integers $\mathcal{O}$, maximal ideal $\mathfrak{p}$, and residue cardinality $q$, and let $G$ be an unramified connected reductive group over $F$. Fix a Borel subgroup $B = TU$ defined over $F$, a maximal $F$-split torus $S \subseteq T$, the opposite Borel $\overline{B} = T\overline{U}$, a hyperspecial maximal compact subgroup $K$, and an Iwahori subgroup $I \subseteq K$.

The two idempotents of the title are the spherical idempotent $e_{K}$ and the sign idempotent $e_{\mathrm{sgn}}$ of the Iwahori--Hecke algebra $\mathcal{H}$. The idempotent $e_{K}$ governs spherical representations, whereas $e_{\mathrm{sgn}}$ governs Whittaker models. We compute $e_{K}\mathcal{H}e_{K}$ for every unramified group. Under the hypothesis $\rho^\vee\in X$, we also compute the mixed products $e_{K}\mathcal{H}e_{\mathrm{sgn}}$ and $e_{\mathrm{sgn}}\mathcal{H}e_{K}$, whose representation-theoretic consequences extend to every unramified group. The derivation is self-contained and uses only the Iwahori--Matsumoto relations, the Bernstein presentation, and the results on Iwahori invariants collected in Section \ref{sec:iwahori_fixed}. It requires neither intertwining operators nor the unramified principal series. These computations yield the Satake isomorphism, the Casselman--Shalika formula together with the underlying freeness theorem, and the genericity criterion for spherical representations.

\subsection*{Main results}

To state the results, write $X := X_*(S)$ for the cocharacter lattice of $S$, $X^+$ for its dominant cone, and $t_y := y(\varpi^{-1}) \in T(F)$ for $y \in X$, where $\varpi$ is a uniformizer of $F$. Let $W$ be the relative Weyl group, $\Sigma$ the reduced root system of non-multipliable relative roots, and $\rho^\vee \in X \otimes_{\mathbb{Z}} \mathbb{Q}$ the half-sum of the positive absolute coroots. Let $\mathcal{H} := \mathcal{H}(G, I)$ be the Iwahori--Hecke algebra, with Haar measure normalized by $\mathrm{vol}(I) = 1$. Its Bernstein presentation, available for every connected reductive group \cite{Rostami}, exhibits the commutative Bernstein subalgebra $\mathcal{A} = \mathrm{span}_{\mathbb{C}}\{\theta_\lambda : \lambda \in X\} \cong \mathbb{C}[X]$, on which $W$ acts. Write $T_w := \mathbf{1}_{IwI}$ for the standard basis elements of $\mathcal{H}$ and $q_w := [IwI : I]$, and set $W(q) := \sum_{w \in W} q_w$ and $W(q^{-1}) := \sum_{w \in W} q_w^{-1}$. Explicitly, the two idempotents are
$$e_{K} \,=\, \frac{1}{W(q)}\sum_{w \in W} T_{w}, \qquad e_{\mathrm{sgn}} \,=\, \frac{1}{W(q^{-1})}\sum_{w \in W} (-1)^{\ell(w)}\,q_{w}^{-1}\, T_{w}.$$
For non-split unramified groups, unequal Hecke parameters may occur. Each simple root $a \in \Sigma$ carries the parameter $q_a := [I s_a I : I]$ and a second Bruhat--Tits parameter $q_{a*}$, attached to the affine wall $\{a = 1\}$. One has $q_{a*} = q_a$ unless $a$ is divisible in the relative root system $\Phi$. Set $c_1(a) := (q_a q_{a*})^{1/2}$ and $c_2(a) := (q_a/q_{a*})^{1/2}$. All four parameters are extended $W$-invariantly to $\Sigma$ (Section \ref{sec:notation}).

For $y \in X^+$, the Weyl character
\begin{equation*}
\mathrm{ch}V_y := \frac{\mathrm{Alt}\big(\theta_{y + \rho^\vee}\big)}{\mathrm{Alt}\big(\theta_{\rho^\vee}\big)}, \qquad \mathrm{Alt}(F) := \sum_{w \in W}(-1)^{\ell(w)}\, w(F),
\end{equation*}
is a well-defined element of $\mathcal{A}^W$ even when $\rho^\vee \notin X$ (Fact \ref{fact:chV}(2)). As $y$ runs over $X^+$, these elements form a $\mathbb{C}$-basis of $\mathcal{A}^W$. For split $G$, $\mathrm{ch}V_y$ is the irreducible character of the dual group with highest weight $y$. For general unramified $G$, see Remark \ref{rem:chV_interpretation}.

Fix a character $\omega$ of $\overline{U}$ of \emph{conductor $\mathcal{O}$}: trivial on $\overline{U} \cap K$, with non-trivial restriction to the next larger step of the root-group filtration along each simple root (Section \ref{sec:notation}). Its values are roots of unity, so the complex conjugate $\overline{\omega}$ equals $\omega^{-1}$. The Gelfand--Graev representation attached to the conjugate character $\overline{\omega}$ is the compact induction $\mathrm{ind}_{\overline{U}}^{G}\,\overline{\omega}$, and the spherical Whittaker functions below belong to the smooth induction $\mathrm{Ind}_{\overline{U}}^{G}\,\omega$. For $y \in X^+$, let $\Phi_y \in (\mathrm{ind}_{\overline{U}}^{G}\,\overline{\omega})^K$ be the function supported on $\overline{U}t_yK$ with $\Phi_y(u\,t_y\,k) = \delta_{\overline{B}}(t_y)^{1/2}\,\overline{\omega}(u)$. These functions form a basis of the $K$-invariants by Lemma \ref{lem:whittaker_support}(2).

The Casselman--Shalika formula and J.-S.\ Li's genericity criterion are classical. Our contribution is a uniform Hecke-algebraic derivation, including the unequal-parameter identity below and its applications to arbitrary unramified groups. The mixed-product identities in Theorems \ref{thmx:CS} and \ref{thmx:genericity} assume $\rho^\vee \in X$, while their representation-theoretic statements hold for every unramified $G$.

Theorem \ref{thm:spherical_iso} proves the classical Satake isomorphism directly from the Bernstein cross-relation.

\begin{thmx}\label{thmx:satake}
$e_{K}\mathcal{H}e_{K} \,=\, \mathcal{A}^{W}e_{K}$, and $F \mapsto Fe_{K}$ is an algebra isomorphism from $\mathcal{A}^{W}$ onto the spherical Hecke algebra $\mathcal{H}_{K} := e_{K}\mathcal{H}e_{K}$.
\end{thmx}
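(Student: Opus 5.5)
We first establish that $\mathcal{H}e_K=\mathcal{A}e_K$ and that this module is free of rank one over $\mathcal A$, as the abstract announces. By the Bernstein presentation, $\mathcal H=\mathcal A\otimes H_W$ as vector spaces, where $H_W=\mathrm{span}\{T_w:w\in W\}$ is the finite Hecke algebra. For each simple reflection we have $T_{s}e_K=q_s e_K$, so $H_We_K=\mathbb{C}e_K$, and therefore $\mathcal He_K=\mathcal A e_K$. For freeness, suppose $Fe_K=0$ with $F\in\mathcal A$. Expand $e_K=W(q)^{-1}\sum_w T_w$ and write $F=\sum c_\lambda\theta_\lambda$. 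By uniqueness of the Bernstein decomposition, the coefficient of $T_1$ in $Fe_K$, computed in the basis $\{\theta_\lambda T_w\}$, is $W(q)^{-1}F$. Hence $F=0$, and $F\mapsto Fe_K$ is a linear bijection $\mathcal A\to\mathcal He_K$.

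Next we identify $e_K\mathcal He_K$ with the $\mathcal A$-span of elements fixed under the left action. An element $h\in\mathcal He_K$ lies in $e_K\mathcal He_K$ exactly when $T_sh=q_sh$ for all simple $s$. Write $h=Fe_K$ and use the Bernstein cross-relation
\[
T_s\theta_\lambda-\theta_{s\lambda}T_s=(\text{const})\,\frac{\theta_\lambda-\theta_{s\lambda}}{1-\theta_{-a^\vee}}
\]
(in the unequal-parameter case, the constant is a combination of $c_1(a)$ and $c_2(a)$, with denominator $1-\theta_{-2a^\vee}$ when appropriate). Together with $T_se_K=q_se_K$, this gives
\[
T_sFe_K=\bigl(q_s\,s(F)+\text{const}\cdot\tfrac{F-s(F)}{1-\theta_{-a^\vee}}\bigr)e_K .
\]
By freeness, the condition $T_sFe_K=q_sFe_K$ becomes an identity in $\mathcal A$:
\[
(F-s(F))\Bigl(q_s-\tfrac{\text{const}}{1-\theta_{-a^\vee}}\Bigr)=0 .
\]
The second factor is a nonzero element of the fraction field of the domain $\mathcal A\cong\mathbb C[X]$, so $F=s(F)$. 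Conversely, if $F\in\mathcal A^W$, the same computation shows $T_sFe_K=q_sFe_K$ for every simple $s$. Hence $Fe_K$ is left $H_W$-invariant and equals $e_KFe_K$. This proves $e_K\mathcal He_K=\mathcal A^We_K$.

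Finally we check multiplicativity. For $F,F'\in\mathcal A^W$ we have $e_KF'e_K=F'e_K$, so
\[
(Fe_K)(F'e_K)=F\,(e_KF'e_K)=FF'e_K .
\]
Injectivity follows from freeness, so the map $F\mapsto Fe_K$ is an algebra isomorphism $\mathcal A^W\to\mathcal H_K$, and it sends $1$ to the unit $e_K$.

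The main obstacle is the cross-relation computation in the unequal-parameter case. There one must verify that the factor multiplying $F-s(F)$ is nonzero for divisible roots, where the relation carries both $c_1(a)$ and $c_2(a)$. I would first handle the reduced case and then treat the $(q_a,q_{a*})$ form separately, checking nonvanishing by looking at leading terms in $\theta_{a^\vee}$.
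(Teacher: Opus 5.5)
Your proof is correct and is essentially the paper's argument. The shared steps are: freeness of $\mathcal{H}e_K=\mathcal{A}e_K$ via the $T_e$-coordinate; reduction of membership in $e_K\mathcal{H}e_K$ to $T_{s_\alpha}(Fe_K)=q_{s_\alpha}Fe_K$; the cross-relation giving $(F-s_\alpha(F))(\mathcal{G}_a-q_a)=0$ in the fraction field; and multiplicativity via $e_KF'e_K=F'e_K$.

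The differences are only cosmetic. With the paper's normalization of $\theta_\lambda$ through anti-dominant translations, the reduced-case denominator is $1-\theta_{a^\vee}$ rather than $1-\theta_{-a^\vee}$. The paper certifies that the cleared factor $q_a\theta_{2a^\vee}+(c_1(a)-c_2(a))\theta_{a^\vee}-1$ is nonzero by its constant term $-1$ instead of its leading term. Your leading-term check works equally well for divisible roots.
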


The second theorem is the Casselman--Shalika formula with its underlying freeness, proved in Proposition \ref{prop:eKHe_sgn} and Theorems \ref{thm:bbf_generalized}, \ref{thm:freeness_general}, and \ref{thm:CS_general}.

For $a\in\Sigma$, set
$$f_a := \begin{cases} 1 - q_a\,\theta_{-a^\vee} & \text{if } a \text{ is not divisible},\\[2pt] \big(1 - c_1(a)\,\theta_{-a^\vee}\big)\big(1 + c_2(a)\,\theta_{-a^\vee}\big) & \text{if } a \text{ is divisible}\end{cases}.$$

\begin{thmx}\label{thmx:CS}
If $\rho^\vee\in X$, then
$e_{K}\mathcal{H}e_{\mathrm{sgn}} \,=\, \mathcal{A}^{W}\,\Omega^{+}e_{\mathrm{sgn}}$, where
$$\Omega^{+} = \theta_{\rho^\vee} \prod_{a \in \Sigma^+} f_a,$$
and explicitly
\begin{equation}\label{eq:bbf_intro}
e_{K}\,F\,e_{\mathrm{sgn}} = \frac{1}{W(q)}\cdot\frac{\mathrm{Alt}(F)}{\mathrm{Alt}\big(\theta_{\rho^{\vee}}\big)}\,\Omega^{+}\, e_{\mathrm{sgn}}, \qquad F \in \mathcal{A}.
\end{equation}
For every unramified $G$, writing $h_y := (\mathrm{ch}V_y)e_K \in \mathcal{H}_K$, one has
$$\Phi_0 * h_y = \Phi_y, \qquad y \in X^+,$$
and $(\mathrm{ind}_{\overline{U}}^{G}\,\overline{\omega})^K$ is a free right $\mathcal{H}_K$-module of rank one generated by $\Phi_0$. For every algebra character $\chi$ of $\mathcal{H}_K$, every $0 \neq \mathcal{W} \in (\mathrm{Ind}_{\overline{U}}^{G}\,\omega)^K$ with $h \cdot \mathcal{W} = \chi(h)\,\mathcal{W}$ for all $h \in \mathcal{H}_K$ satisfies $\mathcal{W}(t_y) = 0$ for $y \notin X^+$, and
\[
\mathcal{W}(t_y) = \delta_{\overline{B}}(t_y)^{1/2}\,\chi(h_y)\,\mathcal{W}(e) \qquad (y \in X^+).
\]
In particular, $\mathcal{W}(e) \neq 0$.
\end{thmx}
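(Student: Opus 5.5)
The proof splits into an algebraic half, the identity \eqref{eq:bbf_intro}, and a representation-theoretic half.

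\textbf{Algebraic half.} Since $\mathcal H e_{\mathrm{sgn}}=\mathcal A e_{\mathrm{sgn}}$ and $e_K\mathcal H=e_K\mathcal A$ (from $\mathcal H=\mathcal A\cdot\mathcal H_0$ with $\mathcal H_0=\mathrm{span}\{T_w:w\in W\}$ finite), every element of $e_K\mathcal He_{\mathrm{sgn}}$ has the form $e_KFe_{\mathrm{sgn}}$ with $F\in\mathcal A$. So it suffices to compute $e_KFe_{\mathrm{sgn}}$ as $G_F e_{\mathrm{sgn}}$ for a unique $G_F\in\mathcal A$.

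The first step is to show that $F\mapsto G_F$ is anti-invariant: $G_{wF}=(-1)^{\ell(w)}G_F$ after the appropriate normalization. It suffices to check this for simple reflections $s$. By the Bernstein cross relation,
\[
T_s\theta_\lambda-\theta_{s\lambda}T_s=(\text{rational factor})\,(\theta_\lambda-\theta_{s\lambda}).
\]
Combine this with $e_KT_s=q_se_K$ and $T_se_{\mathrm{sgn}}=-e_{\mathrm{sgn}}$. Then $e_K\big(T_s F-(sF)T_s\big)e_{\mathrm{sgn}}$ yields the linear relation
\[
q_s\,e_KFe_{\mathrm{sgn}}+e_K(sF)e_{\mathrm{sgn}}=e_K\,c_s(F-sF)\,e_{\mathrm{sgn}},
\]
where $c_s$ is the Bernstein factor. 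In the divisible case the factor involves $c_1,c_2$.

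Rather than iterate this relation, I would characterize $G_F$ directly. I would first show that $G_F$ is divisible by $\Omega^+$ and that $G_F/\Omega^+$ is $W$-invariant. Since $\mathcal A e_{\mathrm{sgn}}$ is free, the map $F\mapsto G_F$ is $\mathcal A^W$-linear. Hence it is determined by its values on an $\mathcal A^W$-basis, or more simply by the claim
\[
G_F=\frac{1}{W(q)}\frac{\mathrm{Alt}(F)}{\mathrm{Alt}(\theta_{\rho^\vee})}\,\Omega^+ .
\]
The right side is $\mathcal A^W$-linear in $F$. Both sides satisfy the simple-reflection relation above, which is checked rank one at a time by factoring $\Omega^+=\theta_{\rho^\vee}f_a\prod_{b\neq a}f_b$, with $s_a$ permuting $\Sigma^+\setminus\{a\}$. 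Since $\mathcal A$ is spanned over $\mathcal A^W$ by $W$-translates of finitely many elements, the identity then reduces to one normalization, $F=\theta_{\rho^\vee}$, or equivalently $F=1$ after averaging. There $e_K\theta_{\rho^\vee}e_{\mathrm{sgn}}$ must equal $\Omega^+e_{\mathrm{sgn}}/W(q)$. I expect this normalization, and the rank-one verification with unequal parameters in the divisible case, to be the main obstacle. I would handle it by an induction on $\ell(w_0)$ using the rank-one case, where $e_K,e_{\mathrm{sgn}}$ are explicit $2\times2$ computations. Taking $F=\theta_{y+\rho^\vee}$ then gives $e_K\mathcal He_{\mathrm{sgn}}=\mathcal A^W\Omega^+e_{\mathrm{sgn}}$, since the Weyl characters span $\mathcal A^W$.

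\textbf{Representation-theoretic half.} Identify $(\mathrm{ind}\,\overline\omega)^I$ with a right $\mathcal H$-module. By the Iwahori-invariants results of Section~\ref{sec:iwahori_fixed}, which I take as given, it is $\cong e_{\mathrm{sgn}}\mathcal H$, free of rank one, with generator the Iwahori Whittaker function supported on $\overline U I$. Then $K$-invariants correspond to $e_{\mathrm{sgn}}\mathcal He_K$.

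When $\rho^\vee\in X$, apply the anti-involution $T_w\mapsto T_{w^{-1}}$ to the identity above to obtain $e_{\mathrm{sgn}}\mathcal He_K=e_{\mathrm{sgn}}\Omega^{+\prime}\mathcal A^We_K$. This shows freeness of rank one over $\mathcal H_K$ via Theorem~\ref{thmx:satake}. Evaluating supports via Lemma~\ref{lem:whittaker_support} identifies the generator with $\Phi_0$ and $\Phi_0*h_y$ with $\Phi_y$. The $\delta^{1/2}$ comes from comparing the normalization of $\theta_y$ with $T_{t_y}$, and the triangularity of $\mathrm{ch}V_y$ forces the leading coefficient to be $1$.

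For general $G$, where $\rho^\vee\notin X$, I would pass to a $z$-extension or to $G\times$ a torus enlarging $X$ so that $\rho^\vee\in X$. The Whittaker and Hecke data restrict compatibly, since $\Phi_y$ and $h_y$ are defined intrinsically.

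\textbf{Final statement.} Given an eigenvector $\mathcal W$, define the pairing $\langle \Phi,\mathcal W\rangle=\int_{\overline U\backslash G}\Phi\,\mathcal W$. Then $\langle\Phi_y,\mathcal W\rangle$ is a nonzero constant times $\delta^{1/2}(t_y)\mathcal W(t_y)$, and $\langle \Phi_0*h_y,\mathcal W\rangle=\chi(h_y)\langle\Phi_0,\mathcal W\rangle$. This gives the formula. Vanishing for $y\notin X^+$ is the standard conductor argument from Lemma~\ref{lem:whittaker_support}. Finally, $\mathcal W(e)=0$ would force $\mathcal W$ to vanish on all of $\overline UTK=G$, so $\mathcal W(e)\neq0$.
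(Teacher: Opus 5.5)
There are genuine gaps. The most serious is that the exact identity $\Phi_0*h_y=\Phi_y$ is never established. Identifying $(\mathrm{ind}_{\overline U}^G\overline\omega)^K$ with a rank-one $\mathcal A^W$-module such as $e_{\mathrm{sgn}}\mathcal He_K$ gives freeness abstractly. It does not tell you which element corresponds to $\Phi_y$. Triangularity can at best give $\Phi_0*h_y=\Phi_y+\sum_{z<y}c_z\Phi_z$, and the vanishing of the $c_z$ \emph{is} the Casselman--Shalika formula. Notice that your representation-theoretic half never uses the factor $\mathrm{Alt}(F)/\mathrm{Alt}(\theta_{\rho^\vee})$, which is the only source of Weyl characters.

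The missing idea is the twisted Satake computation at conductor $\mathfrak p$. For $x\in X^{++}$ one has $e_K\cdot(\theta_{-x}\cdot\mathrm{ch}_I^{\overline\psi})=\frac{q_{w_0}}{W(q)}\phi_x$, a single basis vector. This rests on the double-coset fact that $t_yu\in Kt_{-x}I$ with $u\in\overline U$ forces $y=x$ and $u\in\overline U(\mathfrak p)$. Combined with the BBF formula at $F=\theta_{-x}$, it gives $\phi_x=q_{w_0}^{-1}\frac{\mathrm{Alt}(\theta_{-x})}{\mathrm{Alt}(\theta_{\rho^\vee})}\Omega^+e_{\mathrm{sgn}}\cdot\mathrm{ch}_I^{\overline\psi}$. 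Then, for $F=\mathrm{ch}V_x$, one gets $\phi_{\rho^\vee}*h_x=\bar F\cdot\phi_{\rho^\vee}=\phi_{x+\rho^\vee}$. This uses that the geometric anti-involution sends $Fe_K$ to $\bar Fe_K$, and that $\bar F\,\mathrm{Alt}(\theta_{-\rho^\vee})=\mathrm{Alt}(\theta_{-(x+\rho^\vee)})$. Translation by $t_{\rho^\vee}$ then moves the result to conductor $\mathcal O$.

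Relatedly, the Iwahori result you cite is for conductor $\mathfrak p$. For $\overline\omega$ of conductor $\mathcal O$, the function supported on $\overline UI$ has $K$-average $\frac{q_{w_0}}{W(q)}\Phi_0\neq0$, so it is not $e_{\mathrm{sgn}}$-fixed. The correct generator is its $t_{\rho^\vee}$-translate, supported on $\overline Ut_{-\rho^\vee}I$.

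On the algebraic side, your reduction is sound and is a real shortcut. Applying your relation to $F$ and to $sF$ and adding gives $(q_s+1)(G_F+G_{sF})=0$. So $F\mapsto G_F$ is $W$-anti-invariant and $\mathcal A^W$-linear. Since $\mathrm{Alt}(\mathcal A)=\mathcal A^W\mathrm{Alt}(\theta_{\rho^\vee})$, you get $G_F=\frac{\mathrm{Alt}(F)}{\mathrm{Alt}(\theta_{\rho^\vee})}G_{\theta_{\rho^\vee}}$ directly. (The alternative normalization $F=1$ is useless, since $e_Ke_{\mathrm{sgn}}=0$.)

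But this puts the entire content into $G_{\theta_{\rho^\vee}}=W(q)^{-1}\Omega^+$, and ``induction on $\ell(w_0)$ from rank one'' is not a mechanism. $\sum_{w\in W}T_w$ is not a product of rank-one symmetrizers, and already in type $A_2$ the term $\mathcal L_{s_2s_1}(\theta_{\rho^\vee})$ is not a monomial. The paper proceeds in four steps:
\begin{enumerate}
\item It shows $Fe_{\mathrm{sgn}}\in e_K\mathcal He_{\mathrm{sgn}}$ if and only if each $s_\alpha(F)/F$ equals $s_\alpha(\Omega^+)/\Omega^+$.
\item It uses Newton polytopes to show that $F/\Omega^+$ has no denominator. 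The key point is that the admissible constants $\pm q^n$ with $n\ge1$ are never mutually inverse.
\item Over generic $q$, Gauss's lemma and $\deg_q\sum_w\mathcal L_w(F)\le\sum_{a\in\Sigma^+}d(a)=\deg_q\Omega^+$ show that the quotient is independent of $q$.
\item Specializing at $q=1$ and applying the Weyl denominator formula determines the quotient.
\end{enumerate}

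Finally, your passage to general $G$ fails as stated. For $G\times T_0$ the half-sum is $(\rho^\vee,0)$, which is integral if and only if $\rho^\vee\in X$. A $z$-extension does not help either, since $\mathrm{SL}_2$ is its own. The paper uses the central quotient $G''=G/C$ with $X_*(S'')=X+\mathbb Z\rho^\vee$. Because $p(K)$ can be a proper subgroup of $K''$, nothing simply restricts. The transfer needs the Hecke embedding $j$, the Cartan-preimage lemma, and the fact that $\mathrm{ch}V_x\,\mathrm{ch}V_\mu$ expands in $(\mathcal A'')^W$ only over $\nu\in X^+$.
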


The third theorem is the genericity criterion, proved in Proposition \ref{prop:esgnHeK}, Theorem \ref{thm:genericity_Z}, and Theorem \ref{thm:genericity_general}.

\begin{thmx}\label{thmx:genericity}
Set
\begin{equation}\label{eq:Z_intro}
Z := \prod_{a \in \Sigma} f_a \in \mathcal{A}^W.
\end{equation}
This element is defined for every unramified $G$. If $\rho^\vee\in X$, then
$$e_{\mathrm{sgn}}\mathcal{H}e_{K} \,=\, \mathcal{A}^{W}\,\Omega^{-}e_{K},$$
where $\Omega^{-}$ is the image of $\Omega^{+}$ under $\theta_\lambda \mapsto \theta_{-\lambda}$, and $Z=\Omega^{-}\Omega^{+}$. For every unramified $G$, the irreducible spherical representation $V_G(\chi)$ with Satake parameter $\chi$ (Definition \ref{def:spherical_rep}) satisfies
$$\mathrm{Hom}_{\overline{U}}\big(V_G(\chi),\, \omega\big) \neq 0 \iff \chi(Z) \neq 0.$$
\end{thmx}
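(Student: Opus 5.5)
The proof splits into three parts. First comes the algebraic computation of $e_{\mathrm{sgn}}\mathcal{H}e_K$ when $\rho^\vee\in X$. Second comes the identity $Z=\Omega^-\Omega^+$ and the $W$-invariance of $Z$. Third, the representation-theoretic criterion is obtained by translating $\mathrm{Hom}_{\overline U}(V_G(\chi),\omega)$ into the sign-isotypic Iwahori invariants.

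For the first part I would mirror the proof of the $e_K\mathcal{H}e_{\mathrm{sgn}}$ identity in Theorem~\ref{thmx:CS}, with the roles of the two idempotents exchanged. Since $\mathcal{H}e_K=\mathcal{A}e_K$ is free of rank one over $\mathcal{A}$, it suffices to compute $e_{\mathrm{sgn}}Fe_K$ for $F\in\mathcal{A}$. The element $e_{\mathrm{sgn}}$ is characterized inside $\mathcal{H}$ by $T_{s}e_{\mathrm{sgn}}=-e_{\mathrm{sgn}}$ for every simple reflection $s$. So $e_{\mathrm{sgn}}Fe_K=Ge_K$ for some $G\in\mathcal{A}$, and $G$ is forced by the condition $T_s Ge_K=-Ge_K$. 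Writing this condition out with the Bernstein cross-relation for $T_s\theta_\lambda$, and using $T_se_K=q_se_K$, turns it into a twisted anti-invariance of $G$ under $s$. The correction factor is exactly the rank-one factor $f_{-a}$ (respectively its product form in the divisible case). This shows that $G$ lies in $\mathrm{Alt}$-type expressions times $\Omega^-$. Explicitly, I expect
\[
e_{\mathrm{sgn}}Fe_K=\frac{1}{W(q^{-1})}\frac{\mathrm{Alt}(F)}{\mathrm{Alt}(\theta_{-\rho^\vee})}\,\Omega^-e_K .
\]
This follows from the previous step by applying the involution $\theta_\lambda\mapsto\theta_{-\lambda}$ together with the anti-involution $T_w\mapsto T_{w^{-1}}$. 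Alternatively, it can be checked directly in rank one and then propagated using a reduced-word factorization of $e_{\mathrm{sgn}}$. Since $\mathrm{Alt}(\mathcal{A})/\mathrm{Alt}(\theta_{-\rho^\vee})=\mathcal{A}^W$, this gives $e_{\mathrm{sgn}}\mathcal{H}e_K=\mathcal{A}^W\Omega^-e_K$.

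The identity $Z=\Omega^-\Omega^+$ is a formal check. We have $\Omega^-=\theta_{-\rho^\vee}\prod_{a>0}f_a^{\iota}$, and $f_a^\iota$ is $f_{-a}$ by definition. Thus $\Omega^-\Omega^+=\prod_{a\in\Sigma}f_a$. This product is $W$-invariant because $W$ permutes $\Sigma$ and the parameters are $W$-invariant. Since $Z$ involves only the $f_a$, it is defined without assuming $\rho^\vee\in X$.

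For the criterion, I would use the Iwahori-invariants results of Section~\ref{sec:iwahori_fixed} to identify $\mathrm{Hom}_{\overline U}(V,\omega)$ with $\mathrm{Hom}(e_{\mathrm{sgn}}V^I,\mathbb{C})$ for $V$ generated by $I$-fixed vectors. Equivalently, genericity is equivalent to $e_{\mathrm{sgn}}V^I\neq0$, via the Gelfand--Graev module $(\mathrm{ind}\,\overline\omega)^I\cong\mathcal{H}e_{\mathrm{sgn}}$. Take $V=V_G(\chi)$ with $V^I$ the simple $\mathcal{H}$-module containing the spherical line $v_0=e_Kv_0$. Then $V^I=\mathcal{H}v_0$, so $e_{\mathrm{sgn}}V^I=e_{\mathrm{sgn}}\mathcal{H}e_Kv_0=\mathcal{A}^W\Omega^-v_0$. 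Now $e_{\mathrm{sgn}}V^I\neq0$ iff $\Omega^-v_0\neq0$ after applying $e_{\mathrm{sgn}}$. To decide this, pair with $e_K\mathcal{H}e_{\mathrm{sgn}}$: by simplicity, $e_{\mathrm{sgn}}V^I\ne0$ iff $e_K\mathcal{H}e_{\mathrm{sgn}}\mathcal{H}e_Kv_0\neq0$. The product $e_K\Omega^+e_{\mathrm{sgn}}\Omega^-e_K$ is computed with the explicit formulas and equals a nonzero constant times $Ze_K$, up to an $\mathcal{A}^W$-unit coming from the $\mathrm{Alt}$ quotients. Hence it acts on $v_0$ by a nonzero multiple of $\chi(Z)$.

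The main obstacle is removing the hypothesis $\rho^\vee\in X$. I would do this by passing to a z-extension, or to a central modification $\tilde G$ with the same derived group and $\rho^\vee\in\tilde X$. The spherical representation and the Whittaker functional restrict compatibly to the image of $G$, and $Z$ is unchanged because it only involves coroots. The delicate point is making sure that the Satake parameters correspond and that genericity is preserved under restriction.
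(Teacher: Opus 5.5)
Your argument for $\rho^\vee\in X$ is sound in outline, but the passage to general $G$ has a genuine gap, in two places.

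First, the auxiliary groups you propose cannot put $\rho^\vee$ into the cocharacter lattice. Suppose $\tilde G\to G$ is a central extension, for instance a z-extension. Then $X_*(\tilde T)\to X_*(T)$ sends coroots to coroots, hence $\rho^\vee$ to $\rho^\vee$. So $\rho^\vee\in X_*(\tilde T)$ would already force $\rho^\vee\in X_*(T)$. Now suppose instead that $\tilde G$ has the same derived group, with maximal torus $T_{\mathrm{der}}$. Since $\tilde T/T_{\mathrm{der}}$ is a torus, $X_*(\tilde T)\cap\bigl(X_*(T_{\mathrm{der}})\otimes\mathbb Q\bigr)=X_*(T_{\mathrm{der}})$. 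Hence again $\rho^\vee\in X_*(\tilde T)$ if and only if $\rho^\vee\in X_*(T)$. For example, $\mathrm{SL}_2$ is its own z-extension, and neither $\mathrm{SL}_2$ nor $\mathrm{GL}_2$ contains $\tfrac12\alpha^\vee$. You must instead pass to a central \emph{quotient} $p:G\to G''=G/C$ with lattice $X+\mathbb Z\rho^\vee$, as in Lemma \ref{lem:extended_group}.

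Second, the point you call delicate is where the content lies, and you give no mechanism for it. The image $p(G(F))$ can have finite index in $G''(F)$, so $V_{G''}(\chi'')\circ p$ may split into several irreducible summands. Only the summands containing a component of $v_{K''}$ are identified with $V_G(\chi)$, and a Whittaker functional on $V_{G''}(\chi'')$ can vanish on all of them. This really happens. In the $\mathrm{SL}_2$ case of Remark \ref{rem:SL2} with $z(\alpha^\vee)=-1$, the spherical summand is not generic for conductor $\mathfrak p$. Every conductor-$\mathfrak p$ functional on the $\mathrm{PGL}_2$-representation therefore dies on it.

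The paper closes this gap with three ingredients:
\begin{enumerate}
\item Satake parameters are matched through the embedding $j$ and the identity $h=j(h)\circ p$ on $\mathcal H_K$ (Lemmas \ref{lem:j_embedding} and \ref{lem:cartan_preimage}, Corollary \ref{cor:hecke_action_compatibility}), after $z$ is extended from $X$ to $X''$.
\item Silberger's finite decomposition of $\pi''\circ p$.
\item Decisively, the Casselman--Shalika nonvanishing $\mathcal W(e)\neq0$ for conductor $\mathcal O$ (Lemma \ref{lem:functional_nonvanishing} on $G''$, i.e.\ Theorem \ref{thm:CS_O} there). It forces $\lambda''(v_{K''})\neq0$, so $\lambda''$ is non-zero on a spherical summand.
\end{enumerate}
The converse direction extends a functional on $\pi_i\cong V_G(\chi)$ to $\pi''$ through the projection $\operatorname{pr}_i$.

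Within the case $\rho^\vee\in X$, your direct route works: twisted anti-invariance plus the coprimality argument of Proposition \ref{prop:eKHe_sgn}. Your genericity step is the argument of Theorem \ref{thm:genericity_Z}. In fact $\Omega^+e_{\mathrm{sgn}}\cdot\Omega^-e_K=Ze_K$ exactly, with no units or constants. Three corrections are needed.
\begin{enumerate}
\item The symmetry you cite is not the right one. The map $T_w\mapsto T_{w^{-1}}$ fixes both idempotents, so it only yields a description of the form $e_{\mathrm{sgn}}(\cdots)$. Moreover, no map fixing $T_s$ and inverting $\theta_\lambda$ respects the cross-relation, since it would force $s_\alpha(\mathcal G_a)=\mathcal G_a$. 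The right tool is the Iwahori--Matsumoto automorphism $\iota$ of Lemma \ref{lem:IM}, which swaps $e_K$ and $e_{\mathrm{sgn}}$.
\item Transporting Theorem \ref{thm:bbf_generalized} by $\iota$ gives the constant $1/W(q)$, not $1/W(q^{-1})$. For $\mathrm{PGL}_2$ one finds $e_{\mathrm{sgn}}\theta_{\rho^\vee}e_K=\frac{q\theta_{\rho^\vee}-\theta_{-\rho^\vee}}{1+q}e_K$.
\item Identifying $\omega$-genericity with $e_{\mathrm{sgn}}V^I\neq0$ requires first translating by $t_{\rho^\vee}$ to conductor $\mathfrak p$ (Lemma \ref{lem:conductor_shift}, Proposition \ref{prop:translation_iso}), and only then using Fact \ref{fact:CS}. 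By Remark \ref{rem:SL2} this identification fails when $\rho^\vee\notin X$, which is why the general case cannot be handled at the Iwahori level.
\end{enumerate}
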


Theorem \ref{thmx:genericity} recovers the genericity condition of J.-S.\ Li \cite[Theorem 2.2]{Li92}, with Li's condition applied componentwise as explained in Remark \ref{rem:Li}. For split $G$, one has $Z = \prod_{\alpha \in \Phi}(1 - q\,\theta_{\alpha^\vee})$.

When $\rho^\vee \notin X$, the translation element $t_{\rho^\vee}$, the weights $\theta_{\pm\rho^\vee}$, and the factors $\Omega^{\pm}$ no longer exist, but the product form \eqref{eq:Z_intro} of $Z$ still does. Section \ref{sec:extended} removes this constraint. We use a central isogeny $p : G \to G''$, where $G''$ is unramified and its cocharacter lattice contains $\rho^\vee$. An embedding $j : \mathcal{H} \hookrightarrow \mathcal{H}''$ of Iwahori--Hecke algebras and restriction along $p$ transfer the results back to $G$ (Lemmas \ref{lem:extended_group}, \ref{lem:j_embedding}, and \ref{lem:transfer}).

\subsection*{The method}
The original proof of the Casselman--Shalika formula evaluates the Whittaker functional on Casselman's basis of Iwahori-fixed vectors in the unramified principal series, through functional equations with respect to the intertwining operators \cite{CS80}. Gurevich and Gurevich--Karasiewicz \cite{Gu13, GK22} derived this classical formula from a new source. Under the assumption $\rho^\vee \in X$, they proved that the $K$-invariants of the Gelfand--Graev representation $\mathrm{ind}_{\overline{U}}^{G}\,\overline{\psi}$, with $\overline{\psi}$ the conjugate of a character $\psi$ of conductor $\mathfrak{p}$, form a free $\mathcal{H}_{K}$-module of rank one. On the other hand, Chan and Savin \cite{CS18} (split groups) identified its $I$-invariants with the left ideal $\mathcal{H}e_{\mathrm{sgn}}$. We observe that the two results are related via the action of $e_{K}$ and the sign case of an identity of Brubaker, Bump, and Friedberg \cite[Theorem 5]{BBF16}. They arrive at the identity from the perspective that unique functionals on the universal unramified principal series are encoded by linear characters of the finite Hecke algebra. This paper recasts the algebraic core of \cite{BBF16} in terms of idempotents in the Iwahori--Hecke algebra. Moreover, the symmetric counterpart of the BBF identity, obtained by exchanging $e_K$ and $e_{\mathrm{sgn}}$, yields J.-S.\ Li's genericity criterion.

Our argument rests on the Bernstein presentation and the two free $\mathcal A$-modules of rank one $\mathcal{H}e_K = \mathcal{A}e_K$ and $\mathcal{H}e_{\mathrm{sgn}} = \mathcal{A}e_{\mathrm{sgn}}$. The second is further identified with the Iwahori invariants $(\mathrm{ind}_{\overline{U}}^{G}\,\overline{\psi})^I$ of the Gelfand--Graev representation. We study, in order, how $e_K\mathcal{H}e_K$, $e_K\mathcal{H}e_{\mathrm{sgn}}$, and $e_{\mathrm{sgn}}\mathcal{H}e_K$ sit inside $\mathcal{A}e_K$, $\mathcal{A}e_{\mathrm{sgn}}$, and $\mathcal{A}e_K$. The first computation applies to every unramified $G$ and recovers the Satake isomorphism. The Bernstein cross-relation identifies $e_K\mathcal H e_K$ inside $\mathcal A e_K$ with the $W$-invariant subspace $\mathcal A^W e_K$.

The other two computations assume $\rho^\vee\in X$. The computation of $e_K\mathcal{H}e_{\mathrm{sgn}}$ proves Theorem \ref{thmx:CS}, extending the sign case of the Brubaker--Bump--Friedberg (BBF) identity. It yields rank-one freeness first at conductor $\mathfrak p$ and then at conductor $\mathcal O$, with the Weyl characters appearing as the ratios $\mathrm{Alt}(\theta_{y+\rho^\vee})/\mathrm{Alt}(\theta_{\rho^\vee})$ in \eqref{eq:bbf_intro}. Proposition \ref{prop:twisted_satake} gives a Hecke-algebraic form of the twisted Satake transform of Gurevich and Gurevich--Karasiewicz \cite{Gu13, GK22}. From there we follow Gurevich--Karasiewicz closely, using the induction--compact induction pairing of Lemma \ref{lem:pairing}, which also appears in their work, to derive the Casselman--Shalika formula. Finally, the Iwahori--Matsumoto involution exchanges $e_K$ and $e_{\mathrm{sgn}}$, converting the preceding computation into $e_{\mathrm{sgn}}\mathcal H e_K=\mathcal A^W\Omega^-e_K$. This yields the genericity criterion for the spherical representation $V_G(\chi)$: it is generic exactly when $\chi(Z)\neq0$, where $Z=\prod_{a\in\Sigma}f_a=\Omega^-\Omega^+$ (Theorem \ref{thmx:genericity}). Section \ref{sec:extended} then passes to a central quotient whose cocharacter lattice contains $\rho^\vee$ and transfers the Casselman--Shalika formula as well as the genericity criterion to every unramified group.

\subsection*{Organization}

Section \ref{sec:notation} collects the preliminaries. Section \ref{sec:iwahori_fixed} identifies the two free $\mathcal{A}$-modules underlying the argument in this paper. Section \ref{sec:background} proves the Satake isomorphism via $e_K\mathcal{H}e_K$. Sections \ref{sec:bbf}--\ref{sec:cs} assume $\rho^\vee\in X$. Section \ref{sec:bbf} computes $e_K\mathcal H e_{\mathrm{sgn}}$ and proves the BBF identity. Section \ref{sec:ggp} proves rank-one freeness of the $K$-invariants of the Gelfand--Graev representation of conductor $\mathfrak p$, and Section \ref{sec:conductorO} transfers the result to conductor $\mathcal O$. Section \ref{sec:cs} proves the Casselman--Shalika formula and the genericity criterion. Section \ref{sec:extended} removes the hypothesis $\rho^\vee\in X$.

\section*{Acknowledgements}
The author thanks Professor Edmund Karasiewicz for explaining the IMRN paper he coauthored with Professor Nadya Gurevich and for sharing many insights from their work. The author is grateful to Professor Caihua Luo for inviting him to present this work at the Chinese University of Hong Kong, Shenzhen, in June 2024. The author thanks Professor Kei Yuen Chan and Professor Gordan Savin for inspiring conversations. The author was partially supported by a C.~R.~Wiley Instructorship at the University of Utah.

\section*{Tool and computational resource disclosure}

When preparing the paper, the human author used AI tools at two levels. At the model level, Claude Opus 4.8, Claude Fable 5, and GPT 5.6-Sol were used. At the agent level, a custom agent built by the author for mathematical research was used along with Claude Code and Codex. These tools assisted with mathematical development, exposition, citation checking, and proofreading. The human author verified all mathematical content and takes full responsibility for it.

\section{Preliminaries}\label{sec:notation}

\subsection{The field and the group}
Let $F/\mathbb{Q}_p$ be finite, with ring of integers $\mathcal{O}$, maximal ideal $\mathfrak{p}$, uniformizer $\varpi$, and residue field $k$ of cardinality $q$. Let $G/F$ be an unramified connected reductive group. Fix a maximal $F$-split torus $S \subseteq G$ and put $T := Z_G(S)$, a maximal torus. Fix an $F$-Borel $B = TU$ with unipotent radical $U$, and let $\overline{B} = T\overline{U}$ be the opposite Borel. Choose a hyperspecial vertex in the apartment of $S$. Let $G_{\mathcal O}$ be the associated reductive $\mathcal O$-model, and let $T_{\mathcal O}\subseteq G_{\mathcal O}$ be the unramified maximal torus with generic fiber $T$. Then
$$K := G_{\mathcal O}(\mathcal{O})$$
is a hyperspecial maximal compact subgroup of $G(F)$ and contains $T(\mathcal O):=T_{\mathcal O}(\mathcal O)$, the maximal compact subgroup of $T(F)$.
When no confusion can arise, we use the same symbols for these algebraic groups and their groups of $F$-points.

\subsection{Root data}
Let $\Phi := \Phi(G,S)$ be the relative root system, $\Phi^+$ the positive roots determined by $B$, $\Delta \subseteq \Phi^+$ the simple roots, and $W = N_G(S)(F)/T(F)$ the relative Weyl group with simple generators $\{s_\alpha\}_{\alpha\in\Delta}$ and length function $\ell$. The system $\Phi$ may fail to be reduced. A root $\alpha \in \Phi$ is \emph{multipliable} if $2\alpha \in \Phi$ and \emph{divisible} if $\tfrac{1}{2}\alpha \in \Phi$. Let
$$\Sigma := \{a \in \Phi : 2a \notin \Phi\}$$
be the set of non-multipliable roots. It is a reduced root system with the same Weyl group $W$ and positive system $\Sigma^+ = \Sigma \cap \Phi^+$. For a non-divisible $\beta \in \Phi$, we write
$$\tilde{\beta} := \begin{cases}\beta & \text{if } \beta \text{ is not multipliable},\\ 2\beta & \text{if } \beta \text{ is multipliable}\end{cases},$$
the unique element of $\Sigma$ on the ray of $\beta$. The simple roots of $\Sigma$ are the $\tilde\alpha$ with $\alpha \in \Delta$, and $s_{\tilde\alpha} = s_\alpha$. For unramified $G$, $\Sigma$ is the Bruhat--Tits \'echelonnage root system \cite[Theorem 6.3]{Haines18} (see also \cite[Section 4]{Tits}), and it agrees with Rostami's scaled root system, with root and coroot $\tilde\beta$ and $\tilde\beta^\vee$ on the ray of $\beta$ \cite[Section 3.3]{Rostami}.

Let $X := X_*(S)$ be the cocharacter lattice of $S$ and $X^*(S)$ the character lattice, with pairing $\langle\,\cdot\,,\,\cdot\,\rangle : X^*(S) \times X \to \mathbb{Z}$. Each $a \in \Sigma$ has a coroot $a^\vee \in X$. If $a$ is divisible, then $\langle a, X\rangle \subseteq 2\mathbb{Z}$, because $\tfrac{1}{2}a \in X^*(S)$. We set
$$m_a := \begin{cases} 1 & \text{if } a \in \Sigma \text{ is not divisible},\\ 2 & \text{if } a \in \Sigma \text{ is divisible}\end{cases},$$
a $W$-invariant function on $\Sigma$.

Let $E/F$ be the minimal splitting extension of $G$, an unramified extension whose Galois group is generated by the Frobenius $\sigma$. Write $X_*(T):=\operatorname{Hom}_{E\text{-grp}}(\mathbb G_{m,E},T_E)$, equipped with its Frobenius action. Let $\Phi_{\mathrm{abs}} := \Phi(G_E, T_E)$ be the absolute root system with the positive system $\Phi^+_{\mathrm{abs}}$ determined by $B_E$. Restriction from $T$ to $S$ carries $\Phi^+_{\mathrm{abs}}$ onto $\Phi^+$ and absolute simple roots to relative simple roots, and $X = X_*(T)^\sigma$ inside $X_*(T)$. Set
$$\rho^\vee := \tfrac{1}{2}\sum_{\beta\in\Phi^+_{\mathrm{abs}}}\beta^\vee \in X_*(T)\otimes\mathbb{Q}, \qquad 2\rho := \sum_{\beta\in\Phi^+_{\mathrm{abs}}}\beta|_S \in X^*(S).$$
Since $\sigma$ permutes $\Phi^+_{\mathrm{abs}}$, the element $\rho^\vee$ is $\sigma$-invariant and lies in $X\otimes\mathbb{Q}$. From $\langle\beta,\rho^\vee\rangle = 1$ for every absolute simple root $\beta$, we obtain
\begin{equation}\label{eq:rho_pairings}
\langle \alpha, \rho^\vee\rangle = 1 \quad (\alpha \in \Delta), \qquad \langle a, \rho^\vee\rangle = m_a \quad (a \in \Sigma \text{ simple}).
\end{equation}
Let $X^+$ and $X^{++}$ consist of the $\lambda\in X$ satisfying $\langle\alpha,\lambda\rangle\geq 0$ and $\langle\alpha,\lambda\rangle>0$, respectively, for every $\alpha\in\Delta$, and put $X^-:=-X^+$ and $X^{--}:=-X^{++}$. For $\lambda \in X$, write $t_\lambda := \lambda(\varpi^{-1}) \in T(F)$. The map $\lambda \mapsto t_\lambda$ identifies $X$ with $T(F)/T(\mathcal{O})$. The modular character of $B$ satisfies $\delta_B(t_\lambda) = q^{\langle 2\rho,\lambda\rangle}$ and $\delta_{\overline{B}} = \delta_B^{-1}$, and the Cartan and Iwasawa decompositions read
$$G = \bigsqcup_{\lambda \in X^+} K t_\lambda K, \qquad G = \bigsqcup_{\lambda \in X} \overline{U}\, t_\lambda\, K.$$

\subsection{Root groups, integral intersections, and the Iwahori subgroup}
For every relative root $\gamma\in\Phi$, let $U_\gamma$ be its relative root subgroup and put $d_\gamma:=\dim_FU_\gamma$. For every $\beta\in\Phi$, if $2\beta\notin\Phi$, put $U_{2\beta}=\{1\}$ and $d_{2\beta}=0$. If $\beta\in\Phi$ is non-divisible, then $U_{2\beta}$ is central in $U_\beta$ and $U_\beta/U_{2\beta}$ is abelian \cite[Lemma 3.3.8]{CGP}. Define the two Moy--Prasad intersections at the hyperspecial vertex by
$$U_{\beta,0}:=U_\beta(F)\cap K,\qquad U_{\beta,0+}:=U_\beta(F)\cap\ker\bigl(K\to G(k)\bigr).$$

Let $I\subseteq K$ be the preimage of $B(k)$ under reduction. The direct-spanning result \cite[Proposition 7.3.12]{KP} gives, in any fixed order of the non-divisible roots $\beta\in\Phi^+$,
$$U(F)\cap K=\prod_\beta U_{\beta,0},\qquad \overline U(F)\cap K=\prod_\beta U_{-\beta,0},\qquad \overline U(F)\cap I=\prod_\beta U_{-\beta,0+}.$$
Only as shorthand for these intersections, we now write
$$U(\mathcal O):=U(F)\cap K,\qquad \overline U(\mathcal O):=\overline U(F)\cap K,\qquad \overline U(\mathfrak p):=\overline U(F)\cap I.$$
The Iwahori factorization \cite[Corollary 7.4.9]{KP} is
$$I=U(\mathcal O)\,T(\mathcal O)\,\overline U(\mathfrak p).$$
Then $K=\bigsqcup_{w\in W}IwI$. Set
$$q_w := [IwI : I] \qquad (w \in \widetilde{W}),$$
where $\widetilde{W} := N_G(S)(F)/T(\mathcal{O}) \cong W \ltimes X$ is the Iwahori--Weyl group, with $\lambda \in X$ embedded as $t_\lambda$ and with length function $\ell$. Then $q_{uv} = q_u\, q_v$ whenever $\ell(uv) = \ell(u) + \ell(v)$, and for a simple reflection
$$q_{s_\alpha} = \bigl|U_{-\alpha,0}/U_{-\alpha,0+}\bigr| = q^{\,d_\alpha}, \qquad \alpha \in \Delta.$$
Since the simple parameters are constant on $W$-orbits, the rule $q_{\tilde\alpha}:=q_{s_\alpha}$ extends to a $W$-invariant function $a\mapsto q_a$ on $\Sigma$. We write $q_a=q^{d(a)}$. Define
$$W(q) := \sum_{w\in W}q_w = [K : I], \qquad W(q^{-1}) := \sum_{w\in W}q_w^{-1}.$$
We normalize the Haar measure on $G$ so that $\mathrm{vol}(I) = 1$. Consequently, $\mathrm{vol}(K) = W(q)$. For $\lambda\in X^{-}$,
$$[It_\lambda I : I] = q_{t_\lambda} = \prod_{\substack{\beta\in\Phi^+\\ \beta\text{ non-divisible}}}q^{-(d_\beta+d_{2\beta})\langle\beta,\lambda\rangle} = q^{-\langle 2\rho,\lambda\rangle} = \delta_{\overline B}(t_\lambda).$$
For the longest element $w_0 \in W$,
$$q_{w_0} = \prod_{a\in\Sigma^+}q_a = \big[\overline{U}(\mathcal{O}) : \overline{U}(\mathfrak{p})\big],$$
by the direct-spanning decomposition above.

\subsection{Convolution}
The algebra $C_c(G)$ of compactly supported locally constant functions on $G$ carries the convolution $(f_1 * f_2)(g) = \int_G f_1(h)\, f_2(h^{-1}g)\, dh$. Every smooth representation of $G$ is a $C_c(G)$-module through $\varphi \cdot v := \int_{G} \varphi(h)\, hv\, dh$. On functions on $G$ under right translation, $(\varphi\cdot f)(g) = \int_G \varphi(h)\,f(gh)\,dh$. With the geometric anti-involution $\check{\varphi}(g) := \varphi(g^{-1})$,
$$\varphi\cdot f \,=\, f * \check{\varphi}, \qquad \text{equivalently} \qquad f * \varphi \,=\, \check{\varphi}\cdot f.$$

\subsection{The Iwahori--Hecke algebra and the Bernstein presentation}\label{subsec:bernstein}
The Iwahori--Hecke algebra $\mathcal{H} = \mathcal{H}(G,I)=C_c(I\backslash G/I)$ has unit $\mathbf{1}_I$. For $w \in \widetilde{W}$, set $T_w := \mathbf{1}_{IwI}$. The set $\{T_w\}_{w\in\widetilde{W}}$ is a $\mathbb{C}$-basis of $\mathcal{H}$, and the Iwahori--Matsumoto relations hold \cite{IM65}, \cite[Proposition 4.1.1]{Rostami}:
$$T_u T_v = T_{uv} \quad \text{whenever } \ell(uv) = \ell(u) + \ell(v), \qquad T_{s_\alpha}^{2} = (q_{s_\alpha}-1)\,T_{s_\alpha} + q_{s_\alpha} \quad (\alpha \in \Delta).$$ With the idempotent $e_K$ defined in Section \ref{subsec:idempotents}, the spherical Hecke algebra is $\mathcal{H}_K := e_K \mathcal{H} e_K=C_c(K\backslash G/K)$.

The Bernstein elements $\{\theta_\lambda\}_{\lambda\in X}$ of $\mathcal{H}$ are defined by
$$\theta_\lambda = [I t_\lambda I : I]^{-1/2}\,\mathbf{1}_{I t_\lambda I} = q_{t_\lambda}^{-1/2}\,T_{t_\lambda} \qquad \text{for } \lambda \in X^-,$$
and extended to $X$ multiplicatively via $\theta_\lambda = \theta_{\lambda_1}\theta_{\lambda_2}^{-1}$ for any decomposition $\lambda = \lambda_1 - \lambda_2$ with $\lambda_i \in X^-$. Set $\mathcal{A} := \mathrm{span}_\mathbb{C}\{\theta_\lambda : \lambda \in X\} \cong \mathbb{C}[X]$, with $w(\theta_\lambda) = \theta_{w\lambda}$, and let $\mathcal{H}_0 := \mathcal{H}(K,I)$ be the finite Hecke algebra. For $\alpha \in \Delta$, with $a = \tilde\alpha \in \Sigma$, the Bernstein cross-relation is
\begin{equation}\label{eq:cross_relation}
T_{s_\alpha} F = s_\alpha(F)\,T_{s_\alpha} + \mathcal{G}_a\cdot\big(F - s_\alpha(F)\big), \qquad \mathcal{G}_a := \frac{q_a - 1 + \big(c_1(a) - c_2(a)\big)\,\theta_{a^\vee}}{1 - \theta_{2a^\vee}}, \qquad F\in\mathcal{A}.
\end{equation}
In the hyperplane notation of \cite[Section 7.1]{Rostami}, write $q(H)$ for the Bruhat--Tits parameter of an affine wall $H$. Then $q_a=q(\{a=0\})$, and we set $q_{a*}:=q(\{a=1\})$. The constants in \eqref{eq:cross_relation} are
$$c_1(a) := \big(q_a\, q_{a*}\big)^{1/2}, \qquad c_2(a) := \big(q_a/ q_{a*}\big)^{1/2}.$$
Wall parameters are constant on $\widetilde{W}$-orbits, and $c_1(a)c_2(a)=q_a$. If $a$ is not divisible, then $(c_1(a),c_2(a))=(q_a,1)$. If $a=2\alpha'$ is divisible, with $\alpha'\in\Delta$ multipliable, the rank-one parameter table gives
$$q_{a*}=q^{d_{2\alpha'}},\qquad d_{\alpha'}=3d_{2\alpha'},\qquad c_1(a)=q^{2d_{2\alpha'}},\qquad c_2(a)=q^{d_{2\alpha'}}.$$
The equality in the non-divisible case and the identities in the divisible case follow from \cite[Sections 1.8 and 4]{Tits}. In the divisible case, $\langle a,X\rangle\subseteq2\mathbb Z$, so $\mathcal G_a(F-s_\alpha(F))\in\mathcal A$.

Our $\theta_\lambda$ is Rostami's $\Theta_{-\lambda}$. The finite geometric expansion of $\mathcal{G}_a\big(F-s_\alpha(F)\big)$ turns \eqref{eq:cross_relation} into Rostami's relation \cite[Section 5.4]{Rostami}, with even and odd coefficients $q_a-1$ and $c_1(a)-c_2(a)$. This is the two-parameter relation of \cite[Proposition 3.6]{Lus89}.

\begin{fact}[Bernstein presentation]\label{fact:bernstein}
The elements $\theta_\lambda T_w$ with $\lambda \in X$ and $w \in W$ form a $\mathbb{C}$-basis of $\mathcal{H}$. Moreover, $\mathcal{H}$ is presented as the associative $\mathbb{C}$-algebra generated by the subalgebras $\mathcal{H}_0$ and $\mathcal{A} \cong \mathbb{C}[X]$ subject to their internal relations together with the cross-relation \eqref{eq:cross_relation}. In particular, an algebra homomorphism out of $\mathcal{H}$ may be specified by giving it on $\mathcal{H}_0$ and on $\mathcal{A}$ and checking that the images satisfy the quadratic, braid, lattice, and cross-relations.
\end{fact}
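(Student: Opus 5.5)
The statement just given is Fact~\ref{fact:bernstein}, the Bernstein presentation. The plan is to reduce it to the literature, namely \cite{Rostami} for general unramified $G$ and \cite{Lus89} for the two-parameter form, after matching conventions with our normalizations. The argument has three steps: a spanning step, an independence step, and a presentation step.

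\textbf{Spanning.} Let $\mathcal H'$ be the span of the products $\theta_\lambda T_w$. We prove $\mathcal H'=\mathcal H$ by showing that $\mathcal H'$ contains every $T_x$, $x\in\widetilde W$.
\begin{enumerate}[(i)]
\item Using the cross-relation \eqref{eq:cross_relation}, applied to a generator $T_{s_\alpha}$ of $\mathcal H_0$ and an element $F\in\mathcal A$, we see that $\mathcal H'$ is stable under left multiplication by each $T_{s_\alpha}$. It is clearly stable under left multiplication by $\mathcal A$. Hence $\mathcal H'$ is a left ideal containing $1$.
\item Write $x=t_\lambda w$. By the Iwahori--Matsumoto length additivity, $T_x$ equals $T_{t_\mu}$, with $\mu$ antidominant, times products of the $T_{s}$ and their inverses. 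The inverses are available because $T_s^{-1}=q_s^{-1}T_s+(q_s^{-1}-1)$.
\item A triangularity argument does the bookkeeping. We show $T_x\in\theta_\lambda T_w+\sum_{y<x}\mathbb C\,T_y$ with respect to the Bruhat order. We compare the leading terms: $\theta_\lambda$ is supported on $It_\lambda I$ modulo lower cells, using $\theta_\lambda=\theta_{\lambda_1}\theta_{\lambda_2}^{-1}$ and $T_{t_{\lambda_2}}^{-1}$.
\end{enumerate}

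\textbf{Independence.} Linear independence follows from the same unitriangularity: the map $\theta_\lambda T_w\mapsto T_{t_\lambda w}$ is triangular with nonzero diagonal coefficients. A dimension count on each filtration piece then gives a basis.

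\textbf{Presentation.} Let $\widehat{\mathcal H}$ be the abstract algebra generated by $\mathcal H_0$ and $\mathbb C[X]$, subject to their internal relations and the cross-relations.
\begin{enumerate}[(i)]
\item The relations hold in $\mathcal H$, so there is a surjection $\widehat{\mathcal H}\to\mathcal H$ by the spanning step.
\item The cross-relation moves every $T_s$ to the right past $\mathcal A$. Hence $\widehat{\mathcal H}$ is spanned by the words $\theta_\lambda T_w$.
\item These words map to a basis of $\mathcal H$ by the independence step, so the surjection is an isomorphism.
\end{enumerate}

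\textbf{Main obstacle.} The hard step is verifying that the cross-relation actually holds in $\mathcal H$ with the stated unequal parameters $c_1(a),c_2(a)$ in the divisible case. This is genuinely a computation in the rank-one Levi: one must identify $q_{a*}$ with the parameter of the affine wall $\{a=1\}$. Rather than redo it, we would cite \cite[Section 5.4]{Rostami}, translating via $\theta_\lambda=\Theta_{-\lambda}$. We would then check that the finite geometric expansion of $\mathcal G_a(F-s_\alpha F)$ reproduces Rostami's even and odd coefficients $q_a-1$ and $c_1(a)-c_2(a)$. The parity facts from $\langle a,X\rangle\subseteq2\mathbb Z$ in the divisible case make this expansion finite.
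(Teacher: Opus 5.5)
Your treatment of the substantive input matches the paper's. The paper gives no proof beyond citing \cite{Rostami}, with \cite{Lus89} for the two-parameter form. It checks that $\theta_\lambda$ is Rostami's $\Theta_{-\lambda}$ and that the finite geometric expansion of $\mathcal G_a\bigl(F-s_\alpha(F)\bigr)$ reproduces Rostami's even and odd coefficients $q_a-1$ and $c_1(a)-c_2(a)$, taking the parameter values from Tits. Your spanning argument (i)--(ii) is correct and already complete without (iii). Deriving the presentation from ``basis plus cross-relation'' is also fine.

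The independence step, however, does not work as written. You claim that $\theta_\lambda T_w$ is a non-zero multiple of $T_{t_\lambda w}$ modulo Bruhat-lower terms, and you justify this only by the leading term of $\theta_\lambda$. Right multiplication by $T_w$ does not respect leading terms when $\ell(t_\lambda w)<\ell(t_\lambda)+\ell(w)$, because then $T_{t_\lambda}T_w$ has terms lying above $t_\lambda w$.

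Concretely, take $\mathrm{SL}_2$ in the paper's conventions. There $t_{-\alpha^\vee}=s_1s_0$, so that $t_{-\alpha^\vee}s_1$ is length-additive. Hence $t_{\alpha^\vee}s_1=s_0$ and $\theta_{\alpha^\vee}=qT_{s_0}^{-1}T_{s_1}^{-1}=q^{-1}T_{s_0s_1}-(1-q^{-1})(T_{s_0}+T_{s_1})+q^{-1}(q-1)^2$. The leading term alone contributes $q^{-1}T_{s_0s_1}T_{s_1}=(1-q^{-1})T_{s_0s_1}+T_{s_0}$. Its top term $T_{s_0s_1}$ lies above $s_0$ and is cancelled only by the lower term $-(1-q^{-1})T_{s_0}$ of $\theta_{\alpha^\vee}$; in fact $\theta_{\alpha^\vee}T_{s_1}=T_{s_0}-(q-1)$. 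So the triangularity is true (for instance by alcove-walk expansions), but your bookkeeping does not establish it. The dimension count you mention presupposes it as well.

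The standard remedy avoids triangularity altogether. For $\mu\in X^-$ one has $\ell(t_\mu w)=\ell(t_\mu)+\ell(w)$ for all $w\in W$. This follows from the length additivity of $t_\mu w_0$ used in the proof of Proposition~\ref{prop:anti_involution_center}, and it gives $\theta_\mu T_w=q_{t_\mu}^{-1/2}T_{t_\mu w}$ exactly. Given a finite relation $\sum c_{\lambda,w}\theta_\lambda T_w=0$, multiply on the left by the unit $\theta_\nu$, with $\nu$ so antidominant that $\nu+\lambda\in X^-$ for every $\lambda$ occurring. The result is a relation among the $T_{t_{\nu+\lambda}w}$, which belong to pairwise distinct elements of $\widetilde W$, so all $c_{\lambda,w}=0$.
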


For $f \in \mathcal{A}^W$ and $\alpha \in \Delta$, the $\mathcal{G}_a$-term of the cross-relation \eqref{eq:cross_relation} vanishes, so $f$ commutes with each $T_{s_\alpha}$, hence with $\mathcal{H}_0$. Since $\mathcal{A}$ is commutative, Fact \ref{fact:bernstein} implies that $\mathcal{A}^W$ is contained in the center of $\mathcal{H}$. Although the arguments of this paper need only this inclusion, Rostami proves the following equality \cite{Rostami}.

\begin{fact}\label{fact:center}
The center of $\mathcal{H}$ equals $\mathcal{A}^W$.
\end{fact}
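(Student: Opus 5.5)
The inclusion $\mathcal A^W\subseteq Z(\mathcal H)$ is already established just above, so it remains to show that every central element lies in $\mathcal A^W$. Let $z\in Z(\mathcal H)$. By Fact \ref{fact:bernstein}, write $z=\sum_{w\in W}F_w T_w$ uniquely with $F_w\in\mathcal A$. The plan has two steps. First, commutation with $\mathcal A$ will force $z\in\mathcal A$. Second, commutation with the $T_{s_\alpha}$ will force $z$ to be $W$-invariant.

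For the first step, I will use the triangularity of the cross-relation \eqref{eq:cross_relation}. Induction on $\ell(w)$ gives, for $\lambda\in X$,
$$T_w\theta_\lambda=\theta_{w\lambda}T_w+\sum_{v<w}(\ast)\,T_v,$$
with coefficients in $\mathcal A$, where $<$ is the Bruhat order. Now suppose $z\notin\mathcal A$, and choose $w\neq 1$ maximal in Bruhat order with $F_w\neq0$. Compare the $T_w$-coefficients of $\theta_\lambda z$ and $z\theta_\lambda$. They are $\theta_\lambda F_w$ and $F_w\theta_{w\lambda}$, and these must be equal. Since $\mathcal A\cong\mathbb C[X]$ is a domain, this gives $\theta_\lambda=\theta_{w\lambda}$ for all $\lambda$. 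Hence $w$ acts trivially on $X$.

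This is the step I expect to be the main obstacle. The relative Weyl group $W$ acts faithfully on $X\otimes\mathbb Q$, since it is generated by the reflections $s_a$ with $a^\vee\neq0$ and acts faithfully on the span of the coroots. So $w\lambda=\lambda$ for all $\lambda\in X$ forces $w=1$, a contradiction. The delicate points are the triangularity in the two-parameter setting and the fact that $\mathcal G_a(F-s_\alpha F)$ lies in $\mathcal A$. Both are noted after \eqref{eq:cross_relation}: the even-lattice remark covers the divisible case. Therefore $z=F_1\in\mathcal A$.

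For the second step, write $F:=z\in\mathcal A$ and fix $\alpha\in\Delta$. The cross-relation gives
$$0=T_{s_\alpha}F-FT_{s_\alpha}=\bigl(s_\alpha(F)-F\bigr)T_{s_\alpha}+\mathcal G_a\bigl(F-s_\alpha(F)\bigr).$$
By the uniqueness of the basis expansion $\theta_\lambda T_w$, the coefficient of $T_{s_\alpha}$ must vanish, so $s_\alpha(F)=F$. Since the $s_\alpha$ generate $W$, we get $F\in\mathcal A^W$.
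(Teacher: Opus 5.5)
Your proof is correct. Note that the paper does not prove Fact~\ref{fact:center} at all. It quotes the equality from Rostami and remarks that only the inclusion $\mathcal A^W\subseteq Z(\mathcal H)$ is ever used, and it derives that inclusion from the cross-relation.

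What you supply is the classical Bernstein--Lusztig triangularity argument, transplanted to the two-parameter presentation \eqref{eq:cross_relation}. The inputs it needs are:
\begin{itemize}
\item the free left $\mathcal A$-basis $\{T_w\}_{w\in W}$ from Fact~\ref{fact:bernstein};
\item the fact that $\mathcal G_a(F-s_\alpha F)\in\mathcal A$, including the even-lattice point in the divisible case;
\item the fact that $\mathcal A\cong\mathbb C[X]$ is a domain;
\item faithfulness of $W$ on $X$.
\end{itemize}
You use each of these correctly. Your second step is also right: the $T_{s_\alpha}$-coefficient of the commutator is $s_\alpha(F)-F$, and uniqueness of the expansion forces it to vanish.

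Compared with the paper, your route makes the equality self-contained, using only the tools already recorded in Section~\ref{sec:notation}, which fits the paper's stated aim. The citation buys brevity and Rostami's full generality. There the real work is establishing the Bernstein presentation itself, and once that is available the center computation is essentially your argument.

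Two small simplifications are available:
\begin{itemize}
\item Order by length rather than Bruhat order, taking $w$ of maximal length with $F_w\neq0$. The triangularity induction then follows directly from the quadratic relation, since $T_sT_v$ involves only $T_v$ and $T_{sv}$. The Bruhat-order version silently needs the lifting property $v<w'<sw'\Rightarrow sv<sw'$ in the inductive step.
\item Faithfulness of $W$ on $X$ is immediate from $T=Z_G(S)$. An element of $N_G(S)(F)$ acting trivially on $X_*(S)$ acts trivially on the split torus $S$, hence centralizes it and lies in $T(F)$.
\end{itemize}
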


\subsection{Weyl characters}\label{subsec:chV}

Set
$$X'' := X + \mathbb{Z}\rho^\vee, \qquad \mathcal{A}'' := \mathbb{C}[X''],$$
with basis $\{\theta_\nu\}_{\nu\in X''}$ extending that of $\mathcal{A} \cong \mathbb{C}[X]$ and with the $W$-action $w(\theta_\nu) = \theta_{w\nu}$. By \eqref{eq:rho_pairings}, $s_a(\rho^\vee) = \rho^\vee - m_a a^\vee$ for every simple root $a$ of $\Sigma$, so $W$ preserves $X''$. The inclusion $\langle a,X\rangle\subseteq m_a\mathbb{Z}$, together with \eqref{eq:rho_pairings} and $W$-stability, then implies $\langle a, X''\rangle \subseteq m_a\mathbb{Z}$ for every $a \in \Sigma$. The dominant cone $(X'')^+$ is defined as for $X$, and $X'' = X$ whenever $\rho^\vee \in X$.

Define $\mathrm{Alt}(F) := \sum_{w\in W}(-1)^{\ell(w)}w(F)$ for $F \in \mathcal{A}''$, and let $\Sigma_m^\vee := \{m_a a^\vee : a \in \Sigma\}$. This is a reduced root system with coroots $\{a/m_a : a \in \Sigma\}$, equivalently the coroot system of the non-divisible roots in $\Phi$. The reflection corresponding to $m_a a^\vee$ is $s_a$, so its Weyl group is $W$, and $X''$ and $X$ are character lattices for $\Sigma_m^\vee$. With $\varrho := \tfrac12\sum_{a\in\Sigma^+}m_a a^\vee$, \eqref{eq:rho_pairings} shows that $\rho^\vee-\varrho$ is $W$-invariant. Thus, by \cite[Chapter VI, \S 3, no.~3, Proposition~2(i)]{Bou}, we have the following identity.

\begin{fact}[Weyl denominator formula]\label{fact:denominator}
$$\mathrm{Alt}\big(\theta_{\rho^\vee}\big) = \theta_{\rho^\vee}\prod_{a\in\Sigma^+}\big(1 - \theta_{-m_a a^\vee}\big).$$
\end{fact}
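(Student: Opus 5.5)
The plan is to reduce the statement to the classical Weyl denominator formula for the reduced root system $\Sigma_m^\vee$ with character lattice $X''$, which is Bourbaki's result, and then check that the two possible half-sums agree up to a $W$-invariant shift.

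\textbf{Step 1: the denominator for $\varrho$.} By Bourbaki, applied to $\Sigma_m^\vee$ with positive system $\{m_a a^\vee : a\in\Sigma^+\}$, Weyl group $W$ and half-sum $\varrho$, one has
\[
\mathrm{Alt}(\theta_{\varrho}) \,=\, \theta_{\varrho}\prod_{a\in\Sigma^+}\bigl(1-\theta_{-m_a a^\vee}\bigr)
\]
in the group algebra of the lattice generated by $X''$ and $\varrho$. Two compatibilities need checking first. The sign $(-1)^{\ell(w)}$ must agree with $\det(w)$ on the reflection representation; this holds because $\ell$ is the length with respect to the simple reflections $s_a = s_\alpha$, which are the simple reflections of $\Sigma_m^\vee$. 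The positive system must be the one determined by $B$; this holds because the simple roots of $\Sigma^\vee_m$ are the $m_a a^\vee$ with $a$ simple in $\Sigma$. If $\varrho\notin X''$, one works in $\mathbb{C}[X''+\mathbb{Z}\varrho]$, which contains $\mathcal{A}''$ as a $W$-stable subalgebra.

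\textbf{Step 2: shift by $\mu := \rho^\vee-\varrho$.} By the remark preceding the Fact, $\mu$ is $W$-invariant: for each simple $a$, \eqref{eq:rho_pairings} gives $\langle a/m_a,\rho^\vee\rangle = 1 = \langle a/m_a,\varrho\rangle$, so $s_a\mu=\mu$. Hence $w(\theta_\mu F)=\theta_\mu\, w(F)$ for all $w\in W$, and so
\[
\mathrm{Alt}(\theta_{\rho^\vee})=\theta_\mu\,\mathrm{Alt}(\theta_\varrho).
\]
Multiplying the identity of Step 1 by $\theta_\mu$ gives the stated formula, since $\theta_\mu\theta_\varrho=\theta_{\rho^\vee}$. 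Both sides lie in $\mathcal{A}''$ because $\rho^\vee\in X''$ and each $m_a a^\vee\in X\subseteq X''$.

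\textbf{Main obstacle.} The only real point is bookkeeping in Step 1: confirming that Bourbaki's hypotheses hold for $\Sigma_m^\vee$. That is, $\Sigma_m^\vee$ must be a reduced root system whose coroots are $a/m_a$, with $\langle a/m_a, X''\rangle\subseteq\mathbb{Z}$ so that $X''$ is a lattice between its root and weight lattices. This integrality is exactly the inclusion $\langle a,X''\rangle\subseteq m_a\mathbb{Z}$ established just above the Fact.
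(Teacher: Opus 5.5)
Your proposal is correct and follows the paper's own argument. The paper applies Bourbaki's denominator formula (Ch.~VI, \S3, no.~3, Prop.~2(i)) to the reduced system $\Sigma_m^\vee$ with half-sum $\varrho$, then shifts by $\rho^\vee-\varrho$, which is $W$-invariant by \eqref{eq:rho_pairings}; your added bookkeeping (sign convention, positive system, working in $\mathbb{C}[X''+\mathbb{Z}\varrho]$) only makes explicit what the paper leaves implicit, and your phrase that $X''$ lies ``between the root and weight lattices'' is inaccurate when the center is non-trivial but is not used by your Step~1.
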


By Fact \ref{fact:denominator}, the next fact is the Weyl character formula for $\Sigma_m^\vee$, with character lattices $X''$ and $X$. See \cite[Chapter VI, \S 3, no.~3, Propositions~1 and~2(ii)--(iii)]{Bou} for part (1) and \cite[Chapter VI, \S 3, no.~3, Remark~2, and no.~4, Proposition~3]{Bou} for part (2).

\begin{fact}[Weyl character formula]\label{fact:chV}
\begin{enumerate}
\item For $\nu \in (X'')^+$, the element
\[ \mathrm{ch}V_\nu := \frac{\mathrm{Alt}(\theta_{\nu+\rho^\vee})}{\mathrm{Alt}(\theta_{\rho^\vee})} \]
is a well-defined element of $(\mathcal{A}'')^W$, and $\{\mathrm{ch}V_\nu : \nu \in (X'')^+\}$ is a $\mathbb{C}$-basis of $(\mathcal{A}'')^W$.
\item For $y \in X^+$, the element $\mathrm{ch}V_y$ belongs to $\mathcal{A}^W$, and $\{\mathrm{ch}V_y : y \in X^+\}$ is a $\mathbb{C}$-basis of $\mathcal{A}^W$. If $F\in\mathcal{A}^W$ has the expansion $F=\sum_{\nu\in(X'')^+}c_\nu\,\mathrm{ch}V_\nu$ in the basis of (1), then $c_\nu=0$ for every $\nu\notin X^+$.
\end{enumerate}
\end{fact}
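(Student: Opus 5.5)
The statement to prove is Fact~\ref{fact:chV}: the Weyl character formula for the reduced root system $\Sigma_m^\vee$, with character lattices $X''$ and $X$. The plan is to reduce everything to the classical Weyl character formula in \cite{Bou}; the only thing to check is that the hypotheses there are satisfied.

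\textbf{Setup.} We already know the following.
\begin{itemize}
\item $\Sigma_m^\vee$ is a reduced root system with Weyl group $W$.
\item Its simple roots are the $m_a a^\vee$ for simple $a\in\Sigma$, with coroots $a/m_a$.
\item $\langle a/m_a, X''\rangle\subseteq\mathbb Z$, so $X''$ is a $W$-stable lattice lying between the root lattice and the weight lattice of $\Sigma_m^\vee$.
\end{itemize}
Dominance in $X''$ with respect to $\Delta$ is the same as dominance with respect to the coroots $a/m_a$. Fact~\ref{fact:denominator} identifies $\mathrm{Alt}(\theta_{\rho^\vee})$ with $\theta_{\rho^\vee-\varrho}\,\mathrm{Alt}(\theta_\varrho)$. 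Here $\theta_{\rho^\vee-\varrho}$ is a $W$-invariant unit in the group algebra of $X''+\mathbb Z\varrho$.

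\textbf{Part (1).} The argument proceeds in three steps.
\begin{enumerate}
\item Work in $\mathbb C[X''+\mathbb Z\varrho]$, which is an integral domain. Every anti-invariant element there is divisible by $\mathrm{Alt}(\theta_\varrho)$, and the quotient is $W$-invariant; this is Bourbaki's Proposition~2. Hence $\mathrm{Alt}(\theta_{\nu+\rho^\vee})/\mathrm{Alt}(\theta_{\rho^\vee})$ lies in the invariants.
\item Show the quotient lies in $\mathcal A''$. Multiplying numerator and denominator by $\theta_{-(\rho^\vee-\varrho)}$, the numerator becomes $\mathrm{Alt}(\theta_{\nu+\varrho})$. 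By $W$-invariance of $\rho^\vee-\varrho$, both exponents $\nu+\varrho$ and $\varrho$ shift by the same element. The quotient is then a combination of $\theta_\mu$ with $\mu\in\nu-(\text{root lattice of }\Sigma_m^\vee)\subseteq X''$.
\item For the basis statement, use the unitriangularity $\mathrm{ch}V_\nu=\theta_\nu+\sum_{\mu<\nu}(\cdot)\theta_\mu$ against the orbit-sum basis $\{\sum_{W\nu}\theta\}_{\nu\in(X'')^+}$ of $(\mathcal A'')^W$. Since $X''$ is $W$-stable, every orbit meets $(X'')^+$ exactly once, so the orbit sums do form a basis.
\end{enumerate}

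\textbf{Part (2).} For $y\in X^+$, the same argument gives exponents in $y-\mathbb Z\Sigma_m^\vee\subseteq X$. The coroots $m_aa^\vee$ lie in $X$, so $\mathrm{ch}V_y\in\mathcal A^W$.

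The orbit sums indexed by $X^+$ form a basis of $\mathcal A^W$. This follows as above because $X$ is $W$-stable; $W$ acts on $X=X_*(S)$ through $N_G(S)$. Triangularity then shows $\{\mathrm{ch}V_y\}_{y\in X^+}$ is a basis.

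For the last claim, take $F\in\mathcal A^W$. By what was just shown, $F=\sum_{y\in X^+}c'_y\,\mathrm{ch}V_y$. This is also an expansion in the basis of part (1), since $X^+\subseteq (X'')^+$. By uniqueness of coefficients, $c_\nu=0$ for every $\nu\notin X^+$.

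\textbf{Main obstacle.} The only delicate point is that $\rho^\vee\notin X''$ may differ from $\varrho$ by a non-lattice $W$-invariant shift. I would handle this uniformly by working in $\mathbb C[X''+\mathbb Z\varrho]$ and cancelling the invariant unit $\theta_{\rho^\vee-\varrho}$, exactly as in the remark cited from Bourbaki.
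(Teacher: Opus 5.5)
Your proposal is correct and follows the paper's route. The paper also reduces everything to Bourbaki's Weyl character formula for the reduced system $\Sigma_m^\vee$ on the lattices $X''$ and $X$; your Steps 1--3 unpack what that citation contains, and your argument for the last assertion of (2), via $X^+\subseteq(X'')^+$ and uniqueness of coefficients, is exactly the paper's.

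There are two harmless slips. First, $\rho^\vee\in X''$ by definition, and in fact $\rho^\vee=\varrho$: both lie in the $\mathbb{Q}$-span of the relative coroots and pair to $1$ with every simple $a/m_a$, so the detour through $\mathbb{C}[X''+\mathbb{Z}\varrho]$ is unnecessary. Second, when $G$ has a central split torus, $X''$ is not contained in the weight lattice of $\Sigma_m^\vee$, but your argument only uses $W$-stability and $\langle a/m_a,X''\rangle\subseteq\mathbb{Z}$.
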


The final assertion of (2) holds because $\{\mathrm{ch}V_y : y\in X^+\}$ is a subfamily of the basis in (1), so the two expansions of $F$ coincide.

\begin{remark}\label{rem:chV_interpretation}
Fix $x\in X^+$. When $G$ is split, $\mathrm{ch}V_x$ is the character of the irreducible $G^\vee$-representation of highest weight $x$. For general unramified $G$, it may be interpreted either as the character of the irreducible ${}^{L}G^\dagger$-representation of highest weight $x$, through the inclusion of character rings in \cite[Section 2.9 and Lemma 2.11]{GK22}, or as the normalized Frobenius-twining character of the irreducible $G^\vee$-representation of highest weight $x$ \cite[Theorem 7.4]{Hopper}. Neither interpretation is used below.
\end{remark}

\subsection{Idempotents}\label{subsec:idempotents}
Inside $\mathcal{H}_0$ we consider the two elements
$$e_K = \frac{1}{W(q)}\sum_{w\in W} T_w = \frac{1}{W(q)}\,\mathbf{1}_K, \qquad e_{\mathrm{sgn}} = \frac{1}{W(q^{-1})}\sum_{w\in W} (-1)^{\ell(w)}\,q_w^{-1}\, T_w.$$

\begin{lemma}\label{lem:idempotents}
Both $e_K$ and $e_{\mathrm{sgn}}$ are idempotents. For every $\alpha \in \Delta$, we have $T_{s_\alpha} e_K = e_K T_{s_\alpha} = q_{s_\alpha}\, e_K$ and $T_{s_\alpha} e_{\mathrm{sgn}} = e_{\mathrm{sgn}} T_{s_\alpha} = -e_{\mathrm{sgn}}$. Moreover, any $h \in \mathcal{H}_0$ satisfying $T_{s_\alpha} h = q_{s_\alpha}h$ for all $\alpha \in \Delta$ lies in $\mathbb{C}e_K$, and any $h \in \mathcal{H}_0$ satisfying $T_{s_\alpha} h = -h$ for all $\alpha \in \Delta$ lies in $\mathbb{C}e_{\mathrm{sgn}}$.
\end{lemma}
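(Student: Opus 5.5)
The whole argument takes place inside the finite Hecke algebra $\mathcal{H}_0$, which has basis $\{T_w\}_{w\in W}$. Only the Iwahori--Matsumoto relations and the multiplicativity $q_{uv}=q_uq_v$ for length-additive products are needed. First I will prove the eigen-relations, then deduce idempotence from them, and finally prove the uniqueness statements by a coefficient comparison.

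\emph{Eigen-relations for $e_K$.} Fix $\alpha\in\Delta$ and set $s=s_\alpha$. I split $W=\{w:\ell(sw)>\ell(w)\}\sqcup\{w:\ell(sw)<\ell(w)\}$ and pair $w$ with $sw$. For $\ell(sw)>\ell(w)$ I have $T_sT_w=T_{sw}$. Using the quadratic relation,
\[
T_sT_{sw}=T_s^2T_w=(q_s-1)T_{sw}+q_sT_w.
\]
Hence $T_s(T_w+T_{sw})=q_s(T_w+T_{sw})$. Summing over the pairs gives $T_se_K=q_se_K$. The right-hand version $e_KT_s=q_se_K$ follows in the same way, pairing $w$ with $ws$.

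\emph{Eigen-relations for $e_{\mathrm{sgn}}$.} For a pair with $\ell(sw)=\ell(w)+1$, the contribution to $e_{\mathrm{sgn}}$ is proportional to
\[
x:=(-1)^{\ell(w)}q_w^{-1}\big(T_w-q_s^{-1}T_{sw}\big),
\]
since $q_{sw}=q_sq_w$. Then
\[
T_sx=(-1)^{\ell(w)}q_w^{-1}\big(T_{sw}-q_s^{-1}(q_s-1)T_{sw}-T_w\big)=(-1)^{\ell(w)}q_w^{-1}\big(q_s^{-1}T_{sw}-T_w\big)=-x.
\]
Summing gives $T_se_{\mathrm{sgn}}=-e_{\mathrm{sgn}}$, and the right-hand version is symmetric.

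\emph{Idempotence.} Write $w=s_1\cdots s_r$ as a reduced word. Then $T_w=T_{s_1}\cdots T_{s_r}$, so the eigen-relations give $T_we_K=q_we_K$ and $T_we_{\mathrm{sgn}}=(-1)^{\ell(w)}e_{\mathrm{sgn}}$. Consequently
\[
e_K^2=\frac{1}{W(q)}\sum_w q_we_K=e_K,\qquad e_{\mathrm{sgn}}^2=\frac{1}{W(q^{-1})}\sum_w(-1)^{\ell(w)}q_w^{-1}(-1)^{\ell(w)}e_{\mathrm{sgn}}=e_{\mathrm{sgn}}.
\]

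\emph{Uniqueness.} Let $h=\sum_w c_wT_w$ satisfy $T_sh=q_sh$ for all simple $s$. For $w$ with $\ell(sw)>\ell(w)$, compare the coefficients of $T_{sw}$ on both sides of $T_sh=q_sh$. The only contributions to $T_{sw}$ in $T_sh$ come from $T_sT_w=T_{sw}$ and from $T_sT_{sw}=(q_s-1)T_{sw}+q_sT_w$. This gives
\[
c_w+(q_s-1)c_{sw}=q_sc_{sw},
\]
so $c_w=c_{sw}$. Every $w$ is connected to $1$ by a chain of left multiplications by simple reflections that raise length, so $c_w=c_1$ for all $w$, and $h\in\mathbb{C}e_K$. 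In the sign case the same comparison gives $c_w+(q_s-1)c_{sw}=-c_{sw}$, so $c_{sw}=-q_s^{-1}c_w$. Inductively this yields $c_w=(-1)^{\ell(w)}q_w^{-1}c_1$, using $q_{sw}=q_sq_w$, and hence $h\in\mathbb{C}e_{\mathrm{sgn}}$.

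The only delicate point is keeping the coefficient bookkeeping straight in the uniqueness step, namely checking that $T_{sw}$ receives contributions only from $T_w$ and $T_{sw}$. This holds because $T_s$ maps $\mathrm{span}\{T_w,T_{sw}\}$ into itself.
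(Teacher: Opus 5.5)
Your proof is correct. The eigen-relations and the idempotence step match the paper. You pair $w$ with $sw$ (resp.\ $ws$) and apply the quadratic relation to $T_w+T_{sw}$ and to $T_w-q_s^{-1}T_{sw}$; the paper writes these as $(1+T_s)T_w$ and $(1-q_s^{-1}T_s)T_w$. Both then iterate along reduced words to get $T_we_K=q_we_K$ and $T_we_{\mathrm{sgn}}=(-1)^{\ell(w)}e_{\mathrm{sgn}}$.

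The uniqueness step is where you take a different route. You expand $h=\sum_w c_wT_w$ and compare $T_{sw}$-coefficients to obtain $c_{sw}=c_w$, respectively $c_{sw}=-q_s^{-1}c_w$. You then propagate these along reduced words. The paper avoids coordinates. The hypothesis gives $T_wh=q_wh$ for all $w\in W$, hence $e_Kh=h$. The right-hand relation $e_KT_w=q_we_K$ gives $e_K\mathcal H_0=\mathbb Ce_K$, so $h=e_Kh\in\mathbb Ce_K$, and the sign case is identical.

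The paper's argument is shorter and works verbatim for the idempotent of any one-dimensional character of $\mathcal H_0$, but it relies on both the left and right eigen-relations. Yours needs only the Iwahori--Matsumoto relations and the linear independence of the $T_w$. As a by-product, it derives the coefficients $(-1)^{\ell(w)}q_w^{-1}$ of $e_{\mathrm{sgn}}$ rather than merely verifying them.
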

\begin{proof}
Fix $\alpha \in \Delta$ and write $s = s_\alpha$. Partition $W$ into pairs $\{w, sw\}$ with $\ell(sw) = \ell(w)+1$. Such a pair contributes $(1+T_s)T_w$ to $W(q)\,e_K$ and $(-1)^{\ell(w)}q_w^{-1}(1 - q_s^{-1}T_s)T_w$ to $W(q^{-1})\,e_{\mathrm{sgn}}$. Then the quadratic relation implies $T_s(1+T_s) = q_s(1+T_s)$ and $T_s(1 - q_s^{-1}T_s) = -(1 - q_s^{-1}T_s)$. Hence $T_s e_K = q_s\,e_K$ and $T_s e_{\mathrm{sgn}} = -e_{\mathrm{sgn}}$. The partition into pairs $\{w, ws\}$ proves the right-hand identities. Moreover, iteration shows that $e_K T_w = q_w\, e_K$ and $e_{\mathrm{sgn}} T_w = (-1)^{\ell(w)} e_{\mathrm{sgn}}$ for all $w \in W$, and the chosen normalizations then imply $e_K^2 = e_K$ and $e_{\mathrm{sgn}}^2 = e_{\mathrm{sgn}}$.

For the last statement, if $T_{s_\alpha}h = q_{s_\alpha}h$ for all $\alpha\in\Delta$, then $T_w h = q_wh$ for all $w \in W$, so $h = e_K h \in e_K \mathcal{H}_0 = \mathbb{C}e_K$. The sign case is identical.
\end{proof}

\subsection{The Iwahori--Matsumoto involution}\label{subsec:IM}

The next lemma constructs the Iwahori--Matsumoto involution in our unequal-parameter setting, cf.\ \cite[Section 2.2]{Jan}.

\begin{lemma}\label{lem:IM}
There is a unique algebra automorphism $\iota$ of $\mathcal{H}$ with
$$\iota(T_{s_\alpha}) = (q_{s_\alpha}-1) - T_{s_\alpha} = -q_{s_\alpha}\,T_{s_\alpha}^{-1} \quad (\alpha \in \Delta), \qquad \iota(\theta_\lambda) = \theta_{-\lambda} \quad (\lambda \in X).$$
It is an involution and satisfies $\iota(e_K) = e_{\mathrm{sgn}}$ and $\iota(e_{\mathrm{sgn}}) = e_K$.
\end{lemma}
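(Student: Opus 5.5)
We construct $\iota$ from the Bernstein presentation (Fact \ref{fact:bernstein}): we specify it on the generators of $\mathcal{H}_0$ and of $\mathcal{A}$, then check the defining relations. Put $\tau_\alpha := (q_{s_\alpha}-1) - T_{s_\alpha}$ for $\alpha\in\Delta$. The second formula follows from the quadratic relation: $T_s^{-1} = q_s^{-1}\bigl(T_s - (q_s-1)\bigr)$, so $-q_sT_s^{-1} = (q_s-1) - T_s$.

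\textbf{The relations of $\mathcal{H}_0$ and $\mathcal{A}$.} The quadratic relation holds for $\tau_\alpha$: since $T_s$ has eigenvalues $q_s$ and $-1$, the element $\tau_\alpha$ has eigenvalues $-1$ and $q_s$, so it satisfies the same quadratic $(x-q_s)(x+1)=0$. The braid relations hold because $\tau_\alpha = -q_{s_\alpha}T_{s_\alpha}^{-1}$, and the $T_s^{-1}$ satisfy the braid relations (invert both sides of the braid relation for the $T_s$). The scalars $-q_{s_\alpha}$ match on both sides because $q_{s_\alpha}$ is constant on conjugate simple reflections, and the two sides of a braid relation contain the same multiset of simple reflections. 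On $\mathcal{A}\cong\mathbb{C}[X]$, the map $\theta_\lambda\mapsto\theta_{-\lambda}$ is clearly an algebra automorphism.

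\textbf{The cross-relation (main obstacle).} We must show
$$\tau_\alpha\,\iota(F) = s_\alpha(\iota F)\,\tau_\alpha + \iota(\mathcal{G}_a)\bigl(\iota F - s_\alpha \iota F\bigr),$$
where $\iota$ acts on $\mathcal{G}_a$ through $\theta\mapsto\theta^{-1}$; here $\iota$ commutes with $s_\alpha$. Substitute $\tau_\alpha = (q_s-1)-T_s$ and apply the original cross-relation to $T_s\,\iota F$. After cancelling the $s_\alpha(\iota F)T_s$ terms, and writing $G := \iota F$, the identity reduces to
$$(q_s-1)(G - s G) - \mathcal{G}_a(G - sG) = \iota(\mathcal{G}_a)(G - sG).$$
So it suffices to prove $\mathcal{G}_a + \iota(\mathcal{G}_a) = q_a-1$, with $q_s = q_a$. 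Write $x=\theta_{a^\vee}$. Then
$$\iota(\mathcal{G}_a) = \frac{q_a-1+(c_1-c_2)x^{-1}}{1-x^{-2}} = -\frac{(q_a-1)x^2+(c_1-c_2)x}{1-x^2}.$$
Adding this to $\mathcal{G}_a$ gives $\bigl((q_a-1)(1-x^2)\bigr)/(1-x^2) = q_a-1$, as required. We will take care that these rational expressions are legitimate: both sides are multiplied by $G-sG$, which is divisible by $1-x^2$ in the divisible case and by $1-x$ otherwise; in the non-divisible case $c_1-c_2=q_a-1$, so the fraction simplifies to $(q_a-1)/(1-x)$. This gives a homomorphism $\iota:\mathcal{H}\to\mathcal{H}$.

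\textbf{Involution, uniqueness, and the idempotents.} On generators, $\iota^2(T_s) = (q_s-1)-\bigl((q_s-1)-T_s\bigr) = T_s$ and $\iota^2(\theta_\lambda)=\theta_\lambda$. Hence $\iota^2=\mathrm{id}$, so $\iota$ is bijective. Uniqueness holds because $\mathcal{H}$ is generated by $\mathcal{H}_0$ and $\mathcal{A}$, and $\mathcal{H}_0$ is generated by the $T_{s_\alpha}$.

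For the idempotents, $\iota(e_K)$ is an idempotent in $\mathcal{H}_0$. From Lemma \ref{lem:idempotents}, $T_se_K = q_se_K$. Applying $\iota$ gives $\bigl((q_s-1)-T_s\bigr)\iota(e_K) = q_s\,\iota(e_K)$, that is, $T_s\,\iota(e_K) = -\iota(e_K)$ for all $s$. By the uniqueness part of Lemma \ref{lem:idempotents}, $\iota(e_K)=c\,e_{\mathrm{sgn}}$. Since $\iota(e_K)$ is a nonzero idempotent ($\iota$ is injective), $c^2=c$ forces $c=1$. Applying the involution $\iota$ then gives $\iota(e_{\mathrm{sgn}})=e_K$.
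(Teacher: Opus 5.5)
Your proof is correct and follows the paper's argument step for step: you check the quadratic, braid, and cross-relations through the Bernstein presentation, reduce the cross-relation to $\mathcal{G}_a + s_\alpha(\mathcal{G}_a) = q_a-1$, then use $\iota^2=\mathrm{id}$ on generators and Lemma~\ref{lem:idempotents} to identify $\iota(e_K)$. One small correction in the braid step: when the braid length is odd the two sides do not contain the same multiset of simple reflections (e.g.\ $sts$ versus $tst$), and the scalars agree only because $s$ and $t$ are then conjugate; the paper handles this by treating the even and odd cases separately.
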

\begin{proof}
By Fact \ref{fact:bernstein}, it suffices to check that the assignments preserve the defining relations. The braid relations are preserved under $T_{s} \mapsto -q_sT_{s}^{-1}$. Indeed, the inverses satisfy them, and the accumulated scalars on the two sides agree, since each generator occurs equally often when the braid length is even, while for odd braid length the two reflections are conjugate in $W$ and their parameters coincide. For the quadratic relation, with $q_s = q_{s_\alpha}$,
$$\big((q_s-1) - T_s\big)^2 = (q_s-1)^2 - 2(q_s-1)T_s + (q_s-1)T_s + q_s = (q_s-1)\big((q_s-1) - T_s\big) + q_s.$$
The assignment $\theta_\lambda \mapsto \theta_{-\lambda}$ induces the inversion automorphism of $\mathcal{A} \cong \mathbb{C}[X]$, and $s_\alpha(\bar{F}) = \overline{s_\alpha(F)}$. For the cross-relation \eqref{eq:cross_relation}, writing $\bar{F}$ for $\iota(F)$ and $a = \tilde\alpha$,
\begin{align*}
\iota(T_{s_\alpha})\bar{F} - s_\alpha(\bar{F})\,\iota(T_{s_\alpha})
&= \big((q_a-1) - T_{s_\alpha}\big)\bar{F} - s_\alpha(\bar{F})\big((q_a-1) - T_{s_\alpha}\big) \\
&= (q_a-1)\big(\bar{F} - s_\alpha(\bar{F})\big) - \mathcal{G}_a\cdot\big(\bar{F} - s_\alpha(\bar{F})\big)
= s_\alpha\big(\mathcal{G}_a\big)\cdot\big(\bar{F} - s_\alpha(\bar{F})\big),
\end{align*}
using the identity $\mathcal{G}_a + s_\alpha(\mathcal{G}_a) = q_a - 1$, which is verified below after putting the terms over a common denominator, with $c_i:=c_i(a)$:
$$\mathcal{G}_a + s_\alpha(\mathcal{G}_a) = \frac{q_a - 1 + (c_1-c_2)\theta_{a^\vee}}{1-\theta_{2a^\vee}} - \frac{\big(q_a - 1\big)\theta_{2a^\vee} + (c_1-c_2)\theta_{a^\vee}}{1-\theta_{2a^\vee}} = q_a - 1.$$
Since $s_\alpha(\mathcal{G}_a)$ is $\mathcal{G}_a$ with $\theta_{a^\vee}$ replaced by $\iota(\theta_{a^\vee}) = \theta_{-a^\vee}$, this is the cross-relation transported by $\iota$. On the generators, $\iota^2 = \mathrm{id}$, so $\iota$ is an involution.

Since $\iota$ is an algebra map, $\iota(e_K)$ is a non-zero idempotent of $\mathcal{H}_0$, and for every $\alpha \in \Delta$,
$$T_{s_\alpha}\,\iota(e_K) = \iota\big(\iota(T_{s_\alpha})\, e_K\big) = \iota\big(((q_a-1) - T_{s_\alpha})e_K\big) = \iota(-e_K) = -\iota(e_K).$$
By Lemma \ref{lem:idempotents}, $\iota(e_K) \in \mathbb{C}e_{\mathrm{sgn}}$, and since it is a non-zero idempotent it equals $e_{\mathrm{sgn}}$. Applying $\iota$ again shows that $\iota(e_{\mathrm{sgn}}) = e_K$.
\end{proof}

\subsection{Whittaker characters and Gelfand--Graev representations}
Fix a $\sigma$-equivariant Chevalley--Steinberg system for $G_{\mathcal O_E}:=G_{\mathcal O}\otimes_{\mathcal O}\mathcal O_E$ \cite[Definition 2.9.15]{KP}. For each non-divisible $\beta$, let $E_\beta/F$ be the corresponding root field, an unramified subextension of $E/F$. The associated relative-root coordinate \cite[Construction 2.9.16]{KP} is an isomorphism of additive groups
$$x_{[\beta]}:(E_\beta,+)\xrightarrow{\ \sim\ }(U_\beta/U_{2\beta})(F).$$
By \cite[Lemma 9.6.3]{KP}, the normalization from the integral model identifies the hyperspecial filtrations as
$$x_{[\beta]}(\mathcal O_{E_\beta})=\operatorname{im}(U_{\beta,0}),\qquad x_{[\beta]}(\mathfrak p_{E_\beta})=\operatorname{im}(U_{\beta,0+}),$$
and the $\beta$-weight action of $T$ gives
\begin{equation}\label{eq:root_coordinate_shift}
t_\lambda x_{[\beta]}(c)t_\lambda^{-1}=x_{[\beta]}\bigl(\varpi^{-\langle\beta,\lambda\rangle}c\bigr),\qquad c\in E_\beta,\ \lambda\in X.
\end{equation}

In view of the isomorphism of $F$-groups
$$\overline U/[\overline U,\overline U]\cong\prod_{\alpha\in\Delta}U_{-\alpha}/U_{-2\alpha},$$
a smooth character $\tau : \overline{U} \to \mathbb{C}^\times$ is determined by the characters
$$\tau_\alpha := \tau \circ x_{[-\alpha]} : E_\alpha=E_{-\alpha} \longrightarrow \mathbb{C}^\times, \qquad \alpha \in \Delta.$$
We say that $\tau$ has \emph{conductor $\mathfrak p$} if it is trivial on $\overline U(\mathfrak p)$ and every $\tau_\alpha$ is non-trivial on $\mathcal O_{E_\alpha}$. It has \emph{conductor $\mathcal O$} if it is trivial on $\overline U(\mathcal O)$ and every $\tau_\alpha$ is non-trivial on $\mathfrak p_{E_\alpha}^{-1}$. The values of $\tau$ are roots of unity, so the complex conjugate $\overline{\tau}$ equals $\tau^{-1}$ and has the same conductor.

For a smooth character $\tau$ of $\overline{U}$, the smooth induction $\mathrm{Ind}_{\overline{U}}^{G}\,\tau$ is the space of functions $f : G \to \mathbb{C}$ with
$$f(ug) = \tau(u)\,f(g) \qquad (u \in \overline{U},\ g \in G)$$
that are invariant under right translation by some open compact subgroup. The group $G$ acts by right translation, $(g \cdot f)(x) = f(xg)$, and $\mathrm{ind}_{\overline{U}}^{G}\,\tau$ is the subrepresentation of functions with compact support modulo $\overline{U}$. For a character $\tau$ of either conductor, the associated \emph{Gelfand--Graev representation} is the compact induction $\mathrm{ind}_{\overline{U}}^{G}\,\tau$. We denote characters of conductor $\mathfrak p$ and $\mathcal O$ by $\psi$ and $\omega$, respectively.

Gurevich--Karasiewicz prove the assertions of the next lemma for conductor $\mathfrak p$ \cite[Lemmas 5.1 and 5.2]{GK22}. We include a proof that treats both conductors uniformly.

\begin{lemma}\label{lem:whittaker_support}
Let $\tau$ be a character of $\overline U$ of conductor $\mathcal O$ or $\mathfrak p$, and put
$$X(\tau):=\begin{cases}X^+&\text{if $\tau$ has conductor $\mathcal O$},\\ X^{++}&\text{if $\tau$ has conductor $\mathfrak p$}\end{cases}.$$
\begin{enumerate}
\item Every $f\in(\mathrm{Ind}_{\overline U}^G\tau)^K$ satisfies $f(t_y)=0$ unless $y\in X(\tau)$.
\item For $y\in X(\tau)$, there is a well-defined function $\Phi^\tau_y\in(\mathrm{ind}_{\overline U}^G\tau)^K$ supported on $\overline U t_yK$ and determined by
$$\Phi^\tau_y(u t_yk)=\delta_{\overline B}(t_y)^{1/2}\tau(u)\qquad(u\in\overline U,\ k\in K).$$
The set $\{\Phi^\tau_y\mid y\in X(\tau)\}$ is a basis of $(\mathrm{ind}_{\overline U}^G\tau)^K$.
\end{enumerate}
\end{lemma}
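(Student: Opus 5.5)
The plan is the standard Whittaker-support argument. It rests on the Iwasawa decomposition $G=\bigsqcup_{y\in X}\overline U t_yK$ and on conjugating root subgroups by $t_y$ using \eqref{eq:root_coordinate_shift}.

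For part (1), take $f\in(\mathrm{Ind}_{\overline U}^G\tau)^K$ and $y\in X$, and fix $\alpha\in\Delta$. For $c\in\mathcal O_{E_\alpha}$, choose $u\in U_{-\alpha,0}\subseteq K$ lifting $x_{[-\alpha]}(c)$. Then
$$f(t_y)=f(t_yu)=f\bigl((t_yut_y^{-1})t_y\bigr)=\tau(t_yut_y^{-1})f(t_y).$$
By \eqref{eq:root_coordinate_shift}, $t_yut_y^{-1}$ has $-\alpha$ coordinate $\varpi^{\langle\alpha,y\rangle}c$, so $\tau(t_yut_y^{-1})=\tau_\alpha(\varpi^{\langle\alpha,y\rangle}c)$. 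To justify this, note that $\tau$ factors through $\overline U/[\overline U,\overline U]$, and the $U_{-2\alpha}$-component is killed there. So if $f(t_y)\ne0$, then $\tau_\alpha$ is trivial on $\varpi^{\langle\alpha,y\rangle}\mathcal O_{E_\alpha}$.

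\begin{itemize}
\item For conductor $\mathcal O$, $\tau_\alpha$ is non-trivial on $\mathfrak p^{-1}_{E_\alpha}$, which forces $\langle\alpha,y\rangle\ge0$.
\item For conductor $\mathfrak p$, $\tau_\alpha$ is non-trivial on $\mathcal O_{E_\alpha}$, which forces $\langle\alpha,y\rangle\ge1$.
\end{itemize}
Since $E_\alpha/F$ is unramified, $\varpi$ is also a uniformizer of $E_\alpha$, so these valuation comparisons are correct. Hence $f(t_y)=0$ unless $y\in X(\tau)$.

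For part (2), $\Phi^\tau_y$ is well defined exactly when $ut_yk=t_y$ forces $\tau(u)=1$. Such a $u$ equals $t_yk^{-1}t_y^{-1}$, so $u\in\overline U\cap t_yKt_y^{-1}$. The key step is to show that for $y\in X(\tau)$ this intersection lies in $\ker\tau$. I would use the direct-spanning factorization: $\overline U\cap t_yKt_y^{-1}=t_y\overline U(\mathcal O)t_y^{-1}$. This holds because $\overline U\cap K=\overline U(\mathcal O)$, so $\overline U\cap t_yKt_y^{-1}=t_y(\overline U\cap K)t_y^{-1}$, using $t_y^{-1}\overline Ut_y=\overline U$. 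Then $t_y\overline U(\mathcal O)t_y^{-1}=\prod_\beta t_yU_{-\beta,0}t_y^{-1}$, and its image in each simple $-\alpha$ coordinate is $\varpi^{\langle\alpha,y\rangle}\mathcal O_{E_\alpha}$.

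Since $\tau$ factors through the abelianization, $\tau$ on this group is $\prod_\alpha\tau_\alpha$ evaluated on the simple coordinates. The non-simple factors go to the commutator subgroup, or I would argue directly that $\tau$ is trivial on them. This triviality comes from the defining conditions: $\tau$ is trivial on $\overline U(\mathcal O)$ (respectively $\overline U(\mathfrak p)$), and $\langle\alpha,y\rangle\ge0$ (respectively $\ge1$). So the remaining task is to check that $\tau_\alpha$ is trivial on $\mathcal O_{E_\alpha}$ (respectively $\mathfrak p_{E_\alpha}$), which follows from $\tau$ being trivial on $\overline U(\mathcal O)$ (respectively $\overline U(\mathfrak p)$) together with $x_{[-\alpha]}(\mathcal O)=\operatorname{im}U_{-\alpha,0}$ (respectively $\operatorname{im}U_{-\alpha,0+}$).

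Smoothness and $K$-invariance of $\Phi^\tau_y$ are immediate, and its support is compact modulo $\overline U$. For the basis claim:
\begin{itemize}
\item Any $f$ in the compact induction is determined by its values $f(t_y)$, by Iwasawa and $K$-invariance.
\item Compact support modulo $\overline U$ means only finitely many $y$ have $f(t_y)\ne0$.
\item By part (1), these $y$ all lie in $X(\tau)$.
\end{itemize}
Hence $f=\sum_y\delta_{\overline B}(t_y)^{-1/2}f(t_y)\Phi^\tau_y$. Linear independence follows from the disjointness of the supports.

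The main obstacle I expect is the well-definedness step: controlling the non-abelian structure of $t_y\overline U(\mathcal O)t_y^{-1}$ and the multipliable-root coordinates so as to reduce cleanly to the simple coordinates.
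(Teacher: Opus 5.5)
Your proposal is correct and follows essentially the same route as the paper. Part (1) conjugates $U_{-\alpha,0}\subseteq K$ by $t_y$ using the coordinate shift. Part (2) uses $\overline U\cap t_yKt_y^{-1}=t_y\overline U(\mathcal O)t_y^{-1}$ and reduces $\tau$ to the simple-root quotients $U_{-\alpha}/U_{-2\alpha}$ through the abelianization, after which Iwasawa, compact support and disjoint supports give the basis; your derivation of that intersection identity from $t_y^{-1}\overline U t_y=\overline U$ is a slightly more elementary substitute for the paper's appeal to direct spanning.
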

\begin{proof}
For (1), first suppose that $\tau$ has conductor $\mathcal O$ and that $\langle\alpha,y\rangle\leq-1$ for some $\alpha\in\Delta$. For $u\in U_{-\alpha,0}\subseteq K$, we have
$$f(t_y)=f(t_yu)=\tau(t_yut_y^{-1})f(t_y).$$
As $u$ varies, its image in $U_{-\alpha}/U_{-2\alpha}$ runs through $x_{[-\alpha]}(\mathcal O_{E_\alpha})$. By \eqref{eq:root_coordinate_shift}, the image of $t_yut_y^{-1}$ runs through $x_{[-\alpha]}(\mathfrak p_{E_\alpha}^{\langle\alpha,y\rangle})$, which contains $x_{[-\alpha]}(\mathfrak p_{E_\alpha}^{-1})$. Since $\tau_\alpha$ is non-trivial on $\mathfrak p_{E_\alpha}^{-1}$, some $u$ satisfies $\tau(t_yut_y^{-1})\ne1$, and hence $f(t_y)=0$. If $\tau$ has conductor $\mathfrak p$ and $\langle\alpha,y\rangle\leq0$, the same argument applies because the coordinate range contains $\mathcal O_{E_\alpha}$, on which $\tau_\alpha$ is non-trivial. This proves (1).

For $y\in X(\tau)$, direct spanning provides
$$\overline U\cap t_yKt_y^{-1}=t_y\overline U(\mathcal O)t_y^{-1}.$$
By \eqref{eq:root_coordinate_shift}, the image of this group in each simple-root quotient $U_{-\alpha}/U_{-2\alpha}$ lies in $x_{[-\alpha]}(\mathcal O_{E_\alpha})$ when $\tau$ has conductor $\mathcal O$, and in $x_{[-\alpha]}(\mathfrak p_{E_\alpha})$ when $\tau$ has conductor $\mathfrak p$. Since $\tau$ factors through the product of these quotients and is trivial on the indicated lattices, it is trivial on $\overline U\cap t_yKt_y^{-1}$. Thus $\Phi^\tau_y$ is well defined. By the Iwasawa decomposition, any $f\in(\mathrm{ind}_{\overline U}^G\tau)^K$ is determined by the values $f(t_y)$. Compactness of the support modulo $\overline U$ leaves only finitely many non-zero values. Finally, the functions $\Phi^\tau_y$ are linearly independent because their supports are disjoint.
\end{proof}

\section{Iwahori Invariants and Two Free Modules}\label{sec:iwahori_fixed}

This section assembles the Iwahori-fixed inputs of the paper. Regarding the following fact, the admissible case goes back to Borel \cite[Theorem 4.10 and Corollaries 4.11--4.12]{Bo}. For unramified $G$, $(I,\mathbf 1)$ is a type for the unramified principal-series Bernstein component (see \cite[Proposition 3.3.1]{Haines12}). Bushnell and Kutzko \cite[Theorem 4.3(ii)--(iii)]{BK} then give the equivalence and decomposition for arbitrary smooth representations.

\begin{fact}\label{fact:borel}
The pair $(I,\mathbf 1)$ is a type for a single Bernstein component, the block of the unramified principal series. Let $\mathcal R_I(G)$ be this Bernstein direct summand of the category of smooth representations. Equivalently, it is the category of smooth representations generated by their $I$-fixed vectors. The functor
$$V\longmapsto V^I$$
is an equivalence from $\mathcal R_I(G)$ to the category of left $\mathcal H$-modules. In particular, it gives a bijection between irreducible smooth representations with non-zero $I$-fixed vectors and simple left $\mathcal H$-modules.

Let $p_I$ denote the Bernstein projection onto this component. For every smooth representation $\pi$ and every $V\in\mathcal R_I(G)$, Bernstein orthogonality and the type equivalence identify
$$\operatorname{Hom}_G(\pi,V)=\operatorname{Hom}_G(p_I\pi,V)\cong\operatorname{Hom}_{\mathcal H}\bigl((p_I\pi)^I,V^I\bigr)=\operatorname{Hom}_{\mathcal H}(\pi^I,V^I).$$
\end{fact}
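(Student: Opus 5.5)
The statement is a \emph{Fact} assembled from the literature, so the plan is to reduce each assertion to a cited result and check that our conventions match those sources.

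\textbf{The type.} Haines \cite[Proposition 3.3.1]{Haines12} shows that for unramified $G$ the pair $(I,\mathbf 1)$ is a type for the Bernstein component $[T,\mathbf 1]_G$ of unramified principal series. We need that $T(\mathcal O)$ is the Iwahori of $T$ and that $I$ has an Iwahori factorization with respect to $B$. Both are recorded in Section \ref{sec:notation}, and they are exactly the hypotheses of the cover-of-a-type criterion.

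\textbf{The equivalence.} Given the type, Bushnell--Kutzko \cite[Theorem 4.3(ii)--(iii)]{BK} yield three things:
\begin{enumerate}[(i)]
\item the full subcategory $\mathcal R_I(G)$ of representations generated by $I$-fixed vectors is a direct factor of the smooth category, namely the single Bernstein block;
\item $V\mapsto V^I$ is an equivalence of $\mathcal R_I(G)$ with $\mathcal H(G,I)$-modules;
\item the inverse is $M\mapsto \mathrm{ind}_I^G\mathbf 1\otimes_{\mathcal H}M$.
\end{enumerate}
The module conventions must be aligned. Bushnell--Kutzko use $\mathcal H(G,\rho)$ for $\rho=\mathbf 1$, acting on $V^I$ by the convolution action $\varphi\cdot v=\int\varphi(h)hv\,dh$. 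This is the left $\mathcal H$-module structure fixed in the Convolution subsection, so no anti-involution is needed. The bijection on irreducibles is immediate from the equivalence: any irreducible $\pi$ with $\pi^I\ne0$ is generated by $\pi^I$, so it lies in $\mathcal R_I(G)$, and equivalences preserve simple objects.

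\textbf{The Hom identity.} Write $\pi=p_I\pi\oplus\pi'$, where $\pi'$ lies in the complementary sum of blocks. Bernstein orthogonality gives $\operatorname{Hom}_G(\pi',V)=0$ for $V\in\mathcal R_I(G)$, which is the first equality. The middle isomorphism is the equivalence applied to $p_I\pi$ and $V$, both objects of $\mathcal R_I(G)$. For the last equality, observe that $(\pi')^I=0$, since a nonzero $I$-fixed vector would generate a subrepresentation in $\mathcal R_I(G)$, contradicting the block decomposition. Hence $\pi^I=(p_I\pi)^I$ as $\mathcal H$-modules.

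The only step requiring care is the convention check in the second step, matching the left/right action and the Haar normalization $\mathrm{vol}(I)=1$ with those in \cite{BK}. The rest is a formal consequence of the cited theorems, with Borel \cite{Bo} covering the admissible case independently.
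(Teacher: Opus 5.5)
Your proposal follows the same route as the paper, which cites Haines for the type, Bushnell--Kutzko Theorem 4.3 for the block decomposition and the equivalence, and Borel for the admissible case; your derivation of the Hom identity, in particular the observation that $(\pi')^I=0$, correctly supplies what the paper leaves to ``Bernstein orthogonality and the type equivalence.'' One minor inaccuracy does not affect the argument: the Iwahori factorization and $T(\mathcal O)$ being the Iwahori of $T$ are not the full cover criterion, which also requires an invertible element of $\mathcal H$ supported on a strongly positive double coset $It_\lambda I$ (available here since every $T_w$ is invertible), but this is moot because you cite Haines for the conclusion.
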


Following Savin's \emph{tale of two Hecke algebras} \cite{Sav}, let $\mathcal{H}_{IK} := C_c(I\backslash G/K)$ be the $(\mathcal{H}, \mathcal{H}_K)$-bimodule on which $\mathcal{H}$ acts by left convolution. Then $\mathcal{H}_{IK} = \mathcal{H}e_K$. Moreover, for split groups, Savin shows that $\mathcal{H}e_K = \mathcal{A}e_K$ is free over $\mathcal{A}$ of rank one. The analogous statement holds for $e_{\mathrm{sgn}}$. We summarize these statements in the following lemma.

\begin{lemma}[Free ideals of rank one]\label{lem:HIK}
The maps $F \mapsto Fe_K$ and $F \mapsto Fe_{\mathrm{sgn}}$ are injective on $\mathcal{A}$, and
$$\mathcal{H}_{IK} = \mathcal{H}e_K = \mathcal{A}e_K, \qquad \mathcal{H}e_{\mathrm{sgn}} = \mathcal{A}e_{\mathrm{sgn}},$$
so both left ideals are free left $\mathcal{A}$-modules of rank one, generated by the respective idempotents.
\end{lemma}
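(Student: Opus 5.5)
The proof rests on the Bernstein basis (Fact~\ref{fact:bernstein}): the elements $\theta_\lambda T_w$ with $\lambda\in X$, $w\in W$ form a $\mathbb C$-basis of $\mathcal H$, so $\mathcal H=\mathcal A\mathcal H_0$ and multiplication $\mathcal A\otimes_{\mathbb C}\mathcal H_0\to\mathcal H$ is a linear isomorphism.

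\emph{Step 1: generation.} By Lemma~\ref{lem:idempotents}, $T_we_K=q_we_K$ and $T_we_{\mathrm{sgn}}=(-1)^{\ell(w)}e_{\mathrm{sgn}}$ for $w\in W$. Iterating over reduced words gives the left-hand versions: the lemma states $T_{s_\alpha}e=\pm e$ or $q_{s_\alpha}e$, and $T_w=T_{s_1}\cdots T_{s_r}$. Hence $\mathcal H_0e_K=\mathbb Ce_K$ and $\mathcal H_0e_{\mathrm{sgn}}=\mathbb Ce_{\mathrm{sgn}}$. Therefore
\[
\mathcal He_K=\mathcal A\mathcal H_0e_K=\mathcal Ae_K,\qquad \mathcal He_{\mathrm{sgn}}=\mathcal Ae_{\mathrm{sgn}}.
\]

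\emph{Step 2: freeness.} To show $F\mapsto Fe$ is injective, write $F=\sum_\lambda c_\lambda\theta_\lambda$. Then
\[
Fe_K=\frac1{W(q)}\sum_{\lambda,w}c_\lambda\,\theta_\lambda T_w .
\]
Since the $\theta_\lambda T_w$ are linearly independent, $Fe_K=0$ forces every $c_\lambda=0$. For $e_{\mathrm{sgn}}$, the coefficient of $T_1$ in $e_{\mathrm{sgn}}$ is $W(q^{-1})^{-1}\neq0$, so the same argument applies: the coefficient of $\theta_\lambda T_1$ in $Fe_{\mathrm{sgn}}$ is $c_\lambda/W(q^{-1})$. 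Alternatively, one can deduce the sign case from the $e_K$ case by applying the involution $\iota$ of Lemma~\ref{lem:IM}, which preserves $\mathcal A$ and exchanges the two idempotents.

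\emph{Step 3: $\mathcal H_{IK}=\mathcal He_K$.} Since $e_K=W(q)^{-1}\mathbf 1_K$, right convolution by $e_K$ averages over $K$ on the right. So $\mathcal He_K\subseteq C_c(I\backslash G/K)$, and any $f\in C_c(I\backslash G/K)$ satisfies $f=f*e_K$ and lies in $\mathcal H$, giving equality.

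I expect no real obstacle here. The only point needing care is that the Bernstein basis uses $\theta_\lambda T_w$ with $\mathcal A$ on the left, which is exactly the order that appears in $\mathcal Ae$, and this is supplied by Fact~\ref{fact:bernstein}.
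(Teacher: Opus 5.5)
Your proof is correct and follows essentially the same route as the paper. You get $\mathcal{H}_{IK}=\mathcal{H}e_K$ by averaging over $K$; you get $\mathcal{H}e=\mathcal{A}\mathcal{H}_0e=\mathcal{A}e$ from $\mathcal{H}_0e_K=\mathbb{C}e_K$ and $\mathcal{H}_0e_{\mathrm{sgn}}=\mathbb{C}e_{\mathrm{sgn}}$ (Lemma~\ref{lem:idempotents}); and you get injectivity by reading off the $T_e$-coordinates of $Fe_K$ and $Fe_{\mathrm{sgn}}$ in the Bernstein basis $\{\theta_\lambda T_w\}$, exactly as the paper does. Your alternative derivation of the sign case via $\iota$ is also valid and not circular, since Lemma~\ref{lem:IM} is proved without this lemma.
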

\begin{proof}
Averaging over $K$ shows $f * e_K = f$ for every $f \in \mathcal{H}_{IK}$, so $\mathcal{H}_{IK} = \mathcal{H}e_K$. By Lemma \ref{lem:idempotents}, $\mathcal{H}_0e_K = \mathbb{C}e_K$ and $\mathcal{H}_0e_{\mathrm{sgn}} = \mathbb{C}e_{\mathrm{sgn}}$, so the Bernstein decomposition $\mathcal{H} = \mathcal{A}\cdot\mathcal{H}_0$ in Fact \ref{fact:bernstein} implies $\mathcal{H}e_K = \mathcal{A}e_K$ and $\mathcal{H}e_{\mathrm{sgn}} = \mathcal{A}e_{\mathrm{sgn}}$. For injectivity, expand $Fe_K$ and $Fe_{\mathrm{sgn}}$ in the basis $\{\theta_\lambda T_w\}$ defined in Fact \ref{fact:bernstein}: for $F \in \mathcal{A}$, the element $Fe_K = W(q)^{-1}\sum_{w\in W}F\,T_w$ has $T_e$-coordinate $W(q)^{-1}F$, and $Fe_{\mathrm{sgn}}$ has $T_e$-coordinate $W(q^{-1})^{-1}F$.
\end{proof}

Applying $e_K$ to these two free modules produces $e_K\mathcal{H}e_K$ and $e_K\mathcal{H}e_{\mathrm{sgn}}$, which Sections \ref{sec:background} and \ref{sec:bbf} study. The inversion $f \mapsto \check{f}$ identifies $\mathcal{H}_{IK}$ with $(\mathrm{ind}_K^G \mathbf{1})^{I}$, the Iwahori invariants of the spherical model. It is isomorphic to $\mathcal{H}e_{K}$ as an $\mathcal{H}$-module and free over $\mathcal{A}$ with an explicit generator $e_{K} \in \mathcal{H}_{IK}$. In parallel, for split groups, Chan and Savin \cite{CS18} identify the Iwahori invariants of the Gelfand--Graev representation with $\mathcal{H}e_{\mathrm{sgn}}$ as an $\mathcal{H}$-module and exhibit an explicit $\mathcal{A}$-module generator.

The explicit construction and calculation in \cite{CS18} are generalized to all connected reductive groups in \cite{LuoGCS}. We use the main result of \cite{LuoGCS} for the opposite Borel $\overline B$. Let $\psi$ be a character of $\overline{U}$ of conductor $\mathfrak{p}$. Since $\psi$ is trivial on $\overline{U} \cap I = \overline{U}(\mathfrak{p})$, there is a unique element
$$\mathrm{ch}_{I}^{\psi} \in \big(\mathrm{ind}_{\overline{U}}^{G}\,\psi\big)^I \quad\text{supported on } \overline{U}\cdot I, \qquad \mathrm{ch}_{I}^{\psi}(u\, i) = \psi(u) \quad (u\in\overline{U},\ i\in I).$$
Since $\overline{\psi}$ has the same conductor, this also defines $\mathrm{ch}_I^{\overline{\psi}}$.

\begin{fact}\label{fact:CS}
Let $\psi$ be a character of $\overline{U}$ of conductor $\mathfrak{p}$. Then the following hold.
\begin{enumerate}
\item $T_w \cdot \mathrm{ch}_I^{\psi} = (-1)^{\ell(w)}\, \mathrm{ch}_I^{\psi}$ for all $w \in W$. Equivalently, $e_{\mathrm{sgn}}\cdot \mathrm{ch}_I^{\psi} = \mathrm{ch}_I^{\psi}$.
\item The map $F \mapsto F\cdot \mathrm{ch}_I^{\psi}$ is an isomorphism of left $\mathcal{A}$-modules from $\mathcal{A}$ onto $(\mathrm{ind}_{\overline{U}}^G \psi)^I$. Consequently, $F e_{\mathrm{sgn}} \mapsto F \cdot \mathrm{ch}_I^{\psi}$ is an isomorphism of left $\mathcal{H}$-modules from $\mathcal{H}e_{\mathrm{sgn}} = \mathcal{A}e_{\mathrm{sgn}}$ onto $(\mathrm{ind}_{\overline{U}}^G \psi)^I$.
\end{enumerate}
\end{fact}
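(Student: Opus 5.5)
The plan is to treat the two parts separately. Part (1) is a rank-one computation. Part (2) is a triangularity argument on the basis of $(\mathrm{ind}_{\overline U}^G\psi)^I$ indexed by relevant double cosets $\overline U\backslash G/I$. The final $\mathcal H$-module statement then follows formally from Lemma \ref{lem:HIK}.

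For (1), it suffices by the Iwahori--Matsumoto relations to show $T_{s}\cdot\mathrm{ch}_I^{\psi}=-\mathrm{ch}_I^{\psi}$ for $s=s_\alpha$, $\alpha\in\Delta$. Iterating along a reduced word then gives $T_w\cdot\mathrm{ch}_I^\psi=(-1)^{\ell(w)}\mathrm{ch}_I^\psi$. The equivalence with $e_{\mathrm{sgn}}\cdot\mathrm{ch}_I^\psi=\mathrm{ch}_I^\psi$ follows from the normalization of $e_{\mathrm{sgn}}$ and the characterization in Lemma \ref{lem:idempotents}.

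To compute $T_s\cdot \mathrm{ch}_I^\psi(g)=\int_{IsI}\mathrm{ch}_I^\psi(gh)\,dh$, I decompose $IsI$ (or $Is^{-1}I$, depending on the convolution convention) into $q_s$ right $I$-cosets $r\,I$. The representatives $r=x\,s$ are indexed by $x$ in a one-root quotient, namely $U_{\pm\alpha,0}/U_{\pm\alpha,0+}\cong \mathcal O_{E_\alpha}/\mathfrak p_{E_\alpha}$ on the relevant coordinate (for multipliable $\alpha$, this is a two-step quotient). The value then becomes a finite sum $\sum_r\mathrm{ch}_I^\psi(gr)$. For each $r$ one uses the rank-one Bruhat identity in the Levi of $\alpha$ to rewrite $gr$ in the form $\bar u\, t\, i$. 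The term with $x$ trivial contributes nothing at $g\in\overline U I$. The remaining terms lie in $\overline U I$ and carry the factor $\psi_\alpha$ evaluated at a unit of the residue field $\mathcal O_{E_\alpha}/\mathfrak p_{E_\alpha}$. Since $\psi_\alpha$ is non-trivial on $\mathcal O_{E_\alpha}$ and trivial on $\mathfrak p_{E_\alpha}$, the sum of $\psi_\alpha$ over the non-zero residues is $-1$.

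Two further checks are needed. First, the translated function is again supported on $\overline U I$: on $\overline U sI$ the sum is a full character sum, hence $0$. Second, the non-reduced case $2\alpha\in\Phi$ requires the Heisenberg-type quotient, where only the $U_{-\alpha}/U_{-2\alpha}$ coordinate sees $\psi$. I expect the non-trivial terms there to still produce $-1$ after summing over the $U_{-2\alpha}$ fibres. This bookkeeping, with the parameters $d_\alpha,d_{2\alpha}$, is the fiddliest part of (1).

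For (2), I first classify the relevant cosets. By Iwahori--Bruhat, $G=\bigsqcup_{\lambda\in X,\,w\in W}\overline U\,t_\lambda w\,I$. A coset supports an $I$-invariant vector exactly when $\psi$ is trivial on $\overline U\cap (t_\lambda w)I(t_\lambda w)^{-1}$, and these vectors $\mathrm{ch}_{\lambda,w}$ form a basis. Using \eqref{eq:root_coordinate_shift} as in Lemma \ref{lem:whittaker_support}, I expect that for each $\lambda$ exactly one $w$ (or, after reindexing, exactly one coset per element of $X$) is relevant. This yields a basis indexed by $X$ matching $\{\theta_\lambda\}$.

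Next I show triangularity. For $\lambda\in X^-$, $\theta_\lambda\cdot\mathrm{ch}_I^\psi$ is a positive multiple of $\mathrm{ch}_{\lambda}$ plus terms supported on cosets that are smaller in a suitable order: the dominance order on $X$, refined by Bruhat length. This comes from expanding $T_{t_\lambda}$ as a sum over $It_\lambda I/I$ and using (1) to control the $W$-part.

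For general $\lambda$ there are two options. The first is to write $\theta_\lambda=\theta_{\lambda_1}\theta_{\lambda_2}^{-1}$ and argue that the $\mathcal A$-module is torsion-free. I will instead try the second: use $\theta_{w\lambda}$ versus $\theta_\lambda$ together with the cross-relation \eqref{eq:cross_relation} and (1). Since $T_s$ acts by $-1$ on $\mathrm{ch}_I^\psi$, the cross-relation expresses $\theta_{s\lambda}\cdot\mathrm{ch}$ through $\theta_\lambda\cdot\mathrm{ch}$ and lower terms.

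I expect this triangularity for non-antidominant $\lambda$ to be the main obstacle. A unitriangular change of basis then gives both injectivity and surjectivity of $F\mapsto F\cdot\mathrm{ch}_I^\psi$.

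Finally, the map $\mathcal He_{\mathrm{sgn}}\to(\mathrm{ind}_{\overline U}^G\psi)^I$, $h\mapsto h\cdot\mathrm{ch}_I^\psi$, is $\mathcal H$-linear. It is well defined because $e_{\mathrm{sgn}}\cdot\mathrm{ch}_I^\psi=\mathrm{ch}_I^\psi$ by (1). By Lemma \ref{lem:HIK} it is identified with $F\mapsto F\cdot\mathrm{ch}_I^\psi$ on $\mathcal A$, hence it is an isomorphism.
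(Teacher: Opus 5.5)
The paper does not prove Fact~\ref{fact:CS}; it imports it from \cite{LuoGCS} (and \cite{CS18} for split groups), so your argument has to stand on its own. Part (1) goes through. The non-reduced case needs no Heisenberg bookkeeping: reduce modulo $\mathfrak p$ and use the Bruhat decomposition of the finite rank-one Levi. As $x$ runs over the non-trivial classes of $U_{\alpha,0}/U_{\alpha,0+}$, the $\overline U$-component of $xs$ modulo $I$ runs bijectively over the non-trivial classes of $U_{-\alpha,0}/U_{-\alpha,0+}$. On that group $\psi$ is a non-trivial character, so the sum is $-1$. Injectivity in (2) is also easier than you suggest. For $\lambda\in X^-$, the support of $T_{t_\lambda}\cdot\mathrm{ch}_I^\psi$ is the single coset $\overline U t_{-\lambda}I$, with value $1$ at $t_{-\lambda}$. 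So $\theta_\lambda\cdot\mathrm{ch}_I^\psi$ is exactly $q_{t_\lambda}^{-1/2}$ times that coset's basis function, and multiplying a kernel element by a deep antidominant $\theta_\nu$ reduces to this case.

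The genuine gap is surjectivity in (2). You flag it but do not close it, and your proposed route cannot close it as stated.

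First, your indexing is wrong. The coset $\overline U t_\mu wI$ is relevant if and only if $\mu\in X^+$ and $\langle\beta,\mu\rangle\geq 1$ for every simple $\beta$ with $w^{-1}\beta<0$. So a dominant $\mu$ carries $|W\mu|$ relevant cosets, one for each minimal-length representative of $W_J\backslash W$ with $J=\{\beta\in\Delta:\langle\beta,\mu\rangle=0\}$. A non-dominant $\mu$ carries none. The antidominant $\theta_\lambda$ reach only the cosets with $w=1$.

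Second, the cross-relation is purely algebraic. Applied to $\mathrm{ch}_I^\psi$ it gives $\theta_{s\lambda}\cdot\mathrm{ch}_I^\psi=-T_s\cdot(\theta_\lambda\cdot\mathrm{ch}_I^\psi)+\mathcal G_a(\theta_\lambda-\theta_{s\lambda})\cdot\mathrm{ch}_I^\psi$. This only moves the problem to the action of $T_s$ on coset functions, which is a geometric computation your plan never makes.

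That computation is the missing idea. Let $\mathrm{ch}_{\mu,w}$ be the basis function on $\overline U t_\mu wI$ with $\mathrm{ch}_{\mu,w}(t_\mu w)=1$. If $s=s_\alpha$, $\ell(ws)>\ell(w)$, and both $(\mu,w)$ and $(\mu,ws)$ are relevant, then
\[
T_s\cdot\mathrm{ch}_{\mu,w}=q_s\,\mathrm{ch}_{\mu,ws}+c\,\mathrm{ch}_{\mu,w}\qquad(c\in\mathbb C).
\]
To see this:
\begin{enumerate}
\item The inclusion $IsI\subseteq sI\cup U_{-\alpha,0}I$ from part (1) gives $wIsI\subseteq wsI\cup U_{-w\alpha,0}\,wI$. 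Since $U_{-w\alpha}\subseteq\overline U$, this confines the support to the two cosets.
\item At $t_\mu ws$, each of the $q_s$ terms equals $\psi$ of an element of $t_\mu U_{-w\alpha,0}t_\mu^{-1}$. This value is $1$, because either $w\alpha$ is not simple, or it is simple and $\langle w\alpha,\mu\rangle\geq 1$ by relevance of $(\mu,ws)$.
\end{enumerate}

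Minimal coset representatives are closed under taking prefixes. So induction on $\ell(w)$, starting from $\mathrm{ch}_{\mu,1}\in\mathcal A\cdot\mathrm{ch}_I^\psi$, puts every basis function in $\mathcal H\cdot\mathrm{ch}_I^\psi$. By (1) and Fact~\ref{fact:bernstein}, $\mathcal H\cdot\mathrm{ch}_I^\psi=\mathcal A\mathcal H_0\cdot\mathrm{ch}_I^\psi=\mathcal A\cdot\mathrm{ch}_I^\psi$. This proves surjectivity without any unitriangularity in the $\theta_\lambda$-basis, for which you never specified either the bijection with $X$ or the order.
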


For more general Bernstein blocks, Solleveld \cite{Sol25} and Solleveld--Opdam \cite{SO26} identify the Hom-space from a Bernstein progenerator to the Gelfand--Graev representation with the module induced from the Steinberg character of the finite Hecke algebra. Fact \ref{fact:CS}(2) instead exhibits the generator for the Iwahori block, which is the form the computations of this paper require.

\section{The Satake Isomorphism and Spherical Representations}\label{sec:background}

This section carries out the first computation announced in the introduction: the explicit description of $e_K\mathcal{H}e_K$ inside the free module $\mathcal{H}e_K = \mathcal{A}e_K$ identified in Lemma \ref{lem:HIK}.

The classical Satake isomorphism \cite{Sat63} establishes that the spherical Hecke algebra $\mathcal{H}_K = e_K \mathcal{H} e_K$ is isomorphic to $\mathcal{A}^W$. Knop \cite{Kno05} proves the corresponding spherical-subalgebra isomorphism for affine Hecke algebras with unequal parameters. The proof in \cite{Sav} for split groups runs through the center of $\mathcal{H}$ (Fact \ref{fact:center}). Here we record a short direct proof from the cross-relation \eqref{eq:cross_relation}, without using the center. The same projection technique recurs in Section \ref{sec:bbf}.

\begin{theorem}[Satake isomorphism] \label{thm:spherical_iso}
For $F \in \mathcal{A}$, the element $F e_K$ belongs to $e_K \mathcal{H} e_K$ if and only if $s_\alpha(F) = F$ for all simple roots $\alpha$. Consequently, the spherical Hecke algebra $\mathcal{H}_K = e_K \mathcal{H} e_K$ is isomorphic to $\mathcal{A}^W$ as a $\mathbb{C}$-algebra via the map $F \mapsto F e_K$.
\end{theorem}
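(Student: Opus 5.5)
The plan is to characterize $e_K\mathcal{H}e_K$ inside $\mathcal{A}e_K$ as the set of $Fe_K$ fixed by left multiplication by each $T_{s_\alpha}$ with eigenvalue $q_{s_\alpha}$. Then we compute that action with the cross-relation \eqref{eq:cross_relation}.

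First, $h\in\mathcal{H}e_K$ lies in $e_K\mathcal{H}e_K$ if and only if $e_Kh=h$. By the iteration in the proof of Lemma \ref{lem:idempotents}, $T_{s_\alpha}h=q_{s_\alpha}h$ for all $\alpha\in\Delta$ implies $T_wh=q_wh$ for all $w\in W$, hence $e_Kh=h$. The converse holds because $T_{s_\alpha}e_K=q_{s_\alpha}e_K$. By Lemma \ref{lem:HIK}, every element of $\mathcal{H}e_K$ is uniquely of the form $Fe_K$ with $F\in\mathcal{A}$.

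Second, we compute $T_{s_\alpha}Fe_K$ using \eqref{eq:cross_relation} and $T_{s_\alpha}e_K=q_ae_K$:
$$T_{s_\alpha}Fe_K=\Big(q_a\,s_\alpha(F)+\mathcal{G}_a\big(F-s_\alpha(F)\big)\Big)e_K.$$
By injectivity of $F\mapsto Fe_K$ (Lemma \ref{lem:HIK}), the condition $T_{s_\alpha}Fe_K=q_aFe_K$ becomes an identity in $\mathcal{A}$:
$$(\mathcal{G}_a-q_a)\big(F-s_\alpha(F)\big)=0.$$
This identity is to be read in the fraction field of the domain $\mathcal{A}\cong\mathbb{C}[X]$, and there it is legitimate, because $\mathcal{G}_a(F-s_\alpha F)\in\mathcal{A}$ and $1-\theta_{2a^\vee}\neq0$. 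The main point, and the only real obstacle, is to check that $\mathcal{G}_a-q_a\neq0$. Its numerator is
$$q_a-1+(c_1-c_2)\theta_{a^\vee}-q_a+q_a\theta_{2a^\vee}=-1+(c_1-c_2)\theta_{a^\vee}+q_a\theta_{2a^\vee}.$$
This is a nonzero Laurent polynomial: its constant term is $-1$ and $a^\vee\neq0$. Since $\mathcal{A}$ is a domain, we conclude $F=s_\alpha(F)$. Conversely, if $s_\alpha(F)=F$ for every $\alpha$, the $\mathcal{G}_a$-term vanishes and $T_{s_\alpha}Fe_K=q_aFe_K$. This proves the first assertion.

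Finally, since the $s_\alpha$ generate $W$, the image of $F\mapsto Fe_K$ restricted to $\mathcal{A}^W$ is exactly $e_K\mathcal{H}e_K$, and the map is injective by Lemma \ref{lem:HIK}. It remains to show multiplicativity. For $F_1,F_2\in\mathcal{A}^W$, the element $F_2$ is central in $\mathcal{H}$ (see the remark after Fact \ref{fact:bernstein}). Hence
$$F_1e_KF_2e_K=F_1F_2e_K^2=F_1F_2e_K.$$
Moreover $e_K=1\cdot e_K$ is the unit of $\mathcal{H}_K$. So the map is a $\mathbb{C}$-algebra isomorphism $\mathcal{A}^W\to\mathcal{H}_K$.
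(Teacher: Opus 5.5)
Your proposal is correct and follows the paper's proof essentially step for step: the characterization of $e_K\mathcal{H}e_K$ by the $T_{s_\alpha}$-eigenvalue $q_{s_\alpha}$, the cross-relation computation, and the non-vanishing of $q_a\theta_{2a^\vee}+(c_1-c_2)\theta_{a^\vee}-1$ via its constant term $-1$. The only difference is in the multiplicativity step: the paper uses $e_K(F'e_K)=F'e_K$, which follows from the first part, while you use the centrality of $\mathcal{A}^W$ noted after the Bernstein presentation; both arguments are valid.
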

\begin{proof}
By Lemma \ref{lem:HIK}, every element of $\mathcal{H} e_K$ is of the form $F e_K$ for a unique $F \in \mathcal{A}$.

By Lemma \ref{lem:idempotents}, $F e_K$ lies in $e_K \mathcal{H} e_K$ if and only if $T_{s_\alpha} (F e_K) = q_{s_\alpha} (F e_K)$ for all $\alpha \in \Delta$.

Fix $\alpha \in \Delta$ and write $s = s_\alpha$ and $a = \tilde\alpha$. The cross-relation \eqref{eq:cross_relation} and $T_{s} e_K = q_a e_K$ lead to
\begin{equation*}
T_{s} (F e_K) = q_a\, s(F)\, e_K + \mathcal{G}_a\cdot\big(F - s(F)\big)\, e_K.
\end{equation*}
Equating this to $q_a F e_K$ and using the injectivity in Lemma \ref{lem:HIK} to isolate the $\mathcal{A}$-coordinates yields
\begin{equation*}
\big(F - s(F)\big)\,\big(\mathcal{G}_a - q_a\big) = 0
\end{equation*}
in $\mathrm{Frac}(\mathcal{A})$. Clearing the denominator $1 - \theta_{2a^\vee}$ of $\mathcal{G}_a$, the second factor becomes
\begin{equation*}
q_a - 1 + \big(c_1(a) - c_2(a)\big)\theta_{a^\vee} - q_a\big(1 - \theta_{2a^\vee}\big) = q_a\theta_{2a^\vee} + \big(c_1(a)-c_2(a)\big)\theta_{a^\vee} - 1,
\end{equation*}
which is non-zero in $\mathcal{A}$ because its $\theta_0$-coefficient is $-1$. Since $\mathcal{A}$ is an integral domain, $s(F) = F$. This holds for all $\alpha \in \Delta$, so $F \in \mathcal{A}^W$. Conversely, the same computation shows that $F \in \mathcal{A}^W$ implies $T_{s_\alpha}(Fe_K) = q_{s_\alpha} F e_K$ for all $\alpha \in \Delta$, so $F \mapsto F e_K$ is a linear isomorphism $\mathcal{A}^W \xrightarrow{\sim} e_K \mathcal{H} e_K$.

For $F, F' \in \mathcal{A}^W$ we have $e_K (F' e_K) = F' e_K$, so $(F e_K)(F' e_K) = F \big(e_K F' e_K\big) = F F' e_K$, and $1 \cdot e_K = e_K$ is the unit of $\mathcal{H}_K$. Hence $F \mapsto F e_K$ is an isomorphism of $\mathbb{C}$-algebras $\mathcal{A}^W \cong \mathcal{H}_K$.
\end{proof}

\begin{remark}
Macdonald's formula for the zonal spherical function can be proved by the projection method of Section \ref{sec:bbf}. We do not pursue this here.
\end{remark}

As an application, we classify the simple $\mathcal{H}$-modules $M$ with $e_K M \neq 0$.

\begin{lemma}[The spherical module]\label{lem:spherical_unique}
Let $\chi : \mathcal{H}_K \to \mathbb{C}$ be an algebra character. Up to isomorphism there is a unique simple left $\mathcal{H}$-module $M$ with $e_K M \neq 0$ such that $\mathcal{H}_K$ acts on $e_K M$ through $\chi$. For this module, $\dim_{\mathbb{C}} e_K M = 1$. We denote it $M_G(\chi)$.
\end{lemma}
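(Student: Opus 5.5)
Existence comes from inducing the character $\chi$ from $\mathcal{H}_K$ up to $\mathcal{H}$ through the bimodule $\mathcal{H}e_K$ and passing to the unique simple quotient. Uniqueness and the dimension statement then follow from the standard idempotent correspondence for the full idempotent-truncated algebra $e_K\mathcal{H}e_K=\mathcal{H}_K$.

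\textbf{Step 1: the induced module.} Set $\mathbb{C}_\chi$ for the one-dimensional $\mathcal{H}_K$-module and form
$$N_\chi := \mathcal{H}e_K\otimes_{\mathcal{H}_K}\mathbb{C}_\chi .$$
By Lemma \ref{lem:HIK} and Theorem \ref{thm:spherical_iso}, $\mathcal{H}e_K=\mathcal{A}e_K$ is a right $\mathcal{H}_K\cong\mathcal{A}^W$-module, acting by $Fe_K\cdot F'e_K=FF'e_K$. Thus $N_\chi\cong\mathcal{A}\otimes_{\mathcal{A}^W}\mathbb{C}_\chi$, which is nonzero because $\mathcal{A}$ is finite free over $\mathcal{A}^W$; in any case $e_K\otimes1$ is nonzero, since $\chi$ extends to $\mathcal{A}$ along the integral extension $\mathcal{A}^W\subseteq\mathcal{A}$. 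Moreover
$$e_KN_\chi=e_K\mathcal{H}e_K\otimes_{\mathcal{H}_K}\mathbb{C}_\chi=\mathbb{C}_\chi,$$
which is one-dimensional, and $N_\chi$ is generated by $e_KN_\chi$.

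\textbf{Step 2: existence.} Let $R$ be the sum of all submodules $L\subseteq N_\chi$ with $e_KL=0$. Then $e_KR=0$, so $R$ is proper, and $M:=N_\chi/R$ satisfies $e_KM=\mathbb{C}_\chi\neq0$. I claim $M$ is simple. Let $M'\subseteq M$ be a nonzero submodule. Its preimage is not contained in $R$, so $e_KM'\neq0$. Since $e_KM$ is one-dimensional, $e_KM'=e_KM$, and $M'$ therefore contains the generator, so $M'=M$.

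\textbf{Step 3: uniqueness and dimension.} Let $M$ be simple with $e_KM\neq0$ and $\mathcal{H}_K$ acting on $e_KM$ through $\chi$. For any $0\neq v\in e_KM$, the $\mathcal{H}_K$-action on $v$ is by $\chi$, so $\mathcal{H}_K$ stabilizes $\mathbb{C}v$. Simplicity gives $M=\mathcal{H}v$, hence
$$e_KM=e_K\mathcal{H}e_Kv=\mathbb{C}v,$$
so $\dim e_KM=1$. The universal property yields a nonzero map $N_\chi\to M$, $h e_K\otimes1\mapsto hv$, which is surjective. Its kernel $L$ satisfies $e_KL=0$, because $e_K$ maps $e_KN_\chi$ isomorphically onto $e_KM$. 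Hence $L\subseteq R$. Simplicity of $M$ and properness of $R$ then force $L=R$, so $M\cong N_\chi/R$.

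\textbf{Main obstacle.} The only delicate point is the nonvanishing of $N_\chi$, equivalently that $\chi$ is realized at all. The cleanest route is freeness of $\mathcal{A}$ over $\mathcal{A}^W$; if that is not available in the unequal-parameter setting, it suffices to note that $\mathcal{A}$ is finite over $\mathcal{A}^W$, which holds for the invariants of a finite group. Nakayama's lemma then gives $\mathcal{A}\otimes_{\mathcal{A}^W}\mathbb{C}_\chi\neq0$.
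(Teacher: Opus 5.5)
Your argument is correct, but it takes a different route from the paper. The paper passes through smooth representations. Fact~\ref{fact:borel} identifies the simple $\mathcal{H}$-modules $M$ with $e_KM\neq0$ with the $V^I$ for irreducible $V$ with $V^K=e_KV^I\neq0$. Laumon's Proposition D.1.8 then gives the bijection $V\mapsto V^K$ onto simple $\mathcal{H}_K$-modules, and the Nullstellensatz for $\mathcal{H}_K\cong\mathcal{A}^W$ makes these one-dimensional.

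You instead run the idempotent correspondence directly on the pair $(\mathcal{H},e_K)$. This is the same algebraic mechanism that underlies Laumon's proposition, applied inside $\mathcal{H}$ rather than $C_c(G)$, and it realizes $M_G(\chi)$ as the unique simple quotient of $\mathcal{H}e_K\otimes_{\mathcal{H}_K}\mathbb{C}_\chi$.

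Your route avoids Borel and Bushnell--Kutzko entirely, gives an explicit model, and works for any algebra with an idempotent, for instance the generic algebra of Theorem~\ref{thm:bbf_generalized}. The paper's route ties $M$ to the smooth representation that Definition~\ref{def:spherical_rep} needs anyway. Its Nullstellensatz step also shows that every simple $M$ with $e_KM\neq0$ is some $M_G(\chi)$, which is the classification announced before the lemma. You would get this with one more line: $e_KM$ is then a simple module over the finitely generated commutative algebra $\mathcal{H}_K$, hence one-dimensional.

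Your ``main obstacle'' is not one, and your first justification for it is false: $\mathcal{A}$ need not be free over $\mathcal{A}^W$. For $G=\mathrm{SL}_3$ (where $\rho^\vee\in X$), $\mathcal{A}^W$ is the coordinate ring of $T^\vee/W$ for a maximal torus $T^\vee$ of $\mathrm{PGL}_3(\mathbb{C})$. This quotient is singular at the image of $\mathrm{diag}(1,\zeta,\zeta^2)$ with $\zeta=e^{2\pi i/3}$, whose stabilizer has order $3$ and contains no reflections. Since $\mathcal{A}$ is regular, it cannot even be flat over $\mathcal{A}^W$.

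Your fallbacks (lying over, Nakayama) are valid but unnecessary. The splitting $\mathcal{H}e_K=e_K\mathcal{H}e_K\oplus(1-e_K)\mathcal{H}e_K$ of right $\mathcal{H}_K$-modules, which your Step~1 computation already uses, gives $e_KN_\chi\cong\mathbb{C}_\chi\neq0$ directly.
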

\begin{proof}
By Fact \ref{fact:borel}, the simple $\mathcal{H}$-modules with $e_K M \neq 0$ are the $M = V^I$ for irreducible smooth representations $V$ with $V^K = e_KV^I \neq 0$. Applying \cite[Proposition D.1.8]{Lau96} to the compact open subgroup $K$ gives a bijection $V \mapsto V^K = e_K M$ onto the isomorphism classes of simple $\mathcal{H}_K$-modules. Since $\mathcal{H}_K \cong \mathcal{A}^W$ is a finitely generated commutative $\mathbb{C}$-algebra, the Nullstellensatz shows that its simple modules are one-dimensional, given by the characters $\chi$.
\end{proof}

\begin{definition}\label{def:spherical_rep}
Let $V_G(\chi)$ denote the irreducible smooth representation of $G$ with $V_G(\chi)^I \cong M_G(\chi)$, provided by Fact \ref{fact:borel}. The space $V_G(\chi)^K = e_K M_G(\chi)$ is one-dimensional, spanned by a vector $v_K$ on which $\mathcal{H}_K$ acts through $\chi$. We call $V_G(\chi)$ the spherical representation with Satake parameter $\chi$.
\end{definition}

The following compatibility is expressed in the notation of the dominant and anti-dominant Bernstein presentations in \cite{HP02}. We include a short proof to fix our conventions and cover the present parameters.

\begin{proposition}[Geometric anti-involution on $\mathcal{A}^W$] \label{prop:anti_involution_center}
Recall from Section \ref{sec:notation} the geometric anti-involution $h \mapsto \check{h}$ of $\mathcal{H}$, $\check{h}(g) = h(g^{-1})$. For $f \in \mathcal{A}$, let $\bar{f}$ denote the image of $f$ under $\theta_\lambda \mapsto \theta_{-\lambda}$. Then
\begin{equation*}
\check{f} = \bar{f} \qquad \text{for every } f \in \mathcal{A}^W.
\end{equation*}
\end{proposition}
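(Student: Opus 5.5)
Since $h\mapsto\check h$ is an anti-involution of $\mathcal H$ and $\mathcal A^W$ is central (hence commutative), restricting to $\mathcal A^W$ gives an algebra map $\mathcal A^W\to\mathcal H$. We also have $\check{e}_K=e_K$, since $\mathbf 1_K$ is inversion-invariant. The approach is to compare $\check f$ and $\bar f$ after multiplying by $e_K$, and then lift back to $\mathcal H$.

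\textbf{Step 1.} For $f\in\mathcal A^W$ we have $fe_K\in\mathcal H_K$ by Theorem \ref{thm:spherical_iso}. Also $\check f\,e_K=(e_Kf)^{\vee}=(fe_K)^{\vee}$ by centrality. On $\mathcal H_K=C_c(K\backslash G/K)$ the map $\vee$ sends $\mathbf 1_{Kt_\lambda K}$ to $\mathbf 1_{Kt_{-\lambda}K}=\mathbf 1_{Kt_{-w_0\lambda}K}$. It therefore suffices to show that the Satake transform intertwines $\vee$ with $f\mapsto\bar f$, i.e.\ that $(fe_K)^\vee=\bar f e_K$.

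To see this without the explicit Satake formula, I would use the inner product. For $h\in\mathcal H_K$, the value $(h*\check h')(1)$ pairs coefficients. More directly, $\vee$ on $\mathcal H$ satisfies $\check T_w=T_{w^{-1}}$. Hence, for $\lambda\in X^-$, $\check\theta_\lambda=q_{t_\lambda}^{-1/2}T_{t_{-\lambda}}$, and $t_{-\lambda}=w_0t_{-w_0\lambda}w_0$ with $-w_0\lambda\in X^-$ dominant-reversed. For $\lambda\in X^-$ one gets
\[
e_K\check\theta_\lambda e_K=e_K\theta_{w_0\lambda}'e_K ,
\]
up to matching the lengths $\ell(t_{-\lambda})=\ell(t_\lambda)$. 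Equivalently, $e_K\check\theta_\lambda e_K=e_K\theta_{-\lambda}e_K$ after using $e_KT_{w_0}=q_{w_0}e_K$ and the length-additivity $T_{w_0}T_{t_{\mu}}=T_{w_0t_\mu}$ for suitable $\mu$. Checking this length bookkeeping carefully is the first main computation.

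\textbf{Step 2.} Write $f=\sum c_\lambda\theta_\lambda$ with $f$ $W$-invariant. Then $e_Kfe_K=fe_K$ and
\[
(fe_K)^\vee=e_K\check fe_K=\sum c_\lambda\,e_K\check\theta_\lambda e_K .
\]
One needs $e_K\check\theta_\lambda e_K=e_K\theta_{-\lambda}e_K$ for all $\lambda$, not only $\lambda\in X^-$. Since $\check{}$ is anti-multiplicative and $\theta_\lambda=\theta_{\lambda_1}\theta_{\lambda_2}^{-1}$, a general $\theta_\lambda$ is awkward to handle. The way around this is $W$-invariance: group $f$ into orbit sums, and choose in each orbit the antidominant representative. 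The obstacle remains that $e_K\theta_\mu e_K$ for non-antidominant $\mu$ is not a single basis element. This is the step I expect to be hardest.

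\textbf{Alternative for Step 2.} A cleaner route is to show directly that $\check{}$ restricted to $\mathcal A$ agrees with $T_{w_0}(\cdot)T_{w_0}^{-1}$ composed with $\theta_\lambda\mapsto\theta_{-w_0\lambda}$. I would verify this on $\theta_\lambda$ with $\lambda\in X^-$ via $\check T_{t_\lambda}=T_{t_{-\lambda}}$ and the braid factorization
\[
T_{w_0}T_{t_{w_0\lambda}}=T_{t_\lambda}T_{w_0}\quad\text{(length additive)},
\]
which gives $\check\theta_\lambda=T_{w_0}^{-1}\theta_{-w_0\lambda}T_{w_0}$ up to checking the direction of conjugation. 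Both sides are anti-multiplicative versus multiplicative composed with inversion order, but $\mathcal A$ is commutative, so they extend to all of $\mathcal A$. For $f\in\mathcal A^W$ central, conjugation by $T_{w_0}$ is trivial and $\theta_\lambda\mapsto\theta_{-w_0\lambda}$ acts as $\bar{\ }$ (apply $w_0$). Hence $\check f=\bar f$.

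The main obstacle is the length-additivity and normalization bookkeeping in $\check\theta_\lambda=T_{w_0}^{-1}\theta_{-w_0\lambda}T_{w_0}$, including the factors $q_{t_\lambda}^{-1/2}$, which agree because $q_{t_{w_0\lambda}}=q_{t_\lambda}$.
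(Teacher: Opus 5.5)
Your ``Alternative for Step 2'' is correct and is exactly the paper's proof. On $X^-$ you write $\check\theta_\lambda=q_{t_\lambda}^{-1/2}T_{t_{-\lambda}}$. You then use the length-additive factorization $w_0t_{-\lambda}=t_{-w_0\lambda}w_0$ (your identity applied to $-w_0\lambda\in X^-$) to get $\check\theta_\lambda=T_{w_0}^{-1}\theta_{-w_0\lambda}T_{w_0}$. Commutativity of $\mathcal A$ extends this to all $\lambda\in X$, and for $f\in\mathcal A^W$ centrality together with $w_0(\bar f)=\bar f$ finishes the argument.

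The $e_K$-detour in Steps 1--2 is unnecessary, and you can simply drop it. Lifting back from $\mathcal H_K$ would in any case require knowing $\check f\in\mathcal A$, for example via Fact~\ref{fact:center}.
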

\begin{proof}
For $\lambda \in X^-$, we have $\theta_\lambda = q_{t_\lambda}^{-1/2} T_{t_\lambda}$, hence $\check{\theta}_\lambda = q_{t_\lambda}^{-1/2} T_{t_{-\lambda}}$. After translating Rostami's dominant-translation convention to our choice $t_\lambda=\lambda(\varpi^{-1})$, the length formula \cite[Lemma 5.2.1(iii)--(iv)]{Rostami} shows that both products in $w_0t_{-\lambda} = t_{-w_0\lambda}w_0$ are length-additive. Therefore
\begin{equation*}
T_{w_0} T_{t_{-\lambda}} = T_{t_{-w_0\lambda}} T_{w_0}.
\end{equation*}
Since $w_0(2\rho) = -2\rho$, the parameters satisfy $q_{t_{-w_0\lambda}} = q_{t_\lambda}$, and therefore
\begin{equation}\label{eq:check_theta}
\check{\theta}_\lambda = T_{w_0}^{-1}\theta_{-w_0\lambda}T_{w_0}.
\end{equation}
For general $\lambda = \lambda_1 - \lambda_2$ with $\lambda_i \in X^-$, $\check{\theta}_\lambda = (\check{\theta}_{\lambda_2})^{-1}\check{\theta}_{\lambda_1}$, so \eqref{eq:check_theta} holds for all $\lambda \in X$. By linearity, $\check{f} = T_{w_0}^{-1}\, f^{\sharp}\, T_{w_0}$ for every $f \in \mathcal{A}$, where $f^{\sharp}$ denotes the image of $f$ under $\theta_\lambda \mapsto \theta_{-w_0\lambda}$.

If $f \in \mathcal{A}^W$, then $f^{\sharp} = w_0(\bar{f}) = \bar{f}$, which is central in $\mathcal{H}$. Thus $\check{f} = T_{w_0}^{-1}\,\bar{f}\,T_{w_0} = \bar{f}$.
\end{proof}

Since $\check{e}_K = e_K$ and $\bar{f}$ commutes with $e_K$, Proposition \ref{prop:anti_involution_center} immediately implies
\begin{equation}\label{eq:anti_involution_spherical}
(f e_K)\check{\vphantom{f}} = \bar{f}e_K \qquad (f \in \mathcal{A}^W).
\end{equation}

For a smooth representation $V$, let $\widetilde{V}$ denote the smooth dual.

\begin{lemma}[Contragredient parameter]\label{lem:contragredient}
For $f \in \mathcal{A}^W$, write $\bar{f}$ for its image under $\theta_\lambda \mapsto \theta_{-\lambda}$ and set $\bar{\chi}(f) := \chi(\bar{f})$. Then $\widetilde{V_G(\chi)} \cong V_G(\bar{\chi})$.
\end{lemma}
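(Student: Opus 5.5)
The plan is to identify the contragredient through its $K$-fixed line and the action of $\mathcal{H}_K$ on that line. Write $V = V_G(\chi)$. Since $V$ is admissible and irreducible, $\widetilde{V}$ is irreducible and admissible. Moreover, $\widetilde{V}^K = (V^K)^*$ via the pairing, because $e_K$ is self-adjoint for the natural pairing: $\check{e}_K = e_K$. Hence $\widetilde{V}^K$ is one-dimensional and $\widetilde V$ is spherical. By Lemma \ref{lem:spherical_unique} and Definition \ref{def:spherical_rep}, the isomorphism class of an irreducible spherical representation is determined by the character through which $\mathcal{H}_K$ acts on its $K$-fixed line. It therefore suffices to show that $\mathcal{H}_K$ acts on $\widetilde{V}^K$ through $\bar\chi$, where $\bar\chi$ is transported along the Satake isomorphism of Theorem \ref{thm:spherical_iso}, so that $\bar\chi(fe_K) = \chi(\bar f e_K)$.

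The key computation is the adjointness of the Hecke action with respect to the geometric anti-involution. For $\varphi \in C_c(G)$, $v \in V$ and $\tilde v \in \widetilde V$, the change of variables $h \mapsto h^{-1}$ (with $G$ unimodular) gives
$$\langle \varphi\cdot\tilde v, v\rangle = \int_G \varphi(h)\langle \tilde v, h^{-1}v\rangle\,dh = \langle \tilde v, \check\varphi\cdot v\rangle.$$
Now take $\tilde v_K \neq 0$ in $\widetilde V^K$ and $v_K$ spanning $V^K$. Since $\langle\,,\rangle$ restricts to a perfect pairing $\widetilde V^K \times V^K \to \mathbb{C}$, we have $\langle \tilde v_K, v_K\rangle \neq 0$. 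For $h = fe_K$ with $f\in\mathcal{A}^W$, the element $h\cdot\tilde v_K$ lies in $\widetilde V^K$, so it equals $c\,\tilde v_K$ for some scalar $c$. To find $c$, pair with $v_K$ and apply \eqref{eq:anti_involution_spherical}:
$$c\,\langle \tilde v_K, v_K\rangle = \langle \tilde v_K, (fe_K)\check{\ }\cdot v_K\rangle = \langle \tilde v_K, \bar f e_K\cdot v_K\rangle = \chi(\bar f e_K)\,\langle \tilde v_K, v_K\rangle.$$
Hence $c = \bar\chi(h)$, as required.

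I expect the only delicate points to be bookkeeping rather than substance. First, one must confirm that the module structure on $\widetilde V$ is the one induced from the contragredient group action, and that the convolution conventions of Section \ref{sec:notation} make the adjoint of $\varphi$ equal to $\check\varphi$ and not $\varphi$. The identity $\varphi\cdot f = f*\check\varphi$ recorded there fixes this. Second, one must check that $\bar\chi$ is again an algebra character. This holds because $f\mapsto\bar f$ is an algebra automorphism of $\mathcal{A}^W$.
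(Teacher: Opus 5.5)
Your proposal is correct and follows essentially the same route as the paper. Both arguments obtain a one-dimensional $K$-fixed line in the irreducible $\widetilde{V}$, use the adjointness $(h\cdot\lambda)(v)=\lambda(\check{h}\cdot v)$ together with \eqref{eq:anti_involution_spherical} to see that $\mathcal{H}_K$ acts on that line through $\bar\chi$, and conclude via Fact~\ref{fact:borel} and Lemma~\ref{lem:spherical_unique}.
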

\begin{proof}
Set $V = V_G(\chi)$. Since $V$ is irreducible and admissible, $\widetilde{V}$ is irreducible and $\dim \widetilde{V}^K = \dim V^K = 1$. For $h \in \mathcal{H}_K$, $\lambda \in \widetilde{V}^K$, and $v \in V^K$, the contragredient action satisfies $(h\cdot\lambda)(v) = \lambda(\check{h}\cdot v)$. For $F \in \mathcal{A}^W$, equation \eqref{eq:anti_involution_spherical} shows that $(Fe_K)\check{\vphantom{f}} = \bar{F}e_K$, so $\mathcal{H}_K$ acts on $\widetilde{V}^K$ through $\bar{\chi}$. By Fact \ref{fact:borel} and Lemma \ref{lem:spherical_unique}, $\widetilde{V} \cong V_G(\bar{\chi})$.
\end{proof}

\paragraph{Assumption.} Throughout Sections \ref{sec:bbf}--\ref{sec:cs}, we assume that the half-sum of positive absolute coroots satisfies $\rho^\vee \in X = X_*(S)$. Since $\rho^\vee$ is $\sigma$-invariant, this is equivalent to $\rho^\vee \in X_*(T)$. The element $t_{\rho^\vee} \in T(F)$ and the weights $\theta_{\pm\rho^\vee} \in \mathcal{A}$ then exist. We remove this assumption in Section \ref{sec:extended} by passing to an extended central quotient group $G''$.

\section[The Sign Idempotent and a New Proof of the BBF Identity]{\fontsize{15}{18}\selectfont The Sign Idempotent and a New Proof of the BBF Identity}\label{sec:bbf}
This section carries out the second computation announced in the introduction: the explicit description of $e_K\mathcal{H}e_{\mathrm{sgn}}$ inside the free module $\mathcal{H}e_{\mathrm{sgn}} = \mathcal{A}e_{\mathrm{sgn}}$ identified in Lemma \ref{lem:HIK}.

We define operators $\mathcal{L}_{w}: \mathcal{A} \to \mathcal{A}$, $w \in W$, through the free module $\mathcal{H}e_{\mathrm{sgn}} = \mathcal{A}e_{\mathrm{sgn}}$ identified in Lemma \ref{lem:HIK}: for $F \in \mathcal{A}$, there is a unique $\mathcal{L}_{w}(F) \in \mathcal{A}$ with
\begin{equation}\label{eq:Lw_def}
T_{w}\,Fe_{\mathrm{sgn}} = \mathcal{L}_{w}(F)\,e_{\mathrm{sgn}}.
\end{equation}
Each $\mathcal{L}_w$ is $\mathbb{C}$-linear, and $\mathcal{L}_{uv} = \mathcal{L}_{u}\circ\mathcal{L}_{v}$ whenever $\ell(uv) = \ell(u) + \ell(v)$ because then $T_{uv} = T_{u}T_{v}$. For a simple reflection $s_\alpha$, $\alpha \in \Delta$, with associated simple root $a = \tilde\alpha \in \Sigma$, the cross-relation \eqref{eq:cross_relation} and the identity $T_{s_\alpha}e_{\mathrm{sgn}} = -e_{\mathrm{sgn}}$ in Lemma \ref{lem:idempotents} make \eqref{eq:Lw_def} explicit:
\begin{align}\label{eq:Bernstein_esgn}
T_{s_\alpha} Fe_{\mathrm{sgn}} &= \Big(s_{\alpha}(F)\, T_{s_{\alpha}} + \mathcal{G}_a\cdot\big(F - s_\alpha(F)\big)\Big)e_{\mathrm{sgn}} \notag \\
&= \Big(-s_\alpha(F) + \mathcal{G}_a\cdot\big(F - s_\alpha(F)\big)\Big)e_{\mathrm{sgn}},
\end{align}
so that $\mathcal{L}_{s_{\alpha}}(F) = -s_\alpha(F) + \mathcal{G}_a\cdot\big(F - s_\alpha(F)\big)$. 

\begin{lemma}\label{lem:H_e_sgn}
An element $F e_{\mathrm{sgn}}\in \mathcal{A}e_{\mathrm{sgn}}$ belongs to $e_K \mathcal{H} e_{\mathrm{sgn}}$ if and only if for every $\alpha \in \Delta$, with $a = \tilde\alpha$ and $c_i = c_i(a)$,
$$s_{\alpha}(F) = \frac{\big(\theta_{-a^\vee} - c_1\big)\big(\theta_{-a^\vee} + c_2\big)}{\big(1 - c_1\theta_{-a^\vee}\big)\big(1 + c_2\theta_{-a^\vee}\big)}\, F.$$
When $a$ is not divisible, so that $(c_1, c_2) = (q_a, 1)$, the ratio reduces to $\dfrac{\theta_{-a^\vee} - q_a}{1 - q_a\theta_{-a^\vee}}$.
\end{lemma}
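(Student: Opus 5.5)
The plan is to follow the projection argument used in the proof of Theorem \ref{thm:spherical_iso}, replacing $e_K$ on the right by $e_{\mathrm{sgn}}$ and using the operator $\mathcal{L}_{s_\alpha}$ from \eqref{eq:Bernstein_esgn}.

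\textbf{Step 1: reduce membership to simple reflections.} I first show that for $h\in\mathcal{H}$, one has $h\in e_K\mathcal{H}$ if and only if $T_{s_\alpha}h=q_{s_\alpha}h$ for all $\alpha\in\Delta$. This is the same reasoning as the last part of Lemma \ref{lem:idempotents}, now applied to $h$ in all of $\mathcal{H}$.

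\begin{itemize}
\item If $h=e_Kh'$, then $T_{s_\alpha}h=q_{s_\alpha}h$ by Lemma \ref{lem:idempotents}.
\item Conversely, iterating along reduced words gives $T_wh=q_wh$ for every $w\in W$. Hence $e_Kh=W(q)^{-1}\sum_w q_w h=h$.
\end{itemize}

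Applying this to $h=Fe_{\mathrm{sgn}}$ gives the reduction: $Fe_{\mathrm{sgn}}\in e_K\mathcal{H}e_{\mathrm{sgn}}$ if and only if $Fe_{\mathrm{sgn}}\in e_K\mathcal{H}$. Indeed, if $Fe_{\mathrm{sgn}}=e_Kh'$, then $Fe_{\mathrm{sgn}}=Fe_{\mathrm{sgn}}e_{\mathrm{sgn}}=e_Kh'e_{\mathrm{sgn}}$, which lies in $e_K\mathcal{H}e_{\mathrm{sgn}}$.

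\textbf{Step 2: translate into an equation in $\mathcal{A}$.} By \eqref{eq:Bernstein_esgn} and the injectivity in Lemma \ref{lem:HIK}, the condition $T_{s_\alpha}(Fe_{\mathrm{sgn}})=q_aFe_{\mathrm{sgn}}$ is equivalent to
$$-s_\alpha(F)+\mathcal{G}_a\bigl(F-s_\alpha(F)\bigr)=q_aF.$$
Rearranging in $\mathrm{Frac}(\mathcal{A})$, this reads
$$(1+\mathcal{G}_a)\,s_\alpha(F)=(\mathcal{G}_a-q_a)\,F.$$

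\textbf{Step 3: factor both sides.} Write $x=\theta_{a^\vee}$ and $c_i=c_i(a)$, and use $c_1c_2=q_a$. Clearing the common denominator $1-x^2$:

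\begin{itemize}
\item The numerator of $\mathcal{G}_a-q_a$ is $q_ax^2+(c_1-c_2)x-1=(c_1x-1)(c_2x+1)$. This matches the computation in the proof of Theorem \ref{thm:spherical_iso}.
\item The numerator of $1+\mathcal{G}_a$ is $q_a+(c_1-c_2)x-x^2=(c_1-x)(c_2+x)$.
\end{itemize}

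The second numerator is non-zero, since its $\theta_0$-coefficient is $q_a\neq0$. So we may divide, and the condition becomes
$$s_\alpha(F)=\frac{(c_1x-1)(c_2x+1)}{(c_1-x)(c_2+x)}\,F.$$
Dividing each of the four linear factors by $x$ and setting $y=\theta_{-a^\vee}$ turns the ratio into
$$\frac{(c_1-y)(c_2+y)}{(c_1y-1)(c_2y+1)}=\frac{(y-c_1)(y+c_2)}{(1-c_1y)(1+c_2y)},$$
which is the displayed formula.

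\textbf{Step 4: the non-divisible case.} When $(c_1,c_2)=(q_a,1)$, the factor $(y+1)/(1+y)$ cancels, leaving
$$\frac{\theta_{-a^\vee}-q_a}{1-q_a\theta_{-a^\vee}}.$$

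All steps are reversible, so the equivalence holds in both directions. The only delicate points are the two factorizations in Step 3 and the justification for dividing. Both rest on $c_1c_2=q_a$ and on $\mathcal{A}$ being an integral domain, which is why the computation is carried out in $\mathrm{Frac}(\mathcal{A})$.
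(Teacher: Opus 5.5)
Your proof is correct and follows essentially the same route as the paper. You reduce membership to $T_{s_\alpha}Fe_{\mathrm{sgn}}=q_aFe_{\mathrm{sgn}}$, compare $\mathcal{A}$-coordinates via Lemma \ref{lem:HIK}, factor both numerators using $c_1c_2=q_a$, and rewrite in $\theta_{-a^\vee}$. Your explicit check that the numerator of $1+\mathcal{G}_a$ is non-zero (its $\theta_0$-coefficient is $q_a$) justifies a division that the paper covers only by the phrase ``every step is reversible.''
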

\begin{proof}
The element $F e_{\mathrm{sgn}}$ lies in $e_K \mathcal{H} e_{\mathrm{sgn}}$ if and only if $e_{K}F e_{\mathrm{sgn}} = F e_{\mathrm{sgn}}$. By Lemma \ref{lem:idempotents}, this is equivalent to $T_{s_{\alpha}} F e_{\mathrm{sgn}} = q_a F e_{\mathrm{sgn}}$ for all simple roots $\alpha$.

Combining $T_{s_{\alpha}} F e_{\mathrm{sgn}} = q_a F e_{\mathrm{sgn}}$ with \eqref{eq:Bernstein_esgn}, and comparing $\mathcal{A}$-coordinates through the injectivity of $P \mapsto Pe_{\mathrm{sgn}}$ (Lemma \ref{lem:HIK}), we obtain
\begin{align*}
-s_\alpha(F) + \mathcal{G}_a\cdot\big(F - s_\alpha(F)\big) = q_a F, \qquad \text{that is,} \qquad s_\alpha(F)\,\big(1 + \mathcal{G}_a\big) = F\,\big(\mathcal{G}_a - q_a\big).
\end{align*}
Write $\theta := \theta_{a^\vee}$. After clearing the denominator $1 - \theta_{2a^\vee}$ of $\mathcal{G}_a$, the two brackets factor:
$$\big(\mathcal{G}_a - q_a\big)\big(1-\theta_{2a^\vee}\big) = q_a\theta^2 + (c_1-c_2)\theta - 1 = \big(c_1\theta - 1\big)\big(c_2\theta + 1\big),$$
$$\big(1 + \mathcal{G}_a\big)\big(1-\theta_{2a^\vee}\big) = q_a + (c_1-c_2)\theta - \theta^2 = -\big(\theta - c_1\big)\big(\theta + c_2\big).$$
Both factorizations use $c_1c_2 = q_a$. Hence the condition reads
$$s_\alpha(F) = \frac{\big(c_1\theta-1\big)\big(c_2\theta+1\big)}{-\big(\theta-c_1\big)\big(\theta+c_2\big)}\,F = \frac{\big(\theta^{-1} - c_1\big)\big(\theta^{-1} + c_2\big)}{\big(1 - c_1\theta^{-1}\big)\big(1 + c_2\theta^{-1}\big)}\,F,$$
where the second equality multiplies numerator and denominator by $\theta^{-2}$. Every step is reversible, so the displayed relation for all simple $\alpha$ is equivalent to $F e_{\mathrm{sgn}} \in e_K\mathcal{H}e_{\mathrm{sgn}}$. For the last claim, cancel the common factor $(\theta^{-1}+1)/(1+\theta^{-1}) = 1$.
\end{proof}

The next lemma controls denominators in $\mathcal{A}$. It is used in the proof of Proposition \ref{prop:eKHe_sgn}.

\begin{lemma}[Coprimality]\label{lem:coprime}
Let $L$ be a field and $\mathcal{A}_L := L[X]$ the group algebra of $X$ over $L$, a unique factorization domain. Let $\mu, \nu \in X$ be non-zero, and let $c, c' \in L^\times$.
\begin{enumerate}
\item If $1 - c\,\theta_\mu$ and $1 - c'\,\theta_\nu$ have a common non-unit divisor in $\mathcal{A}_L$, then $\mu$ and $\nu$ are proportional.
\item If $\nu = \mu$, they have a common non-unit divisor only if $c' = c$. If $\nu = -\mu$, they have a common non-unit divisor only if $c' = c^{-1}$.
\end{enumerate}
\end{lemma}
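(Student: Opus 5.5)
The approach is to reduce to a rank-one situation by a change of coordinates on the lattice, and then study the zero sets of the two binomials on the character torus $\mathrm{Hom}(X,\bar L^\times)$.

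First, pass to an algebraic closure $\bar L$. A common non-unit divisor over $L$ stays a common non-unit divisor over $\bar L$: a non-unit of $L[X]$ is not of the form $c\theta_\lambda$, and $L[X]^\times = L^\times\theta_X$ remains the unit group description over $\bar L$ restricted to $L[X]$. So we may assume $L$ is algebraically closed. Since $\bar L[X]$ is the coordinate ring of the torus $\mathcal T := \mathrm{Hom}(X,\bar L^\times)$, a common non-unit divisor $d$ cuts out a non-empty hypersurface $V(d)\subseteq\mathcal T$. Non-emptiness holds because $d$ is not a unit, by the Nullstellensatz on the affine variety $\mathcal T$. This hypersurface lies in $V(1-c\theta_\mu)\cap V(1-c'\theta_\nu)$ and has codimension one.

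For (1), suppose $\mu$ and $\nu$ are not proportional. The map $\mathcal T\to(\bar L^\times)^2$, $t\mapsto(t(\mu),t(\nu))$, is dual to the inclusion of the rank-two sublattice $\mathbb Z\mu+\mathbb Z\nu$, so it is surjective with all fibres of codimension two. The intersection above is the fibre over $(c^{-1},c'^{-1})$, so it has codimension two. This contradicts containing a hypersurface.

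A more elementary alternative is available if one prefers to avoid geometry. Choose a basis of $X$ in which $\mu = m e_1$ and $\nu = n_1 e_1 + n_2 e_2$ with $n_2\neq 0$, using Smith normal form for the sublattice. Then $1-c\theta_\mu$ lies in $\bar L[x_1^{\pm1}]$ and factors into linear factors $1-\zeta x_1$. Each such factor is prime in $\bar L[X]$, and modulo it $1-c'\theta_\nu$ becomes $1-c'\zeta^{-n_1}x_2^{n_2}$, which is a nonzero non-unit in $\bar L[x_2^{\pm1},\dots]$. Hence no prime factor is shared.

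For (2), when $\nu=\pm\mu$ everything lives in $\bar L[\theta_{\mu_0}^{\pm1}]$ with $\mu=k\mu_0$ and $\mu_0$ primitive. In that ring, $1-c\theta_\mu$ vanishes exactly at the $\theta_{\mu_0}$ with $\theta_{\mu_0}^k = c^{-1}$. A common zero with $1-c'\theta_{\mu}$ forces $c=c'$. A common zero with $1-c'\theta_{-\mu}$ forces $c^{-1}=c'^{-1}\cdot{}$ inverted, i.e.\ $\theta_\mu = c^{-1} = c'$, so $c'=c^{-1}$. Irreducible factors of these elements in $\bar L[X]$ come from factors in $\bar L[\theta_{\mu_0}^{\pm1}]$, because $\mu_0$ extends to a basis. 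So a common non-unit divisor yields a common root.

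The main obstacle is the bookkeeping of units and primes in the Laurent ring. We need that irreducibles of the one-variable subring remain prime after adjoining the other basis variables, and this holds since $\bar L[X]\cong\bar L[x_1^{\pm1}][x_2^{\pm1},\dots]$ is a Laurent extension.
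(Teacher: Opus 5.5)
Your proof is correct, but it takes a different route from the paper's.

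\textbf{Part (1).} The paper works directly over $L$ with Newton polytopes. By Ostrowski's identity $\operatorname{Newt}(fg)=\operatorname{Newt}(f)+\operatorname{Newt}(g)$, the Newton polytope of any divisor of $1-c\,\theta_\mu$ is a Minkowski summand of the segment $[0,\mu]$. Such a divisor is therefore either a monomial, hence a unit, or has a segment parallel to $\mu$ as its Newton polytope. A common non-unit divisor would then have a polytope parallel to both $\mu$ and $\nu$.

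You instead extend scalars to $\bar L$, using the unit description $L[X]^\times=L^\times\theta_X$, which you correctly supply. You then argue in one of two ways:
\begin{enumerate}
\item by dimension theory on the torus: the Nullstellensatz and Krull's principal ideal theorem give a non-empty hypersurface, which cannot sit inside a codimension-two fibre;
\item in your elementary variant, by choosing a basis adapted to $\mu$ and $\nu$ and identifying the primes $1-\zeta\,\theta_{\mu_0}$ explicitly.
\end{enumerate}

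The trade-off is this. The paper's route is shorter and works over any field, with no base change, no basis adaptation, and no primality check after adjoining variables. Yours avoids citing Ostrowski and yields more: the explicit prime factorization of the binomials.

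\textbf{Part (2).} The paper uses a difference trick. If $\nu=\mu$ and $c'\neq c$, a common divisor divides $(c'-c)\theta_\mu$, which is a unit. The case $\nu=-\mu$ reduces to this one via $1-c'\theta_{-\mu}=-c'\theta_{-\mu}(1-c'^{-1}\theta_\mu)$. This needs no factorization at all, so it is simpler than your common-root argument, which relies on factoring $1-c\,\theta_\mu$ in $\bar L[\theta_{\mu_0}^{\pm1}]$. Your sentence ``forces $c^{-1}=c'^{-1}\cdot{}$ inverted'' is garbled, though your conclusion $\theta_\mu=c^{-1}=c'$ is right.
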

\begin{proof}
For a non-zero $f \in \mathcal{A}_L$, let $\operatorname{Newt}(f) \subseteq X \otimes_{\mathbb{Z}} \mathbb{R}$ denote the convex hull of its exponents. Ostrowski's identity \cite[Theorem VI]{Ost75}
$$\operatorname{Newt}(fg) = \operatorname{Newt}(f) + \operatorname{Newt}(g)$$
shows that the Newton polytope of a divisor of $1 - c\,\theta_\mu$ is a Minkowski summand of $[0,\mu]$, hence is either a point or a segment parallel to $\mu$. In the first case the divisor is a non-zero scalar multiple of a monomial, hence a unit. A common non-unit divisor of the two binomials therefore forces $\mu$ and $\nu$ to be proportional. This proves (1).

If $\nu = \mu$ and $c' \neq c$, a common divisor of the two binomials divides their difference $(c' - c)\,\theta_\mu$, which is a unit. If $\nu = -\mu$, then
$$1 - c'\,\theta_{-\mu} = -c'\,\theta_{-\mu}\big(1 - c'^{-1}\theta_\mu\big),$$
so the preceding case shows that $c = c'^{-1}$. This proves (2).
\end{proof}

For $a\in\Sigma$, set
$$f_a := \begin{cases} 1 - q_a\theta_{-a^\vee} & \text{if } a \text{ is not divisible},\\[2pt] \big(1 - c_1(a)\theta_{-a^\vee}\big)\big(1 + c_2(a)\theta_{-a^\vee}\big) & \text{if } a \text{ is divisible}\end{cases}.$$
Here the parameters $q_a, c_i(a)$ are understood through their $W$-invariant extensions to all of $\Sigma$. The substitution $\theta_\lambda\mapsto\theta_{-\lambda}$ sends $f_a$ to $f_{-a}$.

\begin{proposition} \label{prop:eKHe_sgn}
Set
$$\Omega^{+} := \theta_{\rho^\vee} \prod_{a \in \Sigma^+} f_a.$$
Then
$$e_K \mathcal{H} e_{\mathrm{sgn}} = \mathcal{A}^W\,\Omega^+e_{\mathrm{sgn}},$$
a free $\mathcal{A}^W$-module of rank one.
In particular, for split $G$, this is $\Omega^+ = \theta_{\rho^\vee}\prod_{\alpha\in\Phi^+}(1 - q\theta_{-\alpha^\vee})$.
\end{proposition}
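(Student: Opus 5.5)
By Lemma \ref{lem:H_e_sgn}, the statement reduces to describing all $F\in\mathcal{A}$ such that, for every simple $\alpha$ with $a=\tilde\alpha$,
$$s_\alpha(F)\,f_a = r_a\,F, \qquad r_a:=(\theta_{-a^\vee}-c_1)(\theta_{-a^\vee}+c_2).$$
Here $f_a=(1-c_1\theta_{-a^\vee})(1+c_2\theta_{-a^\vee})$ in both cases; in the non-divisible case $c_2=1$ and the factor $1+\theta_{-a^\vee}$ cancels. The plan has three steps: check that $\Omega^+$ satisfies these relations, show that every solution is divisible by $\prod_{a\in\Sigma^+}f_a$ with a quotient that is $W$-invariant up to the factor $\theta_{\rho^\vee}$, and finally read off freeness.

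\emph{Step 1: $\Omega^+$ is a solution.} Fix a simple $\alpha$. The reflection $s_\alpha$ permutes $\Sigma^+\setminus\{a\}$ and sends $a\mapsto -a$. The parameters are $W$-invariant, so $s_\alpha(f_b)=f_{s_\alpha b}$, and therefore
$$s_\alpha\Bigl(\prod_{b\in\Sigma^+}f_b\Bigr)=f_{-a}\prod_{b\neq a}f_b .$$
By \eqref{eq:rho_pairings}, $s_\alpha(\theta_{\rho^\vee})=\theta_{\rho^\vee}\theta_{-m_aa^\vee}$. It then remains to verify the one-variable identity
$$\theta_{-m_a a^\vee}\,f_{-a}=r_a .$$
In the non-divisible case this reads $\theta^{-1}(1-q_a\theta)=\theta^{-1}-q_a$, with $\theta=\theta_{a^\vee}$. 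In the divisible case it reads $\theta^{-2}(1-c_1\theta)(1+c_2\theta)=(\theta^{-1}-c_1)(\theta^{-1}+c_2)$. Both are immediate.

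\emph{Step 2: every solution is a multiple of $\Omega^+$.} This is the main obstacle. Take a solution $F$. For $w\in W$ I would first establish a transformation rule $w(F)=J_w F$, where $J_w$ is the product of the ratios $r_b/f_b$ over $b\in\Sigma^+\cap w\Sigma^-$, up to sign conventions. This follows by the cocycle argument, since the ratios for the individual simple reflections compose. For a fixed $a\in\Sigma^+$, write $a=w(a_0)$ with $a_0$ simple. From $s_{a_0}(F)f_{a_0}=r_{a_0}F$ and $\gcd(f_{a_0},r_{a_0})=1$, I want to conclude that $f_{a_0}\mid F$.

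The gcd claim comes from Lemma \ref{lem:coprime}(2). After rewriting $r_{a_0}$ via $\theta^{-1}$, the relevant binomials are $1-c_1\theta^{-1}$ against $1-c_1^{-1}\theta^{-1}$ and $1+c_2\theta^{-1}$ against $1+c_2^{-1}\theta^{-1}$. These are coprime because $c_1\neq c_1^{-1}$ (as $c_1=q_a>1$ or $q^{2d}>1$), and similarly $c_2\neq c_2^{-1}$ when $c_2>1$. In the non-divisible case, where $c_2=1$, the common factor has already been cancelled. Conjugating by $w$ then gives $f_a\mid w(F)$, and the transformation rule turns this into $f_a\mid F$ up to units. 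Denominators that are coprime to $f_a$ are controlled by Lemma \ref{lem:coprime}(1): binomials attached to non-proportional coroots are coprime.

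For distinct positive roots the $f_a$ are pairwise coprime. This uses Lemma \ref{lem:coprime}(1) for non-proportional coroots; in a reduced $\Sigma$, the only proportional pair is $\pm a$, and only positive roots are used. Hence $\prod_{a\in\Sigma^+}f_a$ divides $F$ in the UFD $\mathcal{A}$. Write $F=G\,\Omega^+$ with $G\in\mathcal{A}\theta_{-\rho^\vee}=\mathcal{A}$. Since $F$ and $\Omega^+$ both satisfy the same relations, and $\mathcal{A}$ is a domain, we get $s_\alpha(G)=G$ for every simple $\alpha$, so $G\in\mathcal{A}^W$.

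\emph{Step 3: equality and freeness.} Conversely, by Step 1 and centrality, every $G\,\Omega^+e_{\mathrm{sgn}}$ with $G\in\mathcal{A}^W$ satisfies the relations of Lemma \ref{lem:H_e_sgn}. This gives $e_K\mathcal{H}e_{\mathrm{sgn}}=\mathcal{A}^W\Omega^+e_{\mathrm{sgn}}$. Freeness of rank one follows from the injectivity of $F\mapsto Fe_{\mathrm{sgn}}$ in Lemma \ref{lem:HIK} together with $\Omega^+\neq0$ in the domain $\mathcal{A}$.

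For split $G$, all roots are non-divisible and $q_a=q$, which gives $\Omega^+=\theta_{\rho^\vee}\prod_{\alpha\in\Phi^+}(1-q\theta_{-\alpha^\vee})$.
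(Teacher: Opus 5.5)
Your proposal is correct. Your Steps 1 and 3 coincide with the paper's, but your Step 2 (the converse) follows a genuinely different route.

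The paper never proves divisibility by individual factors. It sets $P:=F/\Omega^+\in\mathrm{Frac}(\mathcal{A})$ and notes from Lemma \ref{lem:H_e_sgn} and Step 1 that $P$ is $W$-invariant. It then writes $P=P_1/P_2$ in lowest terms and shows that $P_2$ is a unit. An irreducible $d\mid P_2$ divides some factor $1-c\,\theta_{-a^\vee}$ of $\Omega^+$. Since $s_a(P_2)$ is an associate of $P_2$, the element $s_a(d)$ divides both $1-c\,\theta_{a^\vee}$ and some factor $1-c'\theta_{-b^\vee}$ of $\Omega^+$. Lemma \ref{lem:coprime} rules this out, because $c'=c^{-1}$ is impossible for admissible constants.

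You instead extract $f_{a_0}\mid F$ directly from the functional equation at a simple root, using $\gcd(f_{a_0},r_{a_0})=1$. You then transport this along $W$ via $w(F)/F=w(\Omega^+)/\Omega^+$ and assemble the full product by pairwise coprimality of the $f_a$. Both routes rest on the same input: Lemma \ref{lem:coprime} and the fact that no admissible constant is the inverse of another. The paper's symmetry argument is shorter, since it uses only the reflection $s_a$ in each root, with no inversion-set bookkeeping and no assembly step. Yours shows more transparently that each factor $f_a$ of $\Omega^+$ is forced by the simple-root relation on the $W$-orbit of $a$.

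If you write it out, make two points explicit.
\begin{enumerate}
\item In the divisible case, coprimality of $f_{a_0}$ and $r_{a_0}$ also requires the cross pairs. With $x:=\theta_{-a_0^\vee}$, these are $1-c_1x$ against $1+c_2^{-1}x$, and $1+c_2x$ against $1-c_1^{-1}x$. Both are immediate from Lemma \ref{lem:coprime}(2), since the constants have opposite signs.
\item The transport step works because $a=w(a_0)$ does not lie in $N:=\Sigma^+\cap w\Sigma^-$, as $w^{-1}a=a_0>0$. Cancelling the common factors $f_c$, $c\in\Sigma^+\setminus N$, from $w(F)\,\Omega^+=F\,w(\Omega^+)$ gives
\[
w(F)\,\theta_{\rho^\vee}\prod_{c\in N}f_c=F\,\theta_{w\rho^\vee}\prod_{c\in N}f_{-c}.
\]
So $f_a\mid w(F)$ faces only the factors $f_{-c}$ with $c\neq a$ positive. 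These are coprime to $f_a$ by Lemma \ref{lem:coprime}(1), which yields $f_a\mid F$.
\end{enumerate}
The phrase ``up to sign conventions'' should be replaced by this precise choice of $N$.
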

\begin{proof}
Fix $\alpha \in \Delta$ and put $a = \tilde\alpha$, $c_i = c_i(a)$. The reflection $s_\alpha$ permutes $\Sigma^+ \setminus \{a\}$ and preserves the functions $m$, $q$, $c_1$, $c_2$, so it permutes the factors $f_b$ with $b \neq a$. By \eqref{eq:rho_pairings}, $s_\alpha(\theta_{\rho^\vee}) = \theta_{\rho^\vee}\, \theta_{-m_a a^\vee}$. For $m_a = 1$,
$$s_\alpha\big(\theta_{\rho^\vee} f_a\big) = \theta_{\rho^\vee}\theta_{-a^\vee}\big(1 - q_a\theta_{a^\vee}\big) = \theta_{\rho^\vee}\big(\theta_{-a^\vee} - q_a\big),$$
and for $m_a = 2$,
$$s_\alpha\big(\theta_{\rho^\vee} f_a\big) = \theta_{\rho^\vee}\theta_{-2a^\vee}\big(1 - c_1\theta_{a^\vee}\big)\big(1 + c_2\theta_{a^\vee}\big) = \theta_{\rho^\vee}\big(\theta_{-a^\vee} - c_1\big)\big(\theta_{-a^\vee} + c_2\big),$$
so in both cases $s_\alpha(\Omega^+)/\Omega^+$ is exactly the ratio in Lemma \ref{lem:H_e_sgn}. It follows that $\Omega^{+}e_{\mathrm{sgn}} \in e_K \mathcal{H} e_{\mathrm{sgn}}$, and the same lemma shows $P\,\Omega^{+}e_{\mathrm{sgn}} \in e_K \mathcal{H} e_{\mathrm{sgn}}$ for every $P \in \mathcal{A}^W$.

Conversely, let $F\in\mathcal A$ and suppose that $Fe_{\mathrm{sgn}}\in e_K\mathcal He_{\mathrm{sgn}}$. Define $P := F / \Omega^{+} \in \mathrm{Frac}(\mathcal{A})$. For any simple root $\alpha$, put $a=\tilde\alpha$ and $c_i=c_i(a)$. Then $s_{\alpha}(F) = s_{\alpha}(P) s_{\alpha}(\Omega^{+})$. The first equality below follows from Lemma \ref{lem:H_e_sgn}, and the second from the calculation above:
$$
\frac{s_\alpha(F)}{F}
=
\frac{\big(\theta_{-a^\vee} - c_1\big)\big(\theta_{-a^\vee} + c_2\big)}
{\big(1 - c_1\theta_{-a^\vee}\big)\big(1 + c_2\theta_{-a^\vee}\big)}
=
\frac{s_\alpha(\Omega^+)}{\Omega^+}.
$$
Since $F=P\Omega^+$, cancellation shows that $s_\alpha(P)=P$. Hence $P$ is $W$-invariant.

It remains to show that $P$ lies in $\mathcal{A}$ and not merely in $\mathrm{Frac}(\mathcal{A})$. The factors of $\Omega^+$ other than the unit $\theta_{\rho^\vee}$ are among the elements $1 - c\,\theta_{-a^\vee}$ with $a \in \Sigma^+$ and $c \in \{q_a\}$ or $\{c_1(a), -c_2(a)\}$. We call these constants \emph{admissible}, and each is, up to sign, a positive integral power of $q$.

Now write $P = P_1/P_2$ in lowest terms, with $P_1, P_2 \in \mathcal{A}$ coprime. From $P_1\,\Omega^{+} = P_2 F$, we get $P_2 \mid \Omega^{+}$. Suppose $P_2$ is not a unit and let $d$ be an irreducible factor of $P_2$. Since the unit $\theta_{\rho^\vee}$ contributes nothing, $d$ divides $1 - c\,\theta_{-a^\vee}$ for some $a \in \Sigma^+$ and some admissible $c$.

Since $s_a(P)=P$, the fractions $P_1/P_2$ and $s_a(P_1)/s_a(P_2)$ are reduced representations of the same element. Thus $s_a(P_2)$ is an associate of $P_2$. It follows that the irreducible element $s_a(d)$ divides $P_2$ and hence some factor $1-c'\theta_{-b^\vee}$ of $\Omega^+$, where $b \in \Sigma^+$ and $c'$ is admissible. It also divides
$$s_a(1-c\theta_{-a^\vee})=1-c\theta_{a^\vee}.$$
Lemma \ref{lem:coprime}(1) shows that $a^\vee$ and $b^\vee$ are proportional. Since the coroot system is reduced and both are positive, they are equal. Lemma \ref{lem:coprime}(2), applied with $\nu=-\mu=-a^\vee$, then forces $c'=c^{-1}$. This is impossible because every admissible constant has the form $\pm q^n$ with $n\geq1$, whereas its inverse has negative $q$-exponent. Hence $P_2$ is a unit and $P \in \mathcal{A}^W$.

Freeness holds because $\mathcal{A}$ is a domain and $F \mapsto Fe_{\mathrm{sgn}}$ is injective on $\mathcal{A}$ (Lemma \ref{lem:HIK}).
\end{proof}

The main theorem of this section generalizes an identity of Brubaker, Bump, and Friedberg, which we first recall. Suppose $G$ is split, with all parameters equal to $q$, so that every $m_a = 1$ and every $f_a = 1 - q\,\theta_{-a^\vee}$. They proved an evaluation for each linear character of the finite Hecke algebra $\mathcal{H}_0$ \cite[Theorem 5]{BBF16}. In the coordinates of this section, the sign-character case reads
$$\sum_{w \in W} \mathcal{L}_{w}(\theta_\lambda) \,=\, \left(\prod_{a \in \Sigma^+} \frac{1 - q\,\theta_{-a^\vee}}{1 - \theta_{-a^\vee}}\right) \mathrm{Alt}(\theta_\lambda), \qquad \lambda \in X,$$
the split equal-parameter case of the theorem below, one monomial $F = \theta_\lambda$ at a time. Brubaker, Bump, and Friedberg express the induced-sign Hecke action in terms of Demazure operators and specialize at $q=0$, where the spherical symmetrizer becomes the longest Demazure operator. They then use the equality of the Demazure and Weyl character formulas. In the proof below, Proposition \ref{prop:eKHe_sgn} replaces BBF's Demazure-operator analysis of the spherical symmetrizer with the rank-one description of $e_K\mathcal H e_{\mathrm{sgn}}$. A degree argument shows that the resulting $W$-invariant factor is independent of $q$, and specialization at $q=1$ determines it through the Weyl denominator formula in Fact \ref{fact:denominator}.

\begin{theorem}[Algebraic BBF identity] \label{thm:bbf_generalized}
Given $F \in \mathcal{A}$,
\begin{equation*}
\sum_{w \in W} \mathcal{L}_{w}(F) = P(F) \cdot \Omega^{+} = \left( \prod_{a \in \Sigma^+} \frac{f_a}{1 - \theta_{-m_a a^\vee}} \right) \mathrm{Alt}(F),
\end{equation*}
where $\displaystyle P(F) = \frac{\mathrm{Alt}(F)}{ \mathrm{Alt}(\theta_{\rho^{\vee}})} \in \mathcal{A}^{W}$. Consequently,
$$e_{K}Fe_{\mathrm{sgn}} = \frac{1}{W(q)}\cdot\frac{\mathrm{Alt}(F)}{\mathrm{Alt}(\theta_{\rho^{\vee}})}\Omega^{+} e_{\mathrm{sgn}}.$$
\end{theorem}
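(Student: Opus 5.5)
The plan is to compute $e_K F e_{\mathrm{sgn}}$ in two ways and compare. Since $e_K = W(q)^{-1}\sum_{w\in W}T_w$, the definition \eqref{eq:Lw_def} gives
$$W(q)\, e_K F e_{\mathrm{sgn}} = \Big(\sum_{w\in W}\mathcal{L}_w(F)\Big) e_{\mathrm{sgn}}.$$
The left side lies in $e_K\mathcal{H}e_{\mathrm{sgn}}$. Proposition \ref{prop:eKHe_sgn} and the injectivity in Lemma \ref{lem:HIK} then give a unique $P(F)\in\mathcal{A}^W$ with $\sum_w \mathcal{L}_w(F) = P(F)\,\Omega^+$. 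The assignment $F\mapsto P(F)$ is $\mathbb{C}$-linear. Once the first equality in the theorem is established, the second equality follows from Fact \ref{fact:denominator}: we have $\Omega^+/\mathrm{Alt}(\theta_{\rho^\vee}) = \prod_{a}f_a/(1-\theta_{-m_aa^\vee})$. The "consequently" clause follows from the display above. So everything reduces to identifying $P(F) = \mathrm{Alt}(F)/\mathrm{Alt}(\theta_{\rho^\vee})$.

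First I would show that $P$ is anti-invariant, i.e. $P(s_\alpha F) = -P(F)$. I would get this from the linear structure of $\mathcal{L}_{s_\alpha}$. Right multiplication by $T_{s_\alpha}$ on $e_K$ gives $e_K T_{s_\alpha} = q_a e_K$. On the other hand, $T_{s_\alpha} F e_{\mathrm{sgn}} = \mathcal{L}_{s_\alpha}(F)e_{\mathrm{sgn}}$. Hence $P(\mathcal{L}_{s_\alpha}F) = q_a P(F)$. Writing $\mathcal{L}_{s_\alpha}(F) = -s_\alpha F + \mathcal{G}_a(F-s_\alpha F)$, I want to show that $P$ kills elements of the form $(1+\mathcal{G}_a)(F - s_\alpha F)$ suitably twisted. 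Combining this with $P$ applied to $s_\alpha$-invariant inputs, the relation rearranges to $P(F + s_\alpha F)=0$. As a cleaner alternative, I would verify directly that $(1+T_{s_\alpha})$ applied on the left of $\mathcal{A}^{s_\alpha}e_{\mathrm{sgn}}$ gives zero after applying $e_K$. Concretely, for $s_\alpha$-invariant $F$ we have $T_{s_\alpha}Fe_{\mathrm{sgn}} = -Fe_{\mathrm{sgn}}$ by \eqref{eq:Bernstein_esgn}. Then $q_a e_KFe_{\mathrm{sgn}} = e_KT_{s_\alpha}Fe_{\mathrm{sgn}} = -e_KFe_{\mathrm{sgn}}$, forcing $e_KFe_{\mathrm{sgn}}=0$. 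Thus $P$ vanishes on $\mathcal{A}^{s_\alpha}$ for each simple $\alpha$. Since $F + s_\alpha F \in \mathcal{A}^{s_\alpha}$, $P$ is anti-invariant. It follows that $P(F) = |W|^{-1}P(\mathrm{Alt}F)$, and $P$ factors through $\mathrm{Alt}$.

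The anti-invariants $\mathrm{Alt}(\mathcal{A})$ are spanned by the $\mathrm{Alt}(\theta_{y+\rho^\vee})$ with $y\in X^+$. They form a free $\mathcal{A}^W$-module of rank one on $\mathrm{Alt}(\theta_{\rho^\vee})$, using $\rho^\vee\in X$ and Fact \ref{fact:chV}. So it suffices to show $P(\theta_{y+\rho^\vee}) = \mathrm{ch}V_y$ for dominant $y$. Equivalently, I want $P(FH) = H\,P(F)$ for $H\in\mathcal{A}^W$ together with $P(\theta_{\rho^\vee}) = 1$. The $\mathcal{A}^W$-linearity is immediate from centrality of $\mathcal{A}^W$ in $\mathcal{H}$ (Section \ref{subsec:bernstein}): $e_K HFe_{\mathrm{sgn}} = H e_KFe_{\mathrm{sgn}}$. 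Hence $P(F) = c\cdot \mathrm{Alt}(F)/\mathrm{Alt}(\theta_{\rho^\vee})$ with the single constant $c := P(\theta_{\rho^\vee})\in\mathcal{A}^W$.

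The main obstacle is showing $c=1$. I would follow the strategy the paper announces, a degree argument followed by specialization at $q=1$. I would treat all parameters as formal, i.e. regard $q^{1/2}$ (or the $c_i$) as indeterminates, so that the Bernstein presentation holds over $\mathbb{C}[q^{\pm1/2}]$. Then $c$ becomes a $W$-invariant Laurent polynomial in $q$ with coefficients in $\mathcal{A}$. I would compare the supports (Newton polytopes) of $\sum_w\mathcal{L}_w(\theta_{\rho^\vee})$ and $\Omega^+$. Each $\mathcal{L}_{s_\alpha}$ only creates monomials in the convex hull of $W$-orbits, and $\Omega^+$ already has Newton polytope equal to the full hull of $W\rho^\vee$. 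This should force $c$ to be a constant in $\mathcal{A}$, i.e. of degree $0$ in the $\theta$'s. Next I would compare the top power of $q$ on both sides, which should force $c$ to be independent of $q$. Finally, at $q=1$ (all $c_i=1$, with $c_2=1$ consistent), $\mathcal{G}_a = \theta_{a^\vee}/(1-\theta_{a^\vee})\cdot(\ldots)$ reduces to the divided-difference form. Then $\mathcal{H}$ becomes the group algebra $\mathbb{C}[W\ltimes X]$ with $T_{s}\mapsto s$, and $e_{\mathrm{sgn}}$ becomes the sign idempotent. In this case $\sum_w\mathcal{L}_w(F) = \mathrm{Alt}(F)$ directly, while $\Omega^+|_{q=1} = \mathrm{Alt}(\theta_{\rho^\vee})$ by Fact \ref{fact:denominator}; this gives $c=1$. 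The delicate points are justifying the specialization, i.e. that the $f_a$ become $1-\theta_{-m_aa^\vee}$ at $q=1$ (true since $(1-\theta)(1+\theta)=1-\theta^2$), and making the $q$-degree comparison rigorous.
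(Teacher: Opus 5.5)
Your proposal is correct. Its endgame is the paper's: generic parameters, the bound $\deg_q L\le N=\deg_q\Omega^+$, and specialization at $q=1$ via Fact \ref{fact:denominator}. The middle part is organized differently.

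The paper applies the degree argument directly to $P(F)$ for every $F$. Gauss's lemma puts $P(F)$ in $\mathbb{C}[q][X]^W$, the degree bound makes it independent of $q$, and $q=1$ identifies it. So the paper needs no anti-invariance, no $\mathcal{A}^W$-linearity, no basis of anti-invariants, and no Newton polytopes.

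You instead first show, at the actual parameter, two structural facts:
\begin{enumerate}
\item $F\mapsto e_KFe_{\mathrm{sgn}}$ kills $\mathcal{A}^{s_\alpha}$, via $q_a e_KFe_{\mathrm{sgn}}=e_KT_{s_\alpha}Fe_{\mathrm{sgn}}=-e_KFe_{\mathrm{sgn}}$;
\item it is $\mathcal{A}^W$-linear, by centrality.
\end{enumerate}
Together these give $P(F)=c\,\mathrm{Alt}(F)/\mathrm{Alt}(\theta_{\rho^\vee})$ with a single unknown $c\in\mathcal{A}^W$. You then use Ostrowski's identity to see that $c$ is a scalar. The two inputs are that $\mathcal{L}_{s_\alpha}(\theta_\mu)$ is supported on the segment $[\mu,s_\alpha\mu]$, and that $\operatorname{Newt}(\Omega^+)=\operatorname{conv}(W\rho^\vee)$; both hold in the divisible case as well.

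Your route buys a structural explanation of why $P$ factors through $\mathrm{Alt}$, closer in spirit to BBF's antisymmetrization. It also confines the generic-parameter input to one number. In particular, once $c$ is a scalar, Gauss's lemma becomes unnecessary. Since $\Omega^+$ has coefficient $1$ at $\theta_{\rho^\vee}$, $c$ is just the $\theta_{\rho^\vee}$-coefficient of $L(\theta_{\rho^\vee})\in\mathbb{C}[q][X]$. This also settles your unproved claim that $c$ is a Laurent polynomial in $q$; the generic Proposition \ref{prop:eKHe_sgn} alone only gives rational coefficients. The cost is several extra steps that the direct degree argument on $P(F)$ makes redundant.

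Two small points:
\begin{itemize}
\item The $q$-degree bound you flag comes from $\deg_q\mathcal{L}_{s_\alpha}(G)\le d(a)+\deg_q G$, summed along a reduced word. This gives $\deg_q\mathcal{L}_w(F)\le\sum_{a\in\Sigma^+\cap w^{-1}\Sigma^-}d(a)\le N$.
\item At $q=1$ the numerator of $\mathcal{G}_a$ vanishes identically. So no divided-difference term survives, and $\mathcal{L}_{s_\alpha}$ is simply $-s_\alpha$.
\end{itemize}
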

\begin{proof}
Fix $F\in\mathbb C[X]$ and regard $q$ as an indeterminate. Recall the $W$-invariant function $d:\Sigma\to\mathbb Z_{>0}$ defined by $q_a=q^{d(a)}$, with $d(\tilde\alpha)=d_\alpha$ for every $\alpha\in\Delta$. Replace $\mathcal H$ by the generic affine Hecke algebra over $\mathbb C(q)$ obtained from the Bernstein presentation by reading $q_a=q^{d(a)}$ and $q_{a*}$ as the corresponding power prescribed in Section \ref{subsec:bernstein}. The constants $c_1(a),c_2(a)$ are then integral powers of $q$, and the numerators of $\mathcal G_a$ lie in $\mathbb C[q][X]$. We retain the notation $\mathcal H$ for this generic algebra and put $\mathcal A=\mathbb C(q)[X]$.

After the faithful scalar extension $\mathbb{C}(q)\subseteq\mathbb{C}(v)$ with $q=v^2$, the Bernstein basis theorem \cite[Proposition 3.7]{Lus89} shows that $\{\theta_\lambda T_w\}$ is a $\mathbb C(v)$-basis. It is therefore already a $\mathbb C(q)$-basis. The proof of Lemma \ref{lem:HIK} applies, and the idempotents $e_K,e_{\mathrm{sgn}}$ are defined because $W(q)$ and $W(q^{-1})$ are invertible in $\mathbb C(q)$. Thus \eqref{eq:Lw_def} defines the operators $\mathcal L_w$ on $\mathbb C(q)[X]$ with the same formula \eqref{eq:Bernstein_esgn}. Lemma \ref{lem:H_e_sgn} and Proposition \ref{prop:eKHe_sgn} also apply over $\mathbb C(q)$. Indeed, Lemma \ref{lem:coprime} applies over this field, and the admissible constants are, up to sign, positive powers of $q$, so none is the inverse of another.

Let $L(F) = \sum_{w \in W} \mathcal{L}_w(F)$. Then $L(F)e_{\mathrm{sgn}} = W(q)e_{K}Fe_{\mathrm{sgn}} \in e_{K}\mathcal{H}e_{\mathrm{sgn}}$. By the generic version of Proposition \ref{prop:eKHe_sgn}, we can write
$$L(F) = P\Omega^{+}$$
for some $P=P(F) \in \mathbb{C}(q)[X]^{W}$.

Both $L(F)$ and $\Omega^+$ lie in $\mathbb C[q][X]$. The coefficient of $\theta_{\rho^\vee}$ in $\Omega^+$ is $1$, so $\Omega^+$ is primitive over $\mathbb C[q]$. Since it divides $L(F)$ in $\mathbb C(q)[X]$, Gauss's lemma for Laurent polynomial rings shows that
\[P\in\mathbb C[q][X]^W.\]

Put $N:=\sum_{a\in\Sigma^+}d(a)$. The parameter formulas of Section \ref{subsec:bernstein} show that $\deg_q f_a=d(a)$ and hence $\deg_q\Omega^+=N$. They also imply
$$\deg_q\mathcal L_{s_\alpha}(G)\leq d(a)+\deg_qG \qquad \bigl(G\in\mathbb C[q][X],\ a=\tilde\alpha\bigr),$$
because the two numerator constants of $\mathcal G_a$ have degree at most $d(a)$. If $w=s_{\alpha_1}\cdots s_{\alpha_r}$ is reduced and $a_i=\tilde\alpha_i$, the $W$-invariance of $d$ allows the estimate
$$\deg_q\mathcal L_w(F)\leq\sum_{i=1}^r d(a_i)
=\sum_{a\in\Sigma^+\cap w^{-1}\Sigma^-}d(a)\leq N.$$
Therefore $\deg_qL(F)\leq N$. Since $L(F)=P\Omega^+$ and $\deg_q\Omega^+=N$, the polynomial $P$ is independent of $q$.

Finally, we determine $P$ by specializing $q = 1$. Every parameter and every $c_i(a)$ then becomes $1$, so both numerator constants of $\mathcal{G}_a$ vanish, $\mathcal{L}_{s_{\alpha}} (F)\vert_{q = 1} = -s_{\alpha}(F)$, and $\mathcal{L}_{w} (F)\vert_{q = 1} = (-1)^{\ell(w)} w(F)$. Hence
$$L(F)\vert_{q = 1} = \mathrm{Alt}(F).$$
The factor $f_a$ of $\Omega^{+}$ becomes $1 - \theta_{-a^\vee}$ for $m_a = 1$ and $(1-\theta_{-a^\vee})(1+\theta_{-a^\vee}) = 1 - \theta_{-2a^\vee}$ for $m_a = 2$, so by Fact \ref{fact:denominator}
$$\Omega^{+}\vert_{q=1} = \theta_{\rho^\vee} \prod_{a \in \Sigma^+} \big(1 - \theta_{-m_a a^\vee}\big) = \mathrm{Alt}(\theta_{\rho^{\vee}}).$$
Therefore,
\[P = \frac{L(F)\vert_{q = 1}}{\Omega^{+}\vert_{q=1}} = \frac{\mathrm{Alt}(F)}{ \mathrm{Alt}(\theta_{\rho^{\vee}})}.\]
Substituting the definitions of $\Omega^+$ and $\mathrm{Alt}(\theta_{\rho^\vee})$ produces the first displayed identity of the theorem. Specializing the indeterminate $q$ to the residue cardinality yields this identity in the original algebra $\mathcal A$. Finally,
$$L(F)e_{\mathrm{sgn}}=\Bigl(\sum_{w\in W}T_w\Bigr)Fe_{\mathrm{sgn}}=W(q)e_KFe_{\mathrm{sgn}},$$
and division by $W(q)$ proves the second displayed identity.
\end{proof}

\section[\texorpdfstring{$K$}{K}-Invariants of the Gelfand--Graev Representation, Conductor \texorpdfstring{$\mathfrak{p}$}{p}]{\fontsize{15}{18}\selectfont \texorpdfstring{$K$}{K}-Invariants of the Gelfand--Graev Representation, Conductor \texorpdfstring{$\mathfrak{p}$}{p}}\label{sec:ggp}

Under the assumption $\rho^\vee \in X$, we have $X^{++} = \rho^\vee + X^{+}$. In this section, we show that $(\mathrm{ind}_{\overline{U}}^{G}\overline{\psi})^{K}$ is a free right $\mathcal{H}_{K}$-module of rank one, by projecting the free module $(\mathrm{ind}_{\overline{U}}^{G}\overline{\psi})^{I} \cong \mathcal{A}e_{\mathrm{sgn}}$ identified in Fact \ref{fact:CS}(2) onto $K$-invariants and computing the projection through the description of $e_K\mathcal{H}e_{\mathrm{sgn}}$ in Section \ref{sec:bbf}.

For $x\in X^{++}$, write
$$\phi_x:=\Phi^{\overline\psi}_x,$$
with the notation of Lemma \ref{lem:whittaker_support}(2). Thus $\phi_x$ is supported on $\overline U t_xK$ and satisfies $\phi_x(ut_xk)=\delta_{\overline B}(t_x)^{1/2}\overline\psi(u)$. By the same part, the functions $\{\phi_x\mid x\in X^{++}\}$ form a basis of $(\mathrm{ind}_{\overline U}^G\overline\psi)^K$.

\begin{lemma}\label{lem:Kt_x_I_TUbar}
Let $x \in X^{--}$ and $y \in X$. If $u \in \overline{U}$ satisfies $t_{y} u \in K t_{x} I$, then $x = y$ and $u \in \overline{U}(\mathfrak p)$.
\end{lemma}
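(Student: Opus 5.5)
First I will identify the double coset. The element $t_yu$ lies in $\overline{U}t_yK$, and $Kt_xI\subseteq Kt_xK$. Since $x\in X^{--}$, the Cartan decomposition gives $Kt_xK=Kt_{w_0x}K$ with $w_0x\in X^{++}$. My plan is to first show $y=x$, then pin down $u$.

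For $y=x$, I will use the Iwasawa decomposition $G=\bigsqcup\overline{U}t_\lambda K$ together with the standard fact that $\overline{U}t_\lambda K\cap Kt_\mu K\neq\emptyset$ forces $\lambda$ into the convex hull of $W\mu$, with the extreme weight occurring only in a controlled way. This alone does not determine $y$. I would rather work with $I$ directly, using $Kt_xI=\bigsqcup_{w\in W}Iwt_xI$. Because $x$ is antidominant, $\ell(wt_x)=\ell(w)+\ell(t_x)$, so $Iwt_xI=IwI\,t_xI$. Using the Iwahori factorization $I=U(\mathcal O)T(\mathcal O)\overline U(\mathfrak p)$ and the fact that conjugation by $t_x^{-1}$ contracts $\overline U(\mathfrak p)$ for antidominant $x$, I get $It_xI=U(\mathcal O)\,t_x\,T(\mathcal O)\overline U(\mathfrak p)$. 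A rearrangement then puts the whole coset $Kt_xI$ into the form $K\,t_x\,\overline{U}(\mathfrak p)$.

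Given this, I will write $t_yu=k\,t_x\,v$ with $k\in K$ and $v\in\overline U(\mathfrak p)$. Rearranging gives $k=t_yu v^{-1}t_x^{-1}=t_yt_x^{-1}\cdot(t_xuv^{-1}t_x^{-1})$, and $t_x u v^{-1}t_x^{-1}\in\overline U$. So $k\in T\overline U\cap K=\overline B\cap K=T(\mathcal O)\overline U(\mathcal O)$. By uniqueness of the $\overline B$ decomposition, $t_{y-x}\in T(\mathcal O)$, hence $y=x$. It also gives $t_xuv^{-1}t_x^{-1}\in\overline U(\mathcal O)$, so $uv^{-1}\in t_x^{-1}\overline U(\mathcal O)t_x$.

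Finally, I use \eqref{eq:root_coordinate_shift} and direct spanning. Conjugation by $t_x^{-1}$ scales the coordinate of $U_{-\beta}$ by $\varpi^{-\langle\beta,x\rangle}$, and $\langle\beta,x\rangle\le -1$ for $\beta>0$. Hence $t_x^{-1}\overline U(\mathcal O)t_x\subseteq\overline U(\mathfrak p)$, working filtration-wise on each $U_{-\beta}/U_{-2\beta}$ and on $U_{-2\beta}$. This gives $u\in\overline U(\mathfrak p)v=\overline U(\mathfrak p)$.

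The main obstacle is the claim $Kt_xI=Kt_x\overline U(\mathfrak p)$. I would prove it by writing $I=\overline U(\mathfrak p)T(\mathcal O)U(\mathcal O)$ in the reverse order, which is also available from the direct spanning statements. Then $t_xI=t_x\overline U(\mathfrak p)\,T(\mathcal O)U(\mathcal O)$, and $t_x T(\mathcal O)U(\mathcal O)t_x^{-1}\subseteq K$ because antidominant $x$ contracts $U$ under conjugation by $t_x$. This last step is not immediate, since $t_x\overline U(\mathfrak p)$ sits to the left of $T(\mathcal O)U(\mathcal O)$. A cleaner route is to avoid the coset identity altogether: from $t_yu\in Kt_xI$, write $t_yu=k t_x i$ and factor $i=u_+\tau\bar v$ with $u_+\in U(\mathcal O)$, $\tau\in T(\mathcal O)$ and $\bar v\in\overline U(\mathfrak p)$. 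Then $t_xu_+\tau t_x^{-1}\in K$ by contraction, which can be absorbed into $k$, giving $t_yu=k't_x\bar v$. From there the argument proceeds exactly as above.
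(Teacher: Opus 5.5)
Your final argument is correct and is essentially the paper's proof. You factor $i=u_+\tau\bar v$ using $I=U(\mathcal O)T(\mathcal O)\overline U(\mathfrak p)$, absorb $t_xu_+\tau t_x^{-1}\in K$ to get $t_yu=k't_x\bar v$, read off $k'\in\overline B\cap K=T(\mathcal O)\overline U(\mathcal O)$, and use $t_x^{-1}\overline U(\mathcal O)t_x\subseteq\overline U(\mathfrak p)$. One aside is wrong but harmless: under the convention $t_\lambda=\lambda(\varpi^{-1})$, the claim $\ell(wt_x)=\ell(w)+\ell(t_x)$ for antidominant $x$ is false. For $\mathrm{GL}_2$ with $t_x=\mathrm{diag}(1,\varpi^{-1})$, the element $st_x$ normalizes $I$; the length-additive product is $t_xw$. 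Nothing depends on this, since $Kt_xI=K\cdot It_xI$ already gives the reduction.
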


\begin{proof}
We follow the geometric implication in the proof of \cite[Lemma 5.3]{GK22}, translated to our opposite-Borel conventions. Since $x$ is anti-dominant, $t_{x} U(\mathcal O) t_{x}^{-1} \subseteq U(\mathcal O) \subseteq K$, so the Iwahori factorization $I = U(\mathcal O)T(\mathcal O)\overline{U}(\mathfrak p)$ reduces the double coset to $K t_{x} I = K t_{x} \overline{U}(\mathfrak p)$. Write $t_{y}u = kt_{x}u'$ with $k \in K$ and $u' \in \overline{U}(\mathfrak p)$. Then
$$k^{-1} = t_{x}u'u^{-1}t_{y}^{-1} = \big(t_{x}t_{y}^{-1}\big)\big(t_{y}u'u^{-1}t_{y}^{-1}\big) \in T(\mathcal O)\overline{U}(\mathcal O).$$
The first factor forces $x = y$. The second, together with the strict anti-dominance of $x$, implies $u'u^{-1} \in t_{x}^{-1}\overline{U}(\mathcal O)t_{x} \subseteq \overline{U}(\mathfrak p)$, so $u \in \overline{U}(\mathfrak p)$.
\end{proof}

The next proposition is essentially the twisted Satake transform in \cite{Gu13, GK22}.

\begin{proposition}\label{prop:twisted_satake}
For $x \in X^{++}$,
    $$e_{K}\cdot(\theta_{-x}\cdot \mathrm{ch}_{I}^{\overline{\psi}}) = \frac{q_{w_0}}{W(q)}\,\phi_{x}. $$
\end{proposition}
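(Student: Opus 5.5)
The plan is to compute the left-hand side as an explicit function of $G$. By Lemma \ref{lem:whittaker_support}, a $K$-invariant vector in $\mathrm{ind}_{\overline U}^G\overline\psi$ is determined by its values at the points $t_y$, $y\in X$. So it suffices to show that $F:=e_K\cdot(\theta_{-x}\cdot\mathrm{ch}_I^{\overline\psi})$ satisfies
\[
F(t_y)=0 \quad (y\neq x), \qquad F(t_x)=\frac{q_{w_0}}{W(q)}\,\delta_{\overline B}(t_x)^{1/2}.
\]

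First I rewrite the Hecke element. Since $-x\in X^{--}$, we have $\theta_{-x}=q_{t_{-x}}^{-1/2}T_{t_{-x}}$, and $q_{t_{-x}}=\delta_{\overline B}(t_{-x})=q^{\langle 2\rho,x\rangle}$. Hence $q_{t_{-x}}^{-1/2}=\delta_{\overline B}(t_x)^{1/2}$, which is exactly the normalization of $\phi_x$. Because module actions compose with convolution, $e_K\cdot(\theta_{-x}\cdot\mathrm{ch})=(e_K*\theta_{-x})\cdot\mathrm{ch}$.

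The function $\mathbf 1_K*\mathbf 1_{It_{-x}I}$ is left $K$-invariant, right $I$-invariant, and supported on $Kt_{-x}I$. From the proof of Lemma \ref{lem:Kt_x_I_TUbar}, $Kt_{-x}I=Kt_{-x}\overline U(\mathfrak p)$, so this support is a single $(K,I)$ double coset. Therefore $\mathbf 1_K*\mathbf 1_{It_{-x}I}=c\,\mathbf 1_{Kt_{-x}I}$, and integrating both sides gives
\[
c=\frac{\mathrm{vol}(K)\,\mathrm{vol}(It_{-x}I)}{\mathrm{vol}(Kt_{-x}I)}=\frac{W(q)\,q_{t_{-x}}}{\mathrm{vol}(Kt_{-x}I)}.
\]
I expect the main bookkeeping obstacle to be computing $\mathrm{vol}(Kt_{-x}I)=W(q)\,q_{t_{-x}}/q_{w_0}$, which gives $c=q_{w_0}$. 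This is where the factor $q_{w_0}=[\overline U(\mathcal O):\overline U(\mathfrak p)]$ enters. To get it, I would count $I$-cosets in $Kt_{-x}\overline U(\mathfrak p)$ as $[K:K\cap t_{-x}It_{-x}^{-1}]$. I would then use the Iwahori factorization and strict anti-dominance: $t_{-x}It_{-x}^{-1}\cap K$ contains $U(\mathcal O)T(\mathcal O)$, and its $\overline U$-part is $t_{-x}\overline U(\mathfrak p)t_{-x}^{-1}$, whose index in $\overline U(\mathfrak p)$ is $q_{t_{-x}}$. This makes the index $[K:I]\,[\overline U(\mathcal O):\overline U(\mathfrak p)]^{-1}q_{t_{-x}}\cdot$ (a correction I would check via direct spanning). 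If the resulting convention instead yields length additivity of $w\,t_{-x}$, I would cross-check the constant that way as well.

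Next I evaluate the function. We have
\[
F(t_y)=\frac{q_{w_0}\,\delta_{\overline B}(t_x)^{1/2}}{W(q)}\int_{Kt_{-x}I}\mathrm{ch}_I^{\overline\psi}(t_yh)\,dh .
\]
The integrand is non-zero only if $t_yh\in\overline U I$, that is, $h\in t_{-y}uI$ for some $u\in\overline U$. Since also $h\in Kt_{-x}I$, we get $t_{-y}u\in Kt_{-x}I$. Lemma \ref{lem:Kt_x_I_TUbar}, applied with $-x\in X^{--}$, then forces $y=x$ and $u\in\overline U(\mathfrak p)\subseteq I$. So for $y=x$ the integration set is exactly $t_{-x}I$, which has volume $1$, and on it $\mathrm{ch}_I^{\overline\psi}(t_xh)=\mathrm{ch}_I^{\overline\psi}(i)=1$. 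Hence $F(t_x)=\frac{q_{w_0}}{W(q)}\delta_{\overline B}(t_x)^{1/2}$ and $F(t_y)=0$ for $y\ne x$.

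Finally, $F$ is $K$-invariant because $e_K$ is the $K$-averaging idempotent, and it lies in the compact induction. Lemma \ref{lem:whittaker_support}(2) therefore identifies $F=\frac{q_{w_0}}{W(q)}\phi_x$.
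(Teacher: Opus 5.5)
Your overall route is the paper's. You write $e_K*\theta_{-x}$ as a multiple of $\mathbf 1_{Kt_{-x}I}$, then use Lemma \ref{lem:Kt_x_I_TUbar} to see that the integral survives only for $y=x$, where it equals $\mathrm{vol}(t_{-x}I)=1$. That support computation, the normalization $q_{t_{-x}}^{-1/2}=\delta_{\overline B}(t_x)^{1/2}$, and the final identification via Lemma \ref{lem:whittaker_support}(2) are all correct.

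\textbf{The gap} is the one step that produces the constant $q_{w_0}$: your sketch of $\mathrm{vol}(Kt_{-x}I)$ reverses the conjugation directions. By \eqref{eq:root_coordinate_shift}, $t_{-x}x_{[\beta]}(c)t_x=x_{[\beta]}(\varpi^{\langle\beta,x\rangle}c)$. So for strictly dominant $x$, conjugation by $t_{-x}$ contracts $U(\mathcal O)$ to a subgroup of index $q_{t_{-x}}$, and expands $\overline U(\mathfrak p)$ to a group containing $\overline U(\mathcal O)$. Hence $K\cap t_{-x}It_x$ does not contain $U(\mathcal O)$, and $t_{-x}\overline U(\mathfrak p)t_x$ is not a subgroup of $\overline U(\mathfrak p)$.

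Carried out as you describe, the computation yields a subgroup of $I$ of index $q_{t_{-x}}$. That gives $\mathrm{vol}(Kt_{-x}I)=W(q)\,q_{t_{-x}}$ and $c=1$. The factor $[\overline U(\mathcal O):\overline U(\mathfrak p)]^{-1}$ you write down has no source in the argument, and the ``correction'' stands in for the missing step. Your proposed cross-check has the same problem: length additivity of $w\,t_{-x}$ would again give $c=1$. In fact $\ell(wt_{-x})=\ell(t_{-x})-\ell(w)$ for $w\in W$ and strictly anti-dominant $-x$.

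\textbf{The fix.} With the directions corrected, $K\cap t_{-x}It_x=(t_{-x}U(\mathcal O)t_x)\,T(\mathcal O)\,\overline U(\mathcal O)$. Compare this group and $I$ with their common subgroup $(t_{-x}U(\mathcal O)t_x)\,T(\mathcal O)\,\overline U(\mathfrak p)$. The two indices are $q_{w_0}$ and $q_{t_{-x}}$, so the intersection has volume $q_{w_0}/q_{t_{-x}}$. Therefore $\mathrm{vol}(Kt_{-x}I)=W(q)\,q_{t_{-x}}/q_{w_0}$ and $c=q_{w_0}$, as you wanted. Equivalently, $\mathrm{vol}(Kt_{-x}I)=\sum_{w\in W}q_{t_{-x}}q_w^{-1}=q_{t_{-x}}W(q^{-1})$.

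The paper avoids the total-mass bookkeeping by evaluating the convolution at $t_{-x}$, so that $c=\mathrm{vol}(K\cap t_{-x}It_xI)$. Since $It_xI=\overline U(\mathfrak p)T(\mathcal O)t_xI$, one has $t_{-x}It_xI=(t_{-x}\overline U(\mathfrak p)t_x)\,T(\mathcal O)\,I$. Its intersection with $K$ is $\overline U(\mathcal O)I$, of volume $[\overline U(\mathcal O):\overline U(\mathfrak p)]=q_{w_0}$.
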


\begin{proof}
Here $\theta_{-x} = q_{t_{-x}}^{-1/2}\,\mathbf{1}_{It_{-x}I} = \delta_{\overline{B}}(t_{x})^{1/2}\,\mathbf{1}_{It_{-x}I}$ and $e_{K} = \frac{1}{W(q)}\mathbf{1}_{K}$, so
\begin{equation}\label{eq:eK_theta_x}
e_{K} * \theta_{-x} = \frac{1}{W(q)}\,\delta_{\overline{B}}(t_{x})^{1/2}\mathbf{1}_K * \mathbf{1}_{It_{-x}I} = \frac{q_{w_0}}{W(q)}\,\delta_{\overline{B}}(t_{x})^{1/2} \mathbf{1}_{Kt_{-x}I}.
\end{equation}
Indeed, $\mathbf{1}_K * \mathbf{1}_{It_{-x}I}$ equals $\mathrm{vol}\big(K \cap t_{-x}\, I t_{x} I\big)\,\mathbf{1}_{Kt_{-x}I}$. The Iwahori factorization and the dominance of $x$ imply $I t_x I = \overline{U}(\mathfrak{p})\, T(\mathcal{O})\, t_x\, I$, hence
$$t_{-x}\, I t_x I = \big(t_{-x}\overline{U}(\mathfrak{p})\, t_{x}\big)\, T(\mathcal{O})\, I.$$
Using the strict dominance of $x$, direct spanning, and conjugation on the affine root groups, one sees that $t_{-x}\overline U(\mathfrak p)t_x\supseteq\overline U(\mathcal O)$ and that its intersection with $K$ is exactly $\overline U(\mathcal O)$. Consequently, $K \cap t_{-x}It_xI = \overline{U}(\mathcal{O})\,I$, of volume $[\overline{U}(\mathcal{O}) : \overline{U}(\mathfrak{p})] = q_{w_0}$.

Both sides of the proposition lie in $(\mathrm{ind}_{\overline{U}}^{G}\overline{\psi})^{K}$, so they are determined by their values at $t_y$ for $y\in X$. By \eqref{eq:eK_theta_x},
$$e_{K}\cdot(\theta_{-x}\cdot \mathrm{ch}_{I}^{\overline{\psi}})(t_{y}) = \frac{q_{w_0}\,\delta_{\overline{B}}(t_{x})^{1/2}}{W(q)}\int_{Kt_{-x}I}\mathrm{ch}_{I}^{\overline{\psi}}(t_{y}h)\,dh.$$
The integrand vanishes unless $h = t_{-y}\,u\,i$ with $u \in \overline{U}$ and $i \in I$. Then $t_{-y}u \in Kt_{-x}I$, and Lemma \ref{lem:Kt_x_I_TUbar}, applied to $-x \in X^{--}$, forces $y = x$ and $u \in \overline{U}(\mathfrak p)$. For $y = x$, the integrand equals $1$ exactly on $t_{-x}I \subseteq Kt_{-x}I$, so the integral is $\mathrm{vol}(t_{-x}I) = 1$ and
\[e_{K}\cdot(\theta_{-x}\cdot \mathrm{ch}_{I}^{\overline{\psi}})(t_{x}) = \frac{q_{w_0}}{W(q)}\,\delta_{\overline{B}}(t_{x})^{1/2} = \frac{q_{w_0}}{W(q)}\,\phi_{x}(t_{x}). \qedhere\]
\end{proof}

For $x \in X^+$, write
\[ h_x := (\mathrm{ch}V_x)e_K \in \mathcal{H}_K \]
for the image under the Satake isomorphism of $\mathrm{ch}V_x \in \mathcal{A}^W$ (Fact \ref{fact:chV}(2)).

\begin{theorem}\label{thm:chVx_action_right}
For $x \in X^+$,
\[ \phi_{\rho^\vee} * h_x = \phi_{x + \rho^\vee}. \]
\end{theorem}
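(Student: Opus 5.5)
The plan is to transport everything into the free module $\mathcal{A}e_{\mathrm{sgn}}$ via Fact \ref{fact:CS}(2), applied to the conductor-$\mathfrak p$ character $\overline\psi$. There, Proposition \ref{prop:twisted_satake} together with Theorem \ref{thm:bbf_generalized} gives an explicit $\mathcal{A}$-coordinate for every $\phi_x$, and the identity then reduces to multiplication by a central element.

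\textbf{Step 1: coordinates of the basis vectors.} For $x\in X^{++}$, Proposition \ref{prop:twisted_satake} gives $\phi_x=\frac{W(q)}{q_{w_0}}\,e_K\theta_{-x}\cdot\mathrm{ch}_I^{\overline\psi}$. Under the left $\mathcal H$-module isomorphism $Fe_{\mathrm{sgn}}\mapsto F\cdot\mathrm{ch}_I^{\overline\psi}$ of Fact \ref{fact:CS}(2), $\phi_x$ therefore corresponds to $\frac{W(q)}{q_{w_0}}e_K\theta_{-x}e_{\mathrm{sgn}}$. By Theorem \ref{thm:bbf_generalized} this equals
\[
\frac{1}{q_{w_0}}\cdot\frac{\mathrm{Alt}(\theta_{-x})}{\mathrm{Alt}(\theta_{\rho^\vee})}\,\Omega^+e_{\mathrm{sgn}}.
\]
Now write $x=\rho^\vee+y$ with $y\in X^+$, which is possible since $X^{++}=\rho^\vee+X^+$. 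Using $-w_0\rho^\vee=\rho^\vee$ and $\mathrm{Alt}(w_0F)=(-1)^{\ell(w_0)}\mathrm{Alt}(F)$, we get $\mathrm{Alt}(\theta_{-\rho^\vee-y})=(-1)^{\ell(w_0)}\mathrm{Alt}(\theta_{\rho^\vee+y^*})$, where $y^*:=-w_0y\in X^+$. Hence
\[
\phi_{\rho^\vee+y}\ \longleftrightarrow\ \frac{(-1)^{\ell(w_0)}}{q_{w_0}}\,\mathrm{ch}V_{y^*}\,\Omega^+e_{\mathrm{sgn}}.
\]
The case $y=0$ gives $\phi_{\rho^\vee}\leftrightarrow\frac{(-1)^{\ell(w_0)}}{q_{w_0}}\Omega^+e_{\mathrm{sgn}}$.

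\textbf{Step 2: converting the right action into a left action.} By the convolution conventions of Section \ref{sec:notation}, $\phi*h=\check h\cdot\phi$. Equation \eqref{eq:anti_involution_spherical} gives $\check h_x=\overline{\mathrm{ch}V_x}\,e_K$. I would then check that $\overline{\mathrm{ch}V_x}=\mathrm{ch}V_{x^*}$: the involution $\theta_\lambda\mapsto\theta_{-\lambda}$ agrees with $w_0$ on $\mathcal A^W$, and applying it to the Weyl character formula sends $\rho^\vee+x$ to $w_0(\rho^\vee+x^*)$ up to the same sign in numerator and denominator. Since $\phi_{\rho^\vee}$ is $K$-invariant, $e_K$ acts trivially on it. Therefore
\[
\phi_{\rho^\vee}*h_x=\mathrm{ch}V_{x^*}\cdot\phi_{\rho^\vee}.
\]

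\textbf{Step 3: conclusion.} The element $\mathrm{ch}V_{x^*}\in\mathcal A\subseteq\mathcal H$, and the isomorphism of Fact \ref{fact:CS}(2) is $\mathcal H$-linear. So $\mathrm{ch}V_{x^*}\cdot\phi_{\rho^\vee}$ corresponds to
\[
\frac{(-1)^{\ell(w_0)}}{q_{w_0}}\,\mathrm{ch}V_{x^*}\,\Omega^+e_{\mathrm{sgn}}.
\]
By Step 1 with $y=x$, this is exactly the coordinate of $\phi_{\rho^\vee+x}$, since $(x^*)^*$ is not needed: the formula there already produces $\mathrm{ch}V_{x^*}$.

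The main point requiring care is the bookkeeping in Steps 1 and 2. The two appearances of the duality $x\mapsto -w_0x$ must match: one comes from $\theta_{-x}$ in the twisted Satake transform, the other from the anti-involution $\check{\ }$. Likewise, the signs and the scalar $q_{w_0}^{-1}$ must cancel consistently. I would verify these first in rank one before writing the general argument. Everything else is a direct invocation of the earlier results.
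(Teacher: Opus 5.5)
Your proposal is correct and follows the paper's proof essentially step by step. Proposition~\ref{prop:twisted_satake} and Theorem~\ref{thm:bbf_generalized} give $\phi_y$ the coordinate $\frac{1}{q_{w_0}}\frac{\mathrm{Alt}(\theta_{-y})}{\mathrm{Alt}(\theta_{\rho^\vee})}\Omega^+e_{\mathrm{sgn}}$ in $\mathcal A e_{\mathrm{sgn}}$, and with $F=\mathrm{ch}V_x$ one has $\phi_{\rho^\vee}*h_x=(\overline{F}e_K)\cdot\phi_{\rho^\vee}=\overline{F}\cdot\phi_{\rho^\vee}$ by \eqref{eq:anti_involution_spherical} and $K$-invariance. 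The only difference is your detour through $x^*=-w_0x$ and the signs $(-1)^{\ell(w_0)}$, which the paper avoids by applying $\theta_\lambda\mapsto\theta_{-\lambda}$ directly to $F\,\mathrm{Alt}(\theta_{\rho^\vee})=\mathrm{Alt}(\theta_{x+\rho^\vee})$. Also, on $\mathcal A^W$ this inversion agrees with $\theta_\lambda\mapsto\theta_{-w_0\lambda}$, not with $w_0$ (which acts trivially there), though the computation you actually carry out is right.
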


\begin{proof}
Set $F = \mathrm{ch}V_x \in \mathcal{A}^W$, so that $h_x=Fe_K$. For $y \in X^{++}$, Proposition \ref{prop:twisted_satake}, the identity $e_{\mathrm{sgn}} \cdot \mathrm{ch}_I^{\overline{\psi}} = \mathrm{ch}_I^{\overline{\psi}}$ in Fact \ref{fact:CS}(1), and Theorem \ref{thm:bbf_generalized} applied to $\theta_{-y}$ combine to produce
\begin{equation}\label{eq:phi_via_bbf}
\begin{aligned}
\phi_y
&= \frac{W(q)}{q_{w_0}}\,\big(e_K\,\theta_{-y}\,e_{\mathrm{sgn}}\big)\cdot\mathrm{ch}_I^{\overline{\psi}}\\
&= \frac{1}{q_{w_0}}\,
\frac{\mathrm{Alt}(\theta_{-y})}{\mathrm{Alt}(\theta_{\rho^\vee})}\,
\Omega^+e_{\mathrm{sgn}}\cdot\mathrm{ch}_I^{\overline{\psi}}.
\end{aligned}
\end{equation}
Applying $\theta_\lambda \mapsto \theta_{-\lambda}$ to the identity
$F\,\mathrm{Alt}(\theta_{\rho^\vee})=\mathrm{Alt}(\theta_{x+\rho^\vee})$ in Fact \ref{fact:chV}(1), we obtain
\[
\bar F\,\mathrm{Alt}(\theta_{-\rho^\vee})
=\mathrm{Alt}(\theta_{-(x+\rho^\vee)}).
\]
Both $\rho^\vee$ and $x+\rho^\vee$ lie in $X^{++}$. The function $\phi_{\rho^\vee}$ is $K$-invariant, so $e_K\cdot\phi_{\rho^\vee}=\phi_{\rho^\vee}$. Hence the right-convolution convention, \eqref{eq:anti_involution_spherical}, and \eqref{eq:phi_via_bbf}, applied at $y=\rho^\vee$ and $y=x+\rho^\vee$, show that
\begin{align*}
\phi_{\rho^\vee}*h_x
&=(Fe_K)\check{\vphantom{f}}\cdot\phi_{\rho^\vee}\\
&=(\bar Fe_K)\cdot\phi_{\rho^\vee}\\
&=\bar F\cdot\phi_{\rho^\vee}\\
&=\frac{1}{q_{w_0}}\,
\frac{\bar F\,\mathrm{Alt}(\theta_{-\rho^\vee})}
{\mathrm{Alt}(\theta_{\rho^\vee})}\,
\Omega^+e_{\mathrm{sgn}}\cdot\mathrm{ch}_I^{\overline{\psi}}\\
&=\frac{1}{q_{w_0}}\,
\frac{\mathrm{Alt}(\theta_{-(x+\rho^\vee)})}
{\mathrm{Alt}(\theta_{\rho^\vee})}\,
\Omega^+e_{\mathrm{sgn}}\cdot\mathrm{ch}_I^{\overline{\psi}}\\
&=\phi_{x+\rho^\vee}. \qedhere
\end{align*}
\end{proof}

\begin{corollary}\label{cor:freeness_p}
Under right convolution, $(\mathrm{ind}_{\overline{U}}^{G}\overline{\psi})^{K}$ is a free right $\mathcal{H}_K$-module of rank one generated by $\phi_{\rho^\vee}$.
\end{corollary}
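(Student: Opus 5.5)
The proof combines Theorem \ref{thm:chVx_action_right} with the two bases already in hand: the basis $\{\phi_x\mid x\in X^{++}\}$ of $(\mathrm{ind}_{\overline U}^G\overline\psi)^K$ from Lemma \ref{lem:whittaker_support}(2), and the basis $\{h_x\mid x\in X^+\}$ of $\mathcal H_K$.

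First, $(\mathrm{ind}_{\overline U}^G\overline\psi)^K$ is a right $\mathcal H_K$-module under right convolution. If $f$ is $K$-invariant on the right and $h\in\mathcal H_K$, then $f*h=\check h\cdot f$ is again $K$-invariant and lies in the compact induction. Associativity of convolution makes this a right module structure.

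Next, since $\{\mathrm{ch}V_x\mid x\in X^+\}$ is a $\mathbb C$-basis of $\mathcal A^W$ by Fact \ref{fact:chV}(2), and $F\mapsto Fe_K$ is a linear isomorphism $\mathcal A^W\to\mathcal H_K$ by Theorem \ref{thm:spherical_iso}, the elements $h_x=(\mathrm{ch}V_x)e_K$ form a $\mathbb C$-basis of $\mathcal H_K$.

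Now consider the $\mathcal H_K$-linear map
$$\mathcal H_K\to(\mathrm{ind}_{\overline U}^G\overline\psi)^K,\qquad h\mapsto\phi_{\rho^\vee}*h.$$
Theorem \ref{thm:chVx_action_right} says this map sends the basis element $h_x$ to $\phi_{x+\rho^\vee}$. Under the standing assumption $\rho^\vee\in X$, we have $X^{++}=\rho^\vee+X^+$: a cocharacter $y$ satisfies $\langle\alpha,y\rangle\ge1$ for all $\alpha\in\Delta$ exactly when $\langle\alpha,y-\rho^\vee\rangle\ge0$, by \eqref{eq:rho_pairings}. So $x\mapsto x+\rho^\vee$ is a bijection $X^+\to X^{++}$, and the map carries one basis bijectively onto the other. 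It is therefore a linear isomorphism, hence an isomorphism of right $\mathcal H_K$-modules. This proves that the module is free of rank one, generated by $\phi_{\rho^\vee}$.

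All the substantive input is in Theorem \ref{thm:chVx_action_right}, so no step here is a real obstacle. The one point that needs care is the bijection $X^+\to X^{++}$, which is exactly where the hypothesis $\rho^\vee\in X$ and the pairings \eqref{eq:rho_pairings} are used.
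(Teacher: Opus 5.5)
Your proposal is correct and matches the paper's proof: both take the basis $\{h_x\mid x\in X^+\}$ of $\mathcal H_K$ (Fact \ref{fact:chV}(2) with Theorem \ref{thm:spherical_iso}) and the basis $\{\phi_y\mid y\in X^{++}\}=\{\phi_{x+\rho^\vee}\mid x\in X^+\}$ from Lemma \ref{lem:whittaker_support}(2), and then apply Theorem \ref{thm:chVx_action_right}. Your explicit justification of $X^{++}=\rho^\vee+X^+$ through \eqref{eq:rho_pairings}, and of the right-module structure, makes explicit what the paper leaves implicit.
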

\begin{proof}
By Fact \ref{fact:chV}(2) and Theorem \ref{thm:spherical_iso}, $\{h_x\mid x\in X^+\}$ is a basis of $\mathcal H_K$. By Lemma \ref{lem:whittaker_support}(2), $\{\phi_{x+\rho^\vee}\mid x\in X^+\}=\{\phi_y\mid y\in X^{++}\}$ is a basis of $(\mathrm{ind}_{\overline U}^G\overline\psi)^K$. The statement follows from Theorem \ref{thm:chVx_action_right}.
\end{proof}

\section{Translation to Conductor \texorpdfstring{$\mathcal{O}$}{O}}\label{sec:conductorO}

We keep the assumption $\rho^\vee \in X$ and transfer the results of Section \ref{sec:ggp} to the character $\overline{\omega}$ of conductor $\mathcal{O}$. Those results hold for any character of conductor $\mathfrak{p}$ in place of $\overline{\psi}$, so we now choose $\psi$ compatibly with $\omega$.

\begin{lemma}\label{lem:conductor_shift}
Define $\psi(u) := \omega(t_{-\rho^\vee}\, u\, t_{\rho^\vee})$ for $u \in \overline{U}$. Then $\psi$ is a character of $\overline{U}$ of conductor $\mathfrak{p}$.
\end{lemma}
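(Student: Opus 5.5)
Conjugation by $t_{-\rho^\vee}$ is an automorphism of $\overline U$, so $\psi$ is automatically a smooth character of $\overline U$. It remains to check the two conditions in the definition of conductor $\mathfrak p$: triviality on $\overline U(\mathfrak p)$, and non-triviality of each $\psi_\alpha$ on $\mathcal O_{E_\alpha}$. Both checks will be read off from the coordinate shift \eqref{eq:root_coordinate_shift} together with the pairings $\langle\alpha,\rho^\vee\rangle=1$ for $\alpha\in\Delta$ from \eqref{eq:rho_pairings}.

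For the simple-root components, I take $c\in E_\alpha$ and apply \eqref{eq:root_coordinate_shift} with $\beta=-\alpha$ and $\lambda=-\rho^\vee$. Since $\langle-\alpha,-\rho^\vee\rangle=1$, this gives
\[
t_{-\rho^\vee}x_{[-\alpha]}(c)t_{\rho^\vee}=x_{[-\alpha]}(\varpi^{-1}c).
\]
Consequently $\psi_\alpha(c)=\omega_\alpha(\varpi^{-1}c)$. Since $\omega_\alpha$ is non-trivial on $\mathfrak p_{E_\alpha}^{-1}=\varpi^{-1}\mathcal O_{E_\alpha}$ ($E_\alpha/F$ being unramified), $\psi_\alpha$ is non-trivial on $\mathcal O_{E_\alpha}$.

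For triviality on $\overline U(\mathfrak p)=\prod_\beta U_{-\beta,0+}$, I would use that $\omega$ factors through $\overline U/[\overline U,\overline U]\cong\prod_{\alpha\in\Delta}U_{-\alpha}/U_{-2\alpha}$. The conjugate $t_{-\rho^\vee}\overline U(\mathfrak p)t_{\rho^\vee}$ is a subgroup of $\overline U$, and its image in the simple quotient $U_{-\alpha}/U_{-2\alpha}$ is the shifted image of $U_{-\alpha,0+}$. The projection is a homomorphism compatible with conjugation by $T$, and the factors for non-simple $\beta$ map trivially to this quotient. That image is $x_{[-\alpha]}(\varpi^{-1}\mathfrak p_{E_\alpha})=x_{[-\alpha]}(\mathcal O_{E_\alpha})$. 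Since $\omega$ is trivial on $\overline U(\mathcal O)$, whose image in that quotient is $x_{[-\alpha]}(\mathcal O_{E_\alpha})$ by the normalization \cite[Lemma 9.6.3]{KP}, each $\omega_\alpha$ is trivial on $\mathcal O_{E_\alpha}$. Hence $\psi$ is trivial on $\overline U(\mathfrak p)$.

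The main point needing care is this last step, because $\overline U(\mathfrak p)$ is not abelian. We cannot conclude merely from the conjugate landing inside $\overline U(\mathcal O)$: for non-simple $\beta$ one has $\langle\beta,\rho^\vee\rangle\ge2$, and the conjugate does land in $\overline U(\mathcal O)$, but we do not need that. It suffices to note that $\omega$ only sees the simple abelianized coordinates, which reduces everything to the computation in the previous paragraph. This is the same mechanism used in the proof of Lemma \ref{lem:whittaker_support}(2).
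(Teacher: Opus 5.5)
Your proof is correct and follows the paper's argument: the key step is $\psi_\alpha(c)=\omega_\alpha(\varpi^{-1}c)$, obtained from \eqref{eq:root_coordinate_shift} and $\langle\alpha,\rho^\vee\rangle=1$, and you make explicit the reduction of triviality on $\overline U(\mathfrak p)$ to the simple-root quotients, which the paper leaves implicit. One correction to your closing aside: for a non-simple $\beta$ of height $h=\langle\beta,\rho^\vee\rangle\ge2$, conjugation by $t_{-\rho^\vee}$ sends $U_{-\beta,0+}$ to elements whose coordinate lies in $\varpi^{-h}\mathfrak p_{E_\beta}=\mathfrak p_{E_\beta}^{1-h}\not\subseteq\mathcal O_{E_\beta}$, so the conjugate does \emph{not} lie in $\overline U(\mathcal O)$; this is exactly why the factorization of $\omega$ through $\overline U/[\overline U,\overline U]$ is needed, rather than merely convenient.
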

\begin{proof}
For $\alpha\in\Delta$, \eqref{eq:root_coordinate_shift} shows that conjugation by $t_{-\rho^\vee}$ acts on the $x_{[-\alpha]}$-coordinate of $U_{-\alpha}/U_{-2\alpha}$ as multiplication by $\varpi^{-\langle\alpha,\rho^\vee\rangle} = \varpi^{-1}$, so
$$\psi_\alpha(c) = \omega_\alpha\big(\varpi^{-1}c\big), \qquad c \in E_\alpha.$$
Since $\omega_\alpha$ has conductor $\mathcal{O}_{E_\alpha}$, the character $\psi_\alpha$ has conductor $\mathfrak{p}_{E_\alpha}$.
\end{proof}

For $f \in \mathrm{ind}_{\overline{U}}^{G}\,\overline{\psi}$, $u \in \overline{U}$, and $g \in G$,
\begin{equation*}
f\big(t_{\rho^\vee}\, u\, g\big) = \overline{\psi}\big(t_{\rho^\vee}\, u\, t_{-\rho^\vee}\big)\, f\big(t_{\rho^\vee}\, g\big) = \overline{\omega}(u)\, f\big(t_{\rho^\vee}\, g\big).
\end{equation*}
Left translation by $t_{\rho^\vee}$ therefore gives the following proposition.

\begin{proposition}\label{prop:translation_iso}
Define $\mathcal{L} : \mathrm{ind}_{\overline{U}}^{G}\, \overline{\psi} \to \mathrm{ind}_{\overline{U}}^{G}\, \overline{\omega}$ by $(\mathcal{L}f)(g) := f(t_{\rho^\vee}\, g)$. Then $\mathcal{L}$ is an isomorphism onto $\mathrm{ind}_{\overline{U}}^{G}\, \overline{\omega}$ commuting with right translation by $G$. In particular, $\mathcal{L}$ restricts to an isomorphism of $K$-invariants which intertwines both the left $\mathcal{H}_K$-action and the right convolution action.
\end{proposition}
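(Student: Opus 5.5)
The argument is a direct verification from the displayed transformation rule, in four steps: well-definedness, $G$-equivariance, bijectivity, and the restriction to $K$-invariants.

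\emph{Well-definedness.} Let $f\in\mathrm{ind}_{\overline U}^G\overline\psi$. The computation preceding the statement gives $(\mathcal Lf)(ug)=\overline\omega(u)(\mathcal Lf)(g)$. This uses two facts. First, $t_{\rho^\vee}$ normalizes $\overline U$, since $T$ normalizes $\overline U$. Second, by Lemma \ref{lem:conductor_shift},
$$\overline\psi(t_{\rho^\vee}ut_{-\rho^\vee})=\overline{\omega(u)}=\overline\omega(u).$$
Smoothness is preserved: if $f$ is right-invariant under an open compact $K'$, then so is $\mathcal Lf$, because $\mathcal L$ acts on the left.

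\emph{Compact support modulo $\overline U$.} If $\operatorname{supp}f\subseteq\overline U C$ with $C$ compact, then
$$\operatorname{supp}\mathcal Lf\subseteq t_{-\rho^\vee}\overline UC=\overline U\,t_{-\rho^\vee}C,$$
and $t_{-\rho^\vee}C$ is compact. Hence $\mathcal Lf\in\mathrm{ind}_{\overline U}^G\overline\omega$.

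\emph{Equivariance and bijectivity.} Left and right translations commute, so
$$\bigl(\mathcal L(g\cdot f)\bigr)(x)=f(t_{\rho^\vee}xg)=\bigl(g\cdot\mathcal Lf\bigr)(x).$$
The inverse is $f'\mapsto f'(t_{-\rho^\vee}\,\cdot\,)$. Its well-definedness follows by the same argument with the roles of $\psi$ and $\omega$ exchanged, since $\psi(u)=\omega(t_{-\rho^\vee}ut_{\rho^\vee})$ can be rewritten as $\omega(u')=\psi(t_{\rho^\vee}u't_{-\rho^\vee})$.

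\emph{Restriction to $K$-invariants.} Because $\mathcal L$ is $G$-equivariant, it is also $C_c(G)$-equivariant. Since $\mathcal L$ is linear and commutes with right translations, it commutes with the integral $\int_G\varphi(h)\,h\cdot f\,dh$; this integral is a finite sum on $K'$-invariant vectors. Therefore $\mathcal L$ maps $K$-invariants isomorphically onto $K$-invariants and commutes with the left action of $\mathcal H_K$. For the right convolution action, the paper's convention gives $f*\varphi=\check\varphi\cdot f$, which is again a $C_c(G)$-module action. So $\mathcal L(f*\varphi)=(\mathcal Lf)*\varphi$, and right convolution is intertwined as well.

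None of these steps is a real obstacle. The only point that needs care is the normalizer and conjugation bookkeeping in the well-definedness step, and that is exactly the displayed computation preceding the statement.
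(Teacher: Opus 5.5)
Your proof is correct and follows essentially the same route as the paper. The paper deduces the proposition directly from the displayed identity $f(t_{\rho^\vee}ug)=\overline\omega(u)\,f(t_{\rho^\vee}g)$ together with the fact that left translation commutes with right translation; your write-up fills in the routine checks the paper leaves implicit, namely support modulo $\overline U$, the inverse map, and the two Hecke-algebra actions.
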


\begin{corollary}\label{cor:freeness_O}
For $y \in X^+$, let $\Phi^{\overline{\omega}}_y$ be the function defined in Lemma \ref{lem:whittaker_support}(2). Then
$$\Phi^{\overline{\omega}}_0 * h_y = \Phi^{\overline{\omega}}_y,$$
and $(\mathrm{ind}_{\overline{U}}^G \overline{\omega})^K$ is a free right $\mathcal{H}_K$-module of rank one generated by $\Phi^{\overline{\omega}}_0$.
\end{corollary}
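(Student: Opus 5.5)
The plan is to transport Theorem \ref{thm:chVx_action_right} and Corollary \ref{cor:freeness_p} through the isomorphism $\mathcal{L}$ of Proposition \ref{prop:translation_iso}. Here $\psi$ is chosen as in Lemma \ref{lem:conductor_shift}, and we note that $\overline{\psi}$ is then the analogous shift of $\overline{\omega}$. The key computation is to identify $\mathcal{L}\phi_{x+\rho^\vee}$ with $\Phi^{\overline\omega}_x$ for every $x\in X^+$. Once this is done, we apply $\mathcal{L}$ to $\phi_{\rho^\vee}*h_x=\phi_{x+\rho^\vee}$. Since $\mathcal{L}$ commutes with right translation, it commutes with right convolution by $h_x$, and we obtain $\Phi^{\overline\omega}_0*h_x=\Phi^{\overline\omega}_x$.

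To identify $\mathcal{L}\phi_{x+\rho^\vee}$, we compare supports and values.

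\textbf{Support.} $\mathcal{L}\phi_{x+\rho^\vee}(g)=\phi_{x+\rho^\vee}(t_{\rho^\vee}g)$ is nonzero only when $t_{\rho^\vee}g\in\overline U t_{x+\rho^\vee}K$. Equivalently, $g\in t_{-\rho^\vee}\overline U t_{\rho^\vee}\,t_xK=\overline U t_xK$, using that $T$ normalizes $\overline U$ and that $t$'s commute.

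\textbf{Value at $t_x$.} We have $\phi_{x+\rho^\vee}(t_{x+\rho^\vee})=\delta_{\overline B}(t_{x+\rho^\vee})^{1/2}$. This differs from $\Phi^{\overline\omega}_x(t_x)=\delta_{\overline B}(t_x)^{1/2}$ by the constant factor $c:=\delta_{\overline B}(t_{\rho^\vee})^{1/2}=q^{-\langle\rho,\rho^\vee\rangle}$.

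Both functions lie in $(\mathrm{ind}\,\overline\omega)^K$, have support on $\overline U t_xK$, and are determined by their value at $t_x$ (Lemma \ref{lem:whittaker_support}). Hence $\mathcal{L}\phi_{x+\rho^\vee}=c\,\Phi^{\overline\omega}_x$ for all $x\in X^+$, with $c$ independent of $x$. Applying $\mathcal{L}$ to Theorem \ref{thm:chVx_action_right} then gives $c\,\Phi^{\overline\omega}_0*h_x=c\,\Phi^{\overline\omega}_x$, and we divide by $c\neq0$.

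For freeness, $\mathcal{L}$ is a bijection of $K$-invariants intertwining right convolution by $\mathcal H_K$ (Proposition \ref{prop:translation_iso}), so it carries the free generator $\phi_{\rho^\vee}$ of Corollary \ref{cor:freeness_p} to the generator $c\,\Phi^{\overline\omega}_0$. Alternatively, we argue directly: $\{h_y\}_{y\in X^+}$ is a basis of $\mathcal H_K$ and $\{\Phi^{\overline\omega}_y\}_{y\in X^+}$ is a basis of the $K$-invariants (Lemma \ref{lem:whittaker_support}(2)), so $h\mapsto\Phi^{\overline\omega}_0*h$ is bijective.

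The only delicate point is confirming that right convolution by $h\in\mathcal H_K$ commutes with $\mathcal{L}$. Right convolution $f*h=\check h\cdot f$ is an integral of right translates of $f$, and $\mathcal{L}$ is a left translation, so the two commute. A secondary check is the consistency of the modulus normalization, which only affects the harmless constant $c$.
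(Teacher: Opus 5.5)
Your proposal is correct and matches the paper's proof: the paper also computes that $\mathcal{L}(\phi_{y+\rho^\vee})$ is supported on $\overline U t_y K$ with values scaled by $\delta_{\overline B}(t_{\rho^\vee})^{1/2}$, giving $\Phi^{\overline\omega}_y=\delta_{\overline B}(t_{\rho^\vee})^{-1/2}\mathcal L(\phi_{y+\rho^\vee})$. It then transports Theorem \ref{thm:chVx_action_right} through $\mathcal{L}$, using that $\mathcal{L}$ commutes with right convolution, and deduces freeness from the two bases indexed by $X^+$.
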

\begin{proof}
With the choice of $\psi$ in Lemma \ref{lem:conductor_shift}, retain the notation $\phi_x=\Phi^{\overline\psi}_x$ of Section \ref{sec:ggp}. For $y\in X^+$, the function $\mathcal{L}(\phi_{y+\rho^\vee})$ is supported on
\[
t_{-\rho^\vee}\,\overline{U}\,t_{y+\rho^\vee}K
=\overline{U}\,t_yK.
\]
For $u\in\overline{U}$ and $k\in K$,
\begin{align*}
\mathcal{L}\big(\phi_{y+\rho^\vee}\big)(ut_yk)
&=\phi_{y+\rho^\vee}
\big((t_{\rho^\vee}ut_{-\rho^\vee})\,t_{y+\rho^\vee}k\big)\\
&= \delta_{\overline{B}}(t_{y+\rho^\vee})^{1/2}\, \overline{\omega}(u)\\
&= \delta_{\overline{B}}(t_{\rho^\vee})^{1/2}\,
   \delta_{\overline{B}}(t_y)^{1/2}\,\overline{\omega}(u).
\end{align*}
Thus, for every $y\in X^+$,
\begin{equation}\label{eq:translated_basis}
\Phi^{\overline\omega}_y
=\delta_{\overline B}(t_{\rho^\vee})^{-1/2}
\mathcal L\big(\phi_{y+\rho^\vee}\big).
\end{equation}
Since $\mathcal{L}$ commutes with right convolution, \eqref{eq:translated_basis} and Theorem \ref{thm:chVx_action_right} imply
\begin{align*}
\Phi^{\overline\omega}_0*h_y
&=\delta_{\overline B}(t_{\rho^\vee})^{-1/2}
  \mathcal L(\phi_{\rho^\vee})*h_y\\
&=\delta_{\overline B}(t_{\rho^\vee})^{-1/2}
  \mathcal L\big(\phi_{\rho^\vee}*h_y\big)\\
&=\delta_{\overline B}(t_{\rho^\vee})^{-1/2}
  \mathcal L\big(\phi_{y+\rho^\vee}\big)\\
&=\Phi^{\overline\omega}_y.
\end{align*}
Freeness follows because $\{h_y\mid y\in X^+\}$ is a basis of $\mathcal H_K$ by Fact \ref{fact:chV}(2) and Theorem \ref{thm:spherical_iso}, and $\{\Phi^{\overline\omega}_y\mid y\in X^+\}$ is a basis of $(\mathrm{ind}_{\overline U}^G\overline\omega)^K$ by Lemma \ref{lem:whittaker_support}(2).
\end{proof}

\section[The Casselman--Shalika Formula and the Genericity Criterion]{\fontsize{15}{18}\selectfont The Casselman--Shalika Formula and the Genericity Criterion}\label{sec:cs}

We continue to assume $\rho^\vee \in X$. Pairing spherical Whittaker functions against the free modules of Sections \ref{sec:ggp} and \ref{sec:conductorO}, with $h_y = (\mathrm{ch}V_y)e_K$, yields the Casselman--Shalika formula. The space $e_{\mathrm{sgn}}\mathcal{H}e_K$ then determines which spherical representations admit Whittaker models.

\subsection{The induction--compact induction pairing}

This subsection does not use the assumption $\rho^\vee \in X$. Let $\tau$ be a character of $\overline{U}$. For $f_1 \in \mathrm{Ind}_{\overline{U}}^G \tau$ and $f_2 \in \mathrm{ind}_{\overline{U}}^G \overline{\tau}$, the product $f_1 f_2$ lies in $C_c(\overline{U}\backslash G)$, and we set
\begin{equation}\label{eq:pairing_def}
\langle f_1, f_2 \rangle := \int_{\overline{U}\backslash G} f_1(g) f_2(g)\, dg = \sum_{y \in X} \delta_{\overline{B}}(t_y)^{-1} \int_K f_1(t_y k)\, f_2(t_y k)\, dk,
\end{equation}
where $dk$ is the restriction to $K$ of the chosen Haar measure on $G$, so that $\int_K dk = W(q)$. The second equality follows from the Iwasawa integration formula after integrating out $\overline U$, and the resulting functional is invariant under right translation \cite[Section IV.1]{Ca}.

\begin{lemma}\label{lem:pairing}
Let $f_1 \in (\mathrm{Ind}_{\overline{U}}^G \tau)^K$.
\begin{enumerate}
\item For every $f_2 \in \mathrm{ind}_{\overline{U}}^G \overline{\tau}$ and $H \in \mathcal{H}_K$, define the right convolution by
$$ (f_2 * H)(g) = \int_G f_2(gh^{-1})H(h)\,dh. $$
With the left action of Section \ref{sec:notation},
$$\langle f_1, f_2 * H \rangle = \langle H \cdot f_1, f_2 \rangle.$$
\item If $\tau$ has conductor $\mathcal{O}$ or $\mathfrak{p}$, then for every $y\in X(\tau)$,
$$\big\langle f_1,\, \Phi^{\overline{\tau}}_y \big\rangle = W(q)\,\delta_{\overline{B}}(t_y)^{-1/2}\, f_1(t_y).$$
\end{enumerate}
\end{lemma}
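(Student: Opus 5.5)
For part (1), I will unfold both sides using the integral definition \eqref{eq:pairing_def} of the pairing and exchange the order of integration. By the definition of right convolution,
$$\langle f_1, f_2*H\rangle=\int_{\overline U\backslash G}\int_G f_1(g)\,f_2(gh^{-1})\,H(h)\,dh\,dg .$$
The integrand is compactly supported modulo $\overline U$ in $g$, and $H$ is compactly supported, so Fubini applies. For fixed $h$, I will substitute $g\mapsto gh$. This is legitimate because the quotient measure on $\overline U\backslash G$ is right $G$-invariant, which is the invariance recalled after \eqref{eq:pairing_def}. After the substitution the expression becomes
$$\int_G H(h)\int_{\overline U\backslash G} f_1(gh)\,f_2(g)\,dg\,dh=\int_{\overline U\backslash G}\Bigl(\int_G H(h)f_1(gh)\,dh\Bigr)f_2(g)\,dg .$$
The inner integral is exactly $(H\cdot f_1)(g)$ under the left action on functions from Section \ref{sec:notation}. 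This proves (1). The $K$-invariance of $f_1$ is not needed here beyond making $H\cdot f_1$ land in the $K$-invariants. The only point to check is that the convolution written in (1) agrees with $f_2*H$ for the convention of Section \ref{sec:notation}, which is a change of variables $h\mapsto h^{-1}g$.

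For part (2), I will use the second expression in \eqref{eq:pairing_def}. Since $\Phi^{\overline\tau}_y$ is supported on $\overline U t_yK$, and the Iwasawa decomposition is disjoint, only the summand indexed by $y$ survives. In that summand, for $k\in K$, $K$-invariance gives $f_1(t_yk)=f_1(t_y)$, and the defining formula gives $\Phi^{\overline\tau}_y(t_yk)=\delta_{\overline B}(t_y)^{1/2}$. Integrating over $K$ contributes the factor $\int_K dk=W(q)$. The result is
$$\delta_{\overline B}(t_y)^{-1}\cdot W(q)\cdot f_1(t_y)\,\delta_{\overline B}(t_y)^{1/2}=W(q)\,\delta_{\overline B}(t_y)^{-1/2}f_1(t_y),$$
as claimed. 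Lemma \ref{lem:whittaker_support}(2) guarantees that $\Phi^{\overline\tau}_y$ is well defined for $y\in X(\tau)$, since $\overline\tau$ has the same conductor as $\tau$.

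The main obstacle is the bookkeeping in (1): keeping the left action and the convolution conventions consistent, and justifying the right-invariance of the $\overline U\backslash G$ integral for the product $f_1f_2$. That invariance holds because $f_1f_2$ is left $\overline U$-invariant, as $\tau\overline\tau=1$, so I will cite the invariance of the quotient measure directly.
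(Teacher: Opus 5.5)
Your proposal is correct and follows essentially the same route as the paper. For (1) you exchange the integrals and substitute $g\mapsto gh$ using the right invariance of the pairing; for (2) you use that $\Phi^{\overline{\tau}}_y$ is supported on the single coset $\overline{U}t_yK$, so only the $y$-term of \eqref{eq:pairing_def} survives, and then apply the right $K$-invariance of $f_1$ and $\int_K dk = W(q)$.
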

\begin{proof}
For (1), exchange the integrals and, by the right invariance of \eqref{eq:pairing_def}, substitute $g \mapsto gh$. For (2), the function $\Phi^{\overline{\tau}}_y$ is supported on the single coset $\overline{U}t_yK$ with value $\delta_{\overline{B}}(t_y)^{1/2}\overline{\tau}(u)$ at $ut_yk$. Hence only the term $y$ contributes to \eqref{eq:pairing_def}, and since $f_1$ is right $K$-invariant,
\[\langle f_1, \Phi^{\overline{\tau}}_y \rangle = \delta_{\overline{B}}(t_y)^{-1}\int_K f_1(t_yk)\,\Phi^{\overline{\tau}}_y(t_yk)\,dk = \delta_{\overline{B}}(t_y)^{-1}\cdot f_1(t_y)\cdot \delta_{\overline{B}}(t_y)^{1/2}\cdot W(q). \qedhere\]
\end{proof}

\subsection{The Casselman--Shalika formula}

\begin{definition}
Let $\chi : \mathcal{H}_K \to \mathbb{C}$ be an algebra character and $\tau$ a character of $\overline{U}$. A \emph{spherical Whittaker function} with parameter $\chi$ is a non-zero $\mathcal{W} \in (\mathrm{Ind}_{\overline{U}}^G \tau)^K$ satisfying $h \cdot \mathcal{W} = \chi(h)\, \mathcal{W}$ for all $h \in \mathcal{H}_K$.
\end{definition}

Via the Satake isomorphism $\mathcal A^W \xrightarrow{\sim} \mathcal H_K$, $F \mapsto Fe_K$, we also use $\chi$ for its pullback to $\mathcal A^W$. Thus $\chi(F)$ means $\chi(Fe_K)$ for $F \in \mathcal A^W$.

\begin{theorem}[Casselman--Shalika formula, conductor $\mathcal{O}$]\label{thm:CS_O}
Let $0 \neq \mathcal{W} \in (\mathrm{Ind}_{\overline{U}}^G\, \omega)^K$ be a spherical Whittaker function with parameter $\chi$. Then $\mathcal{W}(t_y) = 0$ for $y \notin X^+$, and
\[
\mathcal{W}(t_y) = \delta_{\overline{B}}(t_y)^{1/2}\, \chi(\mathrm{ch}V_y)\, \mathcal{W}(e) \qquad (y \in X^+).
\]
In particular, $\mathcal{W}(e) \neq 0$.
\end{theorem}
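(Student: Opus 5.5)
The plan is to pair $\mathcal{W}$ against the free module $(\mathrm{ind}_{\overline U}^G\overline\omega)^K$ of Corollary \ref{cor:freeness_O}, using Lemma \ref{lem:pairing} with $\tau=\omega$ (conductor $\mathcal O$, so $X(\tau)=X^+$ and $\overline\tau=\overline\omega$).

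\emph{Vanishing outside $X^+$.} The first assertion, $\mathcal W(t_y)=0$ for $y\notin X^+$, is exactly Lemma \ref{lem:whittaker_support}(1) applied to $f=\mathcal W$. By the Iwasawa decomposition, $\mathcal W$ is then determined by its values $\mathcal W(t_y)$ with $y\in X^+$.

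\emph{The main computation.} For $y\in X^+$ we compute $\langle\mathcal W,\Phi^{\overline\omega}_y\rangle$ in two ways. By Lemma \ref{lem:pairing}(2) it equals $W(q)\,\delta_{\overline B}(t_y)^{-1/2}\,\mathcal W(t_y)$. On the other hand, Corollary \ref{cor:freeness_O} gives $\Phi^{\overline\omega}_y=\Phi^{\overline\omega}_0*h_y$. Lemma \ref{lem:pairing}(1) then yields
\[
\langle\mathcal W,\Phi^{\overline\omega}_0*h_y\rangle=\langle h_y\cdot\mathcal W,\Phi^{\overline\omega}_0\rangle=\chi(h_y)\,\langle\mathcal W,\Phi^{\overline\omega}_0\rangle=\chi(h_y)\,W(q)\,\mathcal W(e),
\]
where the last step is Lemma \ref{lem:pairing}(2) at $y=0$, using $\delta_{\overline B}(t_0)=1$. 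Equating the two expressions and recalling $\chi(\mathrm{ch}V_y)=\chi(h_y)$ gives
\[
\mathcal W(t_y)=\delta_{\overline B}(t_y)^{1/2}\chi(\mathrm{ch}V_y)\mathcal W(e).
\]

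\emph{Nonvanishing at $e$.} If $\mathcal W(e)=0$, the formula forces $\mathcal W(t_y)=0$ for all $y\in X^+$. Together with the vanishing outside $X^+$ and the Iwasawa decomposition $G=\bigsqcup\overline U t_yK$, this gives $\mathcal W=0$, a contradiction. Hence $\mathcal W(e)\neq0$.

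The only delicate point is the convention check in Lemma \ref{lem:pairing}(1). We need the right convolution $f_2*H$ appearing there to match the convolution in Corollary \ref{cor:freeness_O}, namely $f*\varphi=\check\varphi\cdot f$ from Section \ref{sec:notation}, and we need the left action of $H$ on $\mathcal W$ to be the one with respect to which $\chi$ is defined. I expect this to be consistent, since $\int f_2(gh^{-1})H(h)\,dh$ is precisely the convolution $f_2*H$. If a check were needed, I would write out both integrals and substitute $h\mapsto h^{-1}$ (unimodularity), possibly invoking \eqref{eq:anti_involution_spherical} should a $\bar\chi$ appear. The pairing is well defined since $\mathcal W\Phi^{\overline\omega}_y$ is left $\overline U$-invariant with compact support modulo $\overline U$.
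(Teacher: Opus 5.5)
Your proposal is correct and follows the paper's proof essentially step for step. You use Lemma \ref{lem:whittaker_support}(1) for the vanishing, evaluate $\langle\mathcal W,\Phi^{\overline\omega}_y\rangle$ by Lemma \ref{lem:pairing}(2), rewrite $\Phi^{\overline\omega}_y=\Phi^{\overline\omega}_0*h_y$ via Corollary \ref{cor:freeness_O}, and move $h_y$ onto $\mathcal W$ with Lemma \ref{lem:pairing}(1). The convention you flag is consistent: after the substitution $h\mapsto gh^{-1}$, the convolution in Lemma \ref{lem:pairing}(1) is exactly the one of Section \ref{sec:notation}. Your Iwasawa argument for $\mathcal W(e)\neq0$ simply spells out the ``in particular'' that the paper leaves implicit.
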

\begin{proof}
The vanishing off $X^+$ is Lemma \ref{lem:whittaker_support}(1). Fix $y \in X^+$ and pair $\mathcal{W}$ against the basis element $\Phi^{\overline\omega}_y$ defined in Lemma \ref{lem:whittaker_support}(2). Lemma \ref{lem:pairing}(2) evaluates the first pairing below. Corollary \ref{cor:freeness_O} replaces $\Phi^{\overline\omega}_y$ by $\Phi^{\overline\omega}_0*h_y$, and Lemma \ref{lem:pairing}(1), together with the eigenfunction relation, transfers $h_y$ to $\mathcal W$:
\begin{align*}
W(q)\,\delta_{\overline{B}}(t_y)^{-1/2}\, \mathcal{W}(t_y) = \big\langle \mathcal{W}, \Phi^{\overline{\omega}}_y \big\rangle
= \big\langle \mathcal{W}, \Phi^{\overline{\omega}}_0 * h_y \big\rangle
= \chi(\mathrm{ch}V_y)\, \big\langle \mathcal{W}, \Phi^{\overline{\omega}}_0 \big\rangle.
\end{align*}
Applying Lemma \ref{lem:pairing}(2) with $y = 0$ shows that the last pairing equals $W(q)\, \mathcal{W}(e)$. Dividing by $W(q)\,\delta_{\overline{B}}(t_y)^{-1/2}$ proves the formula.
\end{proof}

By the same proof, using $\phi_{\rho^\vee}$ and Theorem \ref{thm:chVx_action_right} in place of $\Phi^{\overline{\omega}}_0$ and Corollary \ref{cor:freeness_O}, we have the following conductor $\mathfrak p$ version.

\begin{theorem}[Casselman--Shalika formula, conductor $\mathfrak{p}$]\label{thm:CS_p}
Let $0 \neq \mathcal{W} \in (\mathrm{Ind}_{\overline{U}}^G\, \psi)^K$ be a spherical Whittaker function with parameter $\chi$. Then $\mathcal{W}(t_x) = 0$ for $x \notin X^{++}$, and
\[
\mathcal{W}(t_x) = \delta_{\overline{B}}(t_{x-\rho^\vee})^{1/2}\, \chi(\mathrm{ch}V_{x-\rho^\vee})\, \mathcal{W}(t_{\rho^\vee}) \qquad (x \in X^{++}).
\]
\end{theorem}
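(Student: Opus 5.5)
The plan is to repeat the proof of Theorem \ref{thm:CS_O} verbatim, working now with the conductor-$\mathfrak p$ objects of Section \ref{sec:ggp}. Take $\tau=\psi$, so that $\overline\tau=\overline\psi$ and $\Phi^{\overline\psi}_x=\phi_x$.

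\textbf{Step 1 (vanishing).} The vanishing of $\mathcal W(t_x)$ for $x\notin X^{++}$ is exactly Lemma \ref{lem:whittaker_support}(1), since $X(\psi)=X^{++}$.

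\textbf{Step 2 (pairing).} Fix $x\in X^{++}$. Under the standing assumption $\rho^\vee\in X$ we have $X^{++}=\rho^\vee+X^+$, so we may write $x=y+\rho^\vee$ with $y\in X^+$. Lemma \ref{lem:pairing}(2), applied to $\tau=\psi$ and the point $x$, gives
\[
W(q)\,\delta_{\overline B}(t_x)^{-1/2}\,\mathcal W(t_x)=\langle \mathcal W,\phi_x\rangle .
\]

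\textbf{Step 3 (move $h_y$ across).} Theorem \ref{thm:chVx_action_right} gives $\phi_x=\phi_{\rho^\vee}*h_y$. Lemma \ref{lem:pairing}(1) and the eigen-relation $h_y\cdot\mathcal W=\chi(h_y)\mathcal W$ then yield
\[
\langle \mathcal W,\phi_x\rangle=\langle h_y\cdot\mathcal W,\phi_{\rho^\vee}\rangle=\chi(\mathrm{ch}V_y)\,\langle\mathcal W,\phi_{\rho^\vee}\rangle .
\]
The right convolution in Lemma \ref{lem:pairing}(1) must be checked to agree with the $*$ used in Theorem \ref{thm:chVx_action_right}. Both are $(f*H)(g)=\int f(gh^{-1})H(h)\,dh$ for $f$ a function on $G$, so this is only a bookkeeping check and should cause no difficulty.

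\textbf{Step 4 (evaluate the base point).} Lemma \ref{lem:pairing}(2) at $\rho^\vee\in X^{++}$ gives
\[
\langle\mathcal W,\phi_{\rho^\vee}\rangle=W(q)\,\delta_{\overline B}(t_{\rho^\vee})^{-1/2}\,\mathcal W(t_{\rho^\vee}).
\]

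\textbf{Step 5 (conclude).} Divide the equation from Steps 2–4 by $W(q)\,\delta_{\overline B}(t_x)^{-1/2}$ and use multiplicativity of $\delta_{\overline B}$ on $T$:
\[
\delta_{\overline B}(t_x)^{1/2}\,\delta_{\overline B}(t_{\rho^\vee})^{-1/2}=\delta_{\overline B}(t_{x-\rho^\vee})^{1/2}.
\]
This produces the stated formula with $\mathrm{ch}V_{x-\rho^\vee}=\mathrm{ch}V_y$.

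None of these steps is a real obstacle, since all the substance is contained in Theorem \ref{thm:chVx_action_right} and Lemma \ref{lem:pairing}.
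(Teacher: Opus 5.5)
Your proposal is correct and is the paper's own argument: the paper proves this theorem by rerunning the proof of Theorem~\ref{thm:CS_O} with $\phi_{\rho^\vee}$ in place of $\Phi^{\overline{\omega}}_0$ and Theorem~\ref{thm:chVx_action_right} in place of Corollary~\ref{cor:freeness_O}. Your Steps~1--5 spell out exactly that, including the conversion $\delta_{\overline B}(t_x)^{1/2}\delta_{\overline B}(t_{\rho^\vee})^{-1/2}=\delta_{\overline B}(t_{x-\rho^\vee})^{1/2}$ and the check that both convolution conventions agree.
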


\subsection{The genericity criterion}

For a character $\tau$ of $\overline{U}$, we say that an irreducible smooth representation $V$ is \emph{$\tau$-generic} if $\mathrm{Hom}_{\overline{U}}(V, \tau) \neq 0$. Equivalently, $V$ is $\tau$-generic if $\mathrm{Hom}_G(V, \mathrm{Ind}_{\overline{U}}^G\tau) \neq 0$. Recall from Lemma \ref{lem:spherical_unique} and Definition \ref{def:spherical_rep} the simple $\mathcal{H}$-module $M_G(\chi)$ and the spherical representation $V_G(\chi)$ attached to an algebra character $\chi$ of $\mathcal{H}_K$. The spherical vector $v_K$ spans $V_G(\chi)^K = e_K M_G(\chi)$. We determine which spherical representations are generic.

Recall the Iwahori--Matsumoto involution $\iota$ of Lemma \ref{lem:IM} and the elements $f_a$, $a\in\Sigma$, defined in Section \ref{sec:bbf}.

\begin{proposition}\label{prop:esgnHeK}
Set
$$\Omega^{-} := \iota(\Omega^{+}) = \theta_{-\rho^\vee}\prod_{a\in\Sigma^+}f_{-a}.$$
Then
$$e_{\mathrm{sgn}}\mathcal{H}e_K = \mathcal{A}^W\,\Omega^{-} e_K.$$
The module $e_{\mathrm{sgn}}\mathcal{H}e_K$ is therefore free over $\mathcal{A}^W$ of rank one. Moreover, $e_{\mathrm{sgn}}\,\Omega^- e_K = \Omega^- e_K$.
\end{proposition}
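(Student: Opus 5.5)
The plan is to transport Proposition \ref{prop:eKHe_sgn} through the Iwahori--Matsumoto involution $\iota$ of Lemma \ref{lem:IM}. Since $\iota$ is an algebra automorphism with $\iota(e_K)=e_{\mathrm{sgn}}$ and $\iota(e_{\mathrm{sgn}})=e_K$, we have
$$\iota\big(e_K\mathcal He_{\mathrm{sgn}}\big)=\iota(e_K)\,\iota(\mathcal H)\,\iota(e_{\mathrm{sgn}})=e_{\mathrm{sgn}}\mathcal He_K .$$
Applying $\iota$ to the equality $e_K\mathcal He_{\mathrm{sgn}}=\mathcal A^W\Omega^+e_{\mathrm{sgn}}$ therefore gives
$$e_{\mathrm{sgn}}\mathcal He_K=\iota(\mathcal A^W)\,\iota(\Omega^+)\,e_K .$$
It remains to identify $\iota(\mathcal A^W)$ and $\iota(\Omega^+)$.

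On $\mathcal A$, the involution $\iota$ is the inversion $\theta_\lambda\mapsto\theta_{-\lambda}$. This map commutes with the $W$-action, because $w(-\lambda)=-w\lambda$. Hence it maps $\mathcal A^W$ bijectively onto itself, so $\iota(\mathcal A^W)=\mathcal A^W$.

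For $\Omega^+$, inversion sends $\theta_{\rho^\vee}$ to $\theta_{-\rho^\vee}$. As noted in Section \ref{sec:bbf}, it sends each $f_a$ to $f_{-a}$; this uses that the parameters $q_a$, $c_i(a)$ are $W$-invariant and that $-a$ is divisible exactly when $a$ is. This gives the stated product formula
$$\Omega^-=\theta_{-\rho^\vee}\prod_{a\in\Sigma^+}f_{-a}.$$

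Freeness of rank one over $\mathcal A^W$ follows by the same argument as in Proposition \ref{prop:eKHe_sgn}. The element $\Omega^-$ is non-zero, $\mathcal A$ is a domain, and $F\mapsto Fe_K$ is injective on $\mathcal A$ by Lemma \ref{lem:HIK}. Therefore $P\mapsto P\Omega^-e_K$ is injective on $\mathcal A^W$. Alternatively, freeness is simply transported by the bijection $\iota$.

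Finally, $\Omega^-e_K=\iota(\Omega^+e_{\mathrm{sgn}})$. Proposition \ref{prop:eKHe_sgn} shows $\Omega^+e_{\mathrm{sgn}}\in e_K\mathcal He_{\mathrm{sgn}}$, so $e_K\Omega^+e_{\mathrm{sgn}}=\Omega^+e_{\mathrm{sgn}}$. Applying $\iota$ to this identity yields $e_{\mathrm{sgn}}\Omega^-e_K=\Omega^-e_K$.

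There is no serious obstacle here. The only point needing care is confirming that $\iota$ restricted to $\mathcal A$ is exactly the inversion. This holds because $\iota$ is multiplicative and $\iota(\theta_\lambda)=\theta_{-\lambda}$ on generators, so it acts as inversion on all of $\mathcal A\cong\mathbb C[X]$.
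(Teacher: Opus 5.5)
Your proposal is correct and follows the paper's proof: you apply $\iota$ to Proposition \ref{prop:eKHe_sgn} and use $\iota(\mathcal A^W)=\mathcal A^W$. Freeness then comes from the injectivity of $F\mapsto Fe_K$ (Lemma \ref{lem:HIK}), and the identity $e_{\mathrm{sgn}}\Omega^-e_K=\Omega^-e_K$ from $\Omega^-e_K\in e_{\mathrm{sgn}}\mathcal He_K$, which is the $\iota$-image of $e_K\Omega^+e_{\mathrm{sgn}}=\Omega^+e_{\mathrm{sgn}}$.
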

\begin{proof}
Applying the involution $\iota$ of Lemma \ref{lem:IM} to Proposition \ref{prop:eKHe_sgn} gives
$$e_{\mathrm{sgn}}\mathcal{H}e_K = \iota\big(e_K \mathcal{H} e_{\mathrm{sgn}}\big) = \iota\big(\mathcal{A}^W \Omega^+ e_{\mathrm{sgn}}\big) = \mathcal{A}^W\, \Omega^- e_K,$$
since $\iota(\mathcal{A}^W) = \mathcal{A}^W$. Freeness holds because $\mathcal{A}$ is a domain and $F \mapsto Fe_K$ is injective on $\mathcal{A}$ (Lemma \ref{lem:HIK}). The last assertion holds because $\Omega^-e_K$ lies in $e_{\mathrm{sgn}}\mathcal{H}e_K$.
\end{proof}

Set
$$Z := \prod_{a \in \Sigma} f_a \in \mathcal{A}^W.$$
Under the standing assumption $\rho^\vee\in X$, this element factors as $Z=\Omega^{-}\Omega^{+}$.

\begin{theorem}\label{thm:genericity_Z}
Let $\chi$ be an algebra character of $\mathcal{H}_K$ and let $M = M_G(\chi)$ with spherical vector $v_K$. Then
$$e_{\mathrm{sgn}} M = \mathbb{C}\,\Omega^{-} v_K, \qquad \Omega^{-}v_K \neq 0 \iff \chi(Z) \neq 0.$$
\end{theorem}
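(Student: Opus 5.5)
Since $M=M_G(\chi)$ is simple with $e_KM=\mathbb{C}v_K$ (Lemma \ref{lem:spherical_unique}), we have $M=\mathcal{H}v_K=\mathcal{H}e_Kv_K$. Therefore
$$e_{\mathrm{sgn}}M=e_{\mathrm{sgn}}\mathcal{H}e_K v_K=\mathcal{A}^W\Omega^-e_Kv_K$$
by Proposition \ref{prop:esgnHeK}. Every $F\in\mathcal{A}^W$ is central in $\mathcal{H}$ (Section \ref{subsec:bernstein}), so $Fe_K\in\mathcal{H}_K$ acts on $v_K$ by $\chi(F)$. Thus $F\Omega^-e_Kv_K=\Omega^-Fe_Kv_K=\chi(F)\Omega^-v_K$. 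This gives $e_{\mathrm{sgn}}M=\mathbb{C}\,\Omega^-v_K$, using $e_Kv_K=v_K$.

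For the equivalence, I will apply $e_K\Omega^+e_{\mathrm{sgn}}$ to $\Omega^-v_K$. By Proposition \ref{prop:esgnHeK}, $e_{\mathrm{sgn}}\Omega^-e_K=\Omega^-e_K$. Since $\Omega^+e_{\mathrm{sgn}}\in e_K\mathcal{H}e_{\mathrm{sgn}}$ (Proposition \ref{prop:eKHe_sgn}), we also have $e_K\Omega^+e_{\mathrm{sgn}}=\Omega^+e_{\mathrm{sgn}}$. Hence
$$e_K\Omega^+e_{\mathrm{sgn}}\cdot\Omega^-v_K=\Omega^+e_{\mathrm{sgn}}\Omega^-e_Kv_K=\Omega^+\Omega^-e_Kv_K=Ze_Kv_K=\chi(Z)v_K,$$
using commutativity of $\mathcal{A}$, $Z=\Omega^-\Omega^+\in\mathcal{A}^W$, and the Satake convention $\chi(Z)=\chi(Ze_K)$. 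So $\chi(Z)\neq0$ forces $\Omega^-v_K\neq0$.

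For the converse, suppose $\Omega^-v_K\neq0$. Then $e_{\mathrm{sgn}}M\neq0$, and since $M$ is simple, $M=\mathcal{H}\Omega^-v_K$. Therefore
$$\mathbb{C}v_K=e_KM=e_K\mathcal{H}e_{\mathrm{sgn}}\Omega^-v_K=\mathcal{A}^W\Omega^+e_{\mathrm{sgn}}\Omega^-v_K,$$
where the last step uses Proposition \ref{prop:eKHe_sgn} together with $e_{\mathrm{sgn}}\Omega^-v_K=\Omega^-v_K$. By the computation above, $\Omega^+e_{\mathrm{sgn}}\Omega^-v_K=\chi(Z)v_K$. The left side is nonzero, so $\chi(Z)\neq0$.

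The main point needing care is the identity $e_{\mathrm{sgn}}\Omega^-e_K=\Omega^-e_K$, which lets the sign idempotent be absorbed. It is already recorded in Proposition \ref{prop:esgnHeK}, so the argument is essentially formal.
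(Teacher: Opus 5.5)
Your proposal is correct and follows essentially the same route as the paper: it identifies $e_{\mathrm{sgn}}M=e_{\mathrm{sgn}}\mathcal{H}e_Kv_K=\mathbb{C}\,\Omega^-v_K$ via Proposition \ref{prop:esgnHeK}, gets $\chi(Z)v_K$ from $\Omega^+\Omega^-v_K$ for the forward direction, and uses simplicity plus Proposition \ref{prop:eKHe_sgn} for the converse. The only difference is cosmetic. In the forward direction you sandwich $\Omega^+$ between $e_K$ and $e_{\mathrm{sgn}}$, whereas the paper applies $\Omega^+$ directly to $\Omega^-v_K$ and obtains $Zv_K=\chi(Z)v_K$ without absorbing the idempotents.
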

\begin{proof}
For each $P\in\mathcal A^W$, we have $Pe_K\in\mathcal H_K$ and hence $Pv_K=(Pe_K)v_K=\chi(P)v_K$.

Put $w=\Omega^-v_K$. Then $e_{\mathrm{sgn}}w = w$ by the last assertion of Proposition \ref{prop:esgnHeK}. Moreover, the following calculation shows that $\mathcal A^Ww = \mathbb Cw$:
$$Pw=P\Omega^-v_K=\Omega^-Pv_K=\chi(P)w \qquad (P\in\mathcal A^W).$$
Since $M=\mathcal Hv_K$, Proposition \ref{prop:esgnHeK} now identifies the sign-isotypic part:
$$e_{\mathrm{sgn}}M=e_{\mathrm{sgn}}\mathcal He_Kv_K=\mathcal A^Ww=\mathbb Cw.$$

If $\chi(Z)\neq0$, then
$$\Omega^+w=Zv_K=\chi(Z)v_K\neq0,$$
so $w\neq0$.

Conversely, suppose $w\neq0$. Then $M=\mathcal Hw$ by simplicity. Since $w\in e_{\mathrm{sgn}}M$, Proposition \ref{prop:eKHe_sgn} implies
$$\mathbb Cv_K=e_KM=e_K\mathcal He_{\mathrm{sgn}}w=\mathcal A^W\Omega^+w=\mathbb C\,\chi(Z)v_K.$$
The left side is non-zero, so $\chi(Z)\neq0$.
\end{proof}

For $a\in\Sigma$, set
$$f_a^{\mathrm{inv}}:=\begin{cases}
1-q_a^{-1}\theta_{-a^\vee} & \text{if }a\text{ is not divisible},\\[2pt]
\big(1-c_1(a)^{-1}\theta_{-a^\vee}\big)\big(1+c_2(a)^{-1}\theta_{-a^\vee}\big) & \text{if }a\text{ is divisible}
\end{cases}.$$
In particular, for split $G$, $\prod_{a\in\Sigma}f_a^{\mathrm{inv}}=\prod_{\alpha\in\Phi}\big(1-q^{-1}\theta_{\alpha^\vee}\big)$.

\begin{corollary}[The genericity criterion, $\rho^\vee \in X$]\label{cor:genericity}
Assume $\rho^\vee \in X$ and let $\chi$ be an algebra character of $\mathcal{H}_K$. The following are equivalent.
\begin{enumerate}
\item $V_G(\chi)$ is $\psi$-generic for every character $\psi$ of $\overline{U}$ of conductor $\mathfrak{p}$.
\item $V_G(\chi)$ is $\omega$-generic for every character $\omega$ of $\overline{U}$ of conductor $\mathcal{O}$.
\item $V_G(\chi)$ is $\tau$-generic for some character $\tau$ of $\overline U$ of conductor $\mathcal O$ or $\mathfrak p$.
\item $\chi(Z) \neq 0$, equivalently $\displaystyle \chi\Big(\prod_{a\in\Sigma}f_a^{\mathrm{inv}}\Big) \neq 0$.
\end{enumerate}
\end{corollary}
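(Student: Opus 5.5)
We prove (1)$\Rightarrow$(3), (2)$\Rightarrow$(3), (3)$\Rightarrow$(4) and (4)$\Rightarrow$(1),(2). The link is a comparison between $\tau$-genericity and the sign-isotypic part $e_{\mathrm{sgn}}M_G(\chi)$, which Theorem \ref{thm:genericity_Z} controls.

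\textbf{Conductor $\mathfrak p$.} Let $\psi$ have conductor $\mathfrak p$ and set $V=V_G(\chi)$. Frobenius reciprocity, applied to the smooth dual, gives
\[
\mathrm{Hom}_{\overline U}(V,\psi)\cong\mathrm{Hom}_G\big(\mathrm{ind}_{\overline U}^G\overline\psi,\widetilde V\big).
\]
Since $\widetilde V\in\mathcal R_I(G)$, Fact \ref{fact:borel} identifies this with $\mathrm{Hom}_{\mathcal H}\big((\mathrm{ind}_{\overline U}^G\overline\psi)^I,\widetilde V^I\big)$. By Fact \ref{fact:CS}(2), the source is isomorphic to $\mathcal He_{\mathrm{sgn}}$. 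Therefore
\[
\mathrm{Hom}_{\mathcal H}\big(\mathcal He_{\mathrm{sgn}},\widetilde V^I\big)\cong e_{\mathrm{sgn}}\widetilde V^I.
\]
By Lemma \ref{lem:contragredient}, $\widetilde V\cong V_G(\bar\chi)$, so this space is $e_{\mathrm{sgn}}M_G(\bar\chi)$. Theorem \ref{thm:genericity_Z} then says it is non-zero exactly when $\bar\chi(Z)\neq0$, that is, when $\chi(\bar Z)\ne0$. Since $\bar f_a=f_{-a}$ and $\Sigma=-\Sigma$, we have $\bar Z=Z$. Hence $V$ is $\psi$-generic if and only if $\chi(Z)\ne0$. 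This holds for every such $\psi$ at once, which gives (1)$\Leftrightarrow$(4).

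\textbf{Conductor $\mathcal O$.} Conjugation by $t_{\rho^\vee}$ (Lemma \ref{lem:conductor_shift}) carries characters of conductor $\mathcal O$ to characters of conductor $\mathfrak p$. If $\lambda\in\mathrm{Hom}_{\overline U}(V,\omega)$, then $v\mapsto\lambda(t_{\rho^\vee}v)$ lies in $\mathrm{Hom}_{\overline U}(V,\psi)$ for the corresponding $\psi$, and the map $\lambda\mapsto\lambda\circ t_{\rho^\vee}$ is a bijection. The translation operator $\mathcal L$ of Proposition \ref{prop:translation_iso} encodes the same fact at the level of induced representations. So $\omega$-genericity is equivalent to $\psi$-genericity for the paired $\psi$. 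Combined with the previous step, (2)$\Leftrightarrow$(4) and (3)$\Leftrightarrow$(4) follow.

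\textbf{Equivalent form of (4).} We show $\chi(Z)\ne0$ if and only if $\chi(\prod_a f_a^{\mathrm{inv}})\ne0$. For non-divisible $a$,
\[
f_{-a}=1-q_a\theta_{a^\vee}=-q_a\theta_{a^\vee}\big(1-q_a^{-1}\theta_{-a^\vee}\big)=-q_a\theta_{a^\vee}f_a^{\mathrm{inv}}.
\]
For divisible $a$, factor each of the two linear terms in the same way, using $c_1c_2=q_a$. Pair $a$ with $-a$ over all of $\Sigma$. Then $Z$ equals a non-zero constant times a monomial times $\prod_a f_a^{\mathrm{inv}}$. The monomial is $\theta_\mu$ with $\mu=\sum_{a\in\Sigma}m_aa^\vee=0$ by symmetry, so it is $1$. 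Hence the two products differ by a non-zero scalar, and their $\chi$-values vanish together.

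\textbf{Main obstacle.} The hardest step is the first: justifying the identification $\mathrm{Hom}_{\overline U}(V,\psi)\cong\mathrm{Hom}_G(\mathrm{ind}\,\overline\psi,\widetilde V)$ and checking that the duality conventions produce $e_{\mathrm{sgn}}$, rather than another idempotent, on $\widetilde V^I$. Both points must be checked with care. The conjugate parameter $\bar\chi$ that this introduces is harmless because $Z$ is invariant under $\theta_\lambda\mapsto\theta_{-\lambda}$.
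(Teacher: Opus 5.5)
Your proof is correct in substance, except for a sign error in the conductor-$\mathcal O$ step, described at the end. It reaches the conductor-$\mathfrak p$ criterion by a different route from the paper.

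\textbf{The conductor-$\mathfrak p$ step.} The paper cites \cite[Corollary 5.2]{LuoGCS}, which says that $V_G(\chi)$ is $\psi$-generic if and only if $e_{\mathrm{sgn}}M_G(\chi)\neq0$, and then applies Theorem \ref{thm:genericity_Z}. You instead derive a version of that criterion from ingredients already in the paper:
\begin{enumerate}
\item the duality $\mathrm{Hom}_{\overline U}(V,\psi)\cong\mathrm{Hom}_G(\mathrm{ind}_{\overline U}^G\overline\psi,\widetilde V)$, which is standard because $G$ and $\overline U$ are unimodular, so $\widetilde{\mathrm{ind}_{\overline U}^G\overline\psi}\cong\mathrm{Ind}_{\overline U}^G\psi$;
\item the type equivalence of Fact \ref{fact:borel};
\item Fact \ref{fact:CS}(2) applied to $\overline\psi$.
\end{enumerate}
The conventions do produce $e_{\mathrm{sgn}}$, because Fact \ref{fact:CS}(2) identifies $(\mathrm{ind}_{\overline U}^G\overline\psi)^I$ with $\mathcal He_{\mathrm{sgn}}$ for the same left action used in Fact \ref{fact:borel}. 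You land on $e_{\mathrm{sgn}}M_G(\bar\chi)$ rather than $e_{\mathrm{sgn}}M_G(\chi)$. You correctly neutralize the conjugate parameter using $\bar Z=Z$, which follows from $\bar f_a=f_{-a}$.

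Your route is more self-contained. Combined with $e_{\mathrm{sgn}}M=\mathbb C\,\Omega^-v_K$ from Theorem \ref{thm:genericity_Z}, it also gives $\dim\mathrm{Hom}_{\overline U}(V_G(\chi),\psi)\le1$ for free. The paper's route is shorter but depends on the external result. Your proof of the second form of (4), via $f_{-a}=-q_a\theta_{m_aa^\vee}f_a^{\mathrm{inv}}$ and $\sum_{a\in\Sigma}m_aa^\vee=0$, is the paper's pairing computation with different bookkeeping.

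\textbf{The sign error in the conductor-$\mathcal O$ step.} Your direction of translation is reversed. If $\lambda(uv)=\omega(u)\lambda(v)$, then $\lambda\circ t_{\rho^\vee}$ transforms under the character $u\mapsto\omega(t_{\rho^\vee}ut_{\rho^\vee}^{-1})$. By \eqref{eq:root_coordinate_shift}, its simple-root components are $c\mapsto\omega_\alpha(\varpi c)$. These are trivial on $\mathfrak p_{E_\alpha}^{-1}$, so this character has neither conductor $\mathfrak p$ nor conductor $\mathcal O$.

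The character $\psi(u)=\omega(t_{-\rho^\vee}ut_{\rho^\vee})$ of Lemma \ref{lem:conductor_shift} is reached by $\lambda\mapsto\lambda\circ t_{-\rho^\vee}$. Equivalently, as in the paper, use $\ell\mapsto\ell\circ t_{\rho^\vee}$ from $\mathrm{Hom}_{\overline U}(V,\psi)$ to $\mathrm{Hom}_{\overline U}(V,\omega)$. Since $t_{-\rho^\vee}$ exists when $\rho^\vee\in X$, this is a one-symbol fix and does not affect the rest of your argument.
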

\begin{proof}
Let $\psi$ be any character of $\overline{U}$ of conductor $\mathfrak{p}$. The genericity criterion in \cite[Corollary 5.2]{LuoGCS} states that
$$V_G(\chi)\text{ is }\psi\text{-generic}\iff e_{\mathrm{sgn}}M_G(\chi)\neq0.$$
By Theorem \ref{thm:genericity_Z}, the right side is equivalent to $\chi(Z)\neq0$. This proves the equivalence of (1), the alternative in (3) involving a character of conductor $\mathfrak{p}$, and (4).

Now let $\omega$ be a character of $\overline U$ of conductor $\mathcal{O}$ and set
$$\psi'(u):=\omega(t_{-\rho^\vee}ut_{\rho^\vee}).$$
The calculation in Lemma \ref{lem:conductor_shift} shows that $\psi'$ has conductor $\mathfrak p$. The map
$$\mathrm{Hom}_{\overline U}\big(V_G(\chi),\psi'\big)\longrightarrow
\mathrm{Hom}_{\overline U}\big(V_G(\chi),\omega\big),\qquad
\ell\longmapsto\big[v\mapsto\ell(t_{\rho^\vee}v)\big],$$
is an isomorphism. Thus the case of conductor $\mathcal O$ reduces to the case of conductor $\mathfrak p$, proving the remaining equivalences.

Finally, for each $a\in\Sigma^+$, pair the factors attached to $a$ and $-a$. Apply $(1-cx)(1-cx^{-1}) = c^2(1-c^{-1}x)(1-c^{-1}x^{-1})$ with $c=q_a$ when $a$ is not divisible, and with $c=c_1(a)$ and $c=-c_2(a)$ when $a$ is divisible. Since $c_1(a)^2c_2(a)^2 = q_a^2$, the accumulated constant is $\prod_{a\in\Sigma^+}q_a^2 = q_{w_0}^2$, and we obtain $Z = q_{w_0}^2\prod_{a\in\Sigma}f_a^{\mathrm{inv}}$. This proves the second form in (4).
\end{proof}

\begin{remark}\label{rem:Li}
Li's criterion \cite[Theorem 2.2]{Li92}, applied componentwise, is precisely condition (4). For a non-divisible relative root $\beta$, put $a=\tilde\beta\in\Sigma$. Li's parameters satisfy $q_{\beta/2}^{1/2}q_\beta=c_1(a)$ and $q_{\beta/2}^{1/2}=c_2(a)$, where $q_{\beta/2}=1$ on a reduced ray. Let $z_\chi:X\to\mathbb C^\times$ represent $\chi$, and write $z$ for the unramified quasicharacter occurring in Li's theorem. Our convention $t_\lambda=\lambda(\varpi^{-1})$ implies $z(a_\beta)=z_\chi(-a^\vee)$. Thus the zeta factor $\xi_\beta(z)$ in Li is $f_a^{\mathrm{inv}}$ evaluated at $z_\chi$.

The equivalence of the conductor $\mathfrak{p}$ and conductor $\mathcal{O}$ statements uses translation by $t_{\rho^\vee}$, hence the assumption $\rho^\vee\in X$. For general unramified $G$, Section \ref{sec:extended} proves the conductor $\mathcal{O}$ statement, while Remark \ref{rem:SL2} shows that the conductor $\mathfrak{p}$ statement may fail.
\end{remark}

\section{The Extended Group \texorpdfstring{$G''$}{G''} and the General Case}\label{sec:extended}

In this final section, $G$ is an arbitrary unramified group over $F$ as in Section \ref{sec:notation}, and the assumption $\rho^\vee \in X$ is dropped. The notation from Section \ref{sec:notation} remains in force. Lemma \ref{lem:whittaker_support} was stated in this generality, and Lemma \ref{lem:pairing} does not use $\rho^\vee \in X$. We use both freely below, together with the lattice $X''$ and the algebra $\mathcal{A}''$ of Section \ref{subsec:chV}.

What fails without $\rho^\vee \in X$ is the existence of the translation element $t_{\rho^\vee}$, of the weights $\theta_{\pm\rho^\vee}$, and of the factors $\Omega^{\pm}$. We restore them by embedding the situation into a central quotient group $G''$ whose cocharacter lattice contains $\rho^\vee$.

\subsection{The extended group}

Recall that $E/F$ is the minimal splitting extension of $G$, with Frobenius generator $\sigma$.
Set
$$X_T'' := X_*(T)+\mathbb Z\rho^\vee
\subseteq X_*(T)\otimes_{\mathbb Z}\mathbb Q,\qquad
X'' := X+\mathbb Z\rho^\vee
\subseteq X\otimes_{\mathbb Z}\mathbb Q.$$
The lattice $X_T''$ is $\sigma$-stable and $(X_T'')^\sigma=X''$ because $\sigma$ fixes $\rho^\vee$ and $X_*(T)^\sigma=X$. Moreover, $2\rho^\vee$ is a sum of absolute coroots, so both $X_T''/X_*(T)$ and $X''/X$ have order at most two.

Let $\Lambda:=X^*(T):=\operatorname{Hom}_{E\text{-grp}}(T_E,\mathbb G_{m,E})$, and let $\Lambda'':=\{y\in\Lambda\mid \langle y,\rho^\vee\rangle\in\mathbb Z\}$ be the lattice dual to $X_T''$. It is $\sigma$-stable and satisfies $[\Lambda:\Lambda'']=[X_T'':X_*(T)]$.
Every absolute root lies in $\Lambda''$. Indeed, an absolute root is an integral combination of absolute simple roots, each of which pairs with $\rho^\vee$ to $1$. The absolute coroots already lie in $X_*(T)\subseteq X_T''$.

By Galois descent and the anti-equivalence between groups of multiplicative type and their character groups, the finite abelian group $\Lambda/\Lambda''$ with its induced $\sigma$-action determines a finite flat $\mathcal O$-group $C_{\mathcal O}$ of multiplicative type \cite[Proposition B.3.4]{ConradRGS}. The quotient $\Lambda\twoheadrightarrow\Lambda/\Lambda''$ induces a closed immersion $C_{\mathcal O}\hookrightarrow T_{\mathcal O}$ by \cite[Lemma B.1.3]{ConradRGS}. Every absolute root restricts trivially to $(C_{\mathcal O})_{\mathcal O_E}$ because it belongs to $\Lambda''$. Thus $(C_{\mathcal O})_{\mathcal O_E}$ lies in the scheme-theoretic center of $G_{\mathcal O_E}$ by \cite[Example 5.1.7]{ConradRGS}, and descent shows that $C_{\mathcal O}$ is central in $G_{\mathcal O}$.

Let $C\subseteq T$ be the generic fiber of $C_{\mathcal O}$. It has character group $\Lambda/\Lambda''$ and order $[\Lambda:\Lambda'']$. Scheme-theoretically, $C_E=\bigcap_{y\in\Lambda''}\ker\bigl(y:T_E\to\mathbb G_m\bigr)$.

Define $G_{\mathcal O}'':=G_{\mathcal O}/C_{\mathcal O}$ and $T_{\mathcal O}'':=T_{\mathcal O}/C_{\mathcal O}$, and let $p_{\mathcal O}:G_{\mathcal O}\to G_{\mathcal O}''$ be the quotient map.

Let $G''$, $T''$, and $p:G\to G''$ be the respective generic fibers. Put $S'':=p(S)$, $U'':=p(U)$, $\overline U'':=p(\overline U)$, $B'':=B/C$, and $\overline B'':=\overline B/C$.

Let $x_\gamma:\mathbb G_{a,\mathcal O_E}\xrightarrow{\sim}U_{\gamma,\mathcal O_E}$, $\gamma\in\Phi_{\mathrm{abs}}$, be the absolute root-group coordinates of the $\sigma$-equivariant Chevalley--Steinberg system fixed in Section \ref{sec:notation}. For each absolute root $\gamma$, let $x_\gamma'':=p_{\mathcal O_E}\circ x_\gamma$, where $p_{\mathcal O_E}$ is the base change of $p_{\mathcal O}$ to $\mathcal O_E$.

\begin{lemma}\label{lem:extended_group}
The group scheme $G_{\mathcal O}''$ is reductive with maximal torus $T_{\mathcal O}''$, and the maps $x_\gamma''$ form compatible $\sigma$-equivariant root-group coordinates over $\mathcal O_E$. Its generic fiber $G''$ is an unramified connected reductive group over $F$ with maximal torus $T''$, maximal split torus $S''$, and Borel subgroup $B''$. Moreover,
$$X_*(T'') = X_T'', \qquad X^*(T'') = \Lambda'', \qquad X_*(S'') = X'',$$
and $G$ and $G''$ have the same absolute and relative root systems, coroots, Weyl group $W$, system $\Sigma$, and parameters. The central isogeny $p$ restricts to an isomorphism
$U_\beta\xrightarrow{\sim}U_\beta''$ for every non-divisible relative root $\beta$, compatibly with the root-group filtrations. It therefore induces isomorphisms $U\xrightarrow{\sim}U''$ and $\overline U\xrightarrow{\sim}\overline U''$.
\end{lemma}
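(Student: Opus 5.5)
The plan is to reduce everything to standard structure theory of reductive group schemes over $\mathcal O$ and to the character-lattice calculus of central isogenies. I would work over $\mathcal O_E$ first, where $G_{\mathcal O_E}$ is split, and then descend along the $\sigma$-action.

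\textbf{Step 1: $G''_{\mathcal O}$ is reductive with maximal torus $T''_{\mathcal O}$.} Since $C_{\mathcal O}$ is a central finite flat subgroup of multiplicative type, the quotient $G_{\mathcal O}/C_{\mathcal O}$ exists as a reductive $\mathcal O$-group scheme. For this I would cite the quotient results for central multiplicative-type subgroups in Conrad's notes (quotients by central flat closed subgroups of multiplicative type are reductive, and the image of a maximal torus is a maximal torus). For the torus, the anti-equivalence gives
$$X^*(T''_{\mathcal O_E})=\ker\bigl(\Lambda\to\Lambda/\Lambda''\bigr)=\Lambda''.$$
Hence $X_*(T'')$ is the dual lattice $X''_T$. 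All of this is compatible with $\sigma$, because $C_{\mathcal O}$ is defined over $\mathcal O$.

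\textbf{Step 2: root data and coordinates.} A central isogeny does not change roots or coroots. Every absolute root lies in $\Lambda''$ and every coroot lies in $X_*(T)\subseteq X''_T$, so $(\Lambda'',\Phi_{\mathrm{abs}},X''_T,\Phi^\vee_{\mathrm{abs}})$ is the root datum of $G''_{\mathcal O_E}$. Next I would show that $p$ maps each absolute root group isomorphically onto its image. The kernel of $p$ restricted to $U_\gamma$ is $U_\gamma\cap C$, which is trivial because a unipotent group meets a multiplicative-type group trivially. The image $p(U_\gamma)$ is normalized by $T''$ with weight $\gamma$. Therefore $x''_\gamma$ is a root-group coordinate, and the relations of a Chevalley system (including $\sigma$-equivariance) are inherited from those of the $x_\gamma$. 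Taking $\sigma$-invariants then gives:
\begin{itemize}
\item $X_*(S'')=(X''_T)^\sigma=X''$, and $S''$ is the maximal split torus;
\item $T''$ is its centralizer, $B''$ is a Borel subgroup, and $G''$ is quasi-split and split over $E$, hence unramified;
\item the relative roots, the Weyl group $W$, and $\Sigma$ coincide with those of $G$, since they are computed from the same $\sigma$-action on the same absolute roots and coroots.
\end{itemize}

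\textbf{Step 3: the relative root groups and the parameters.} For a non-divisible $\beta$, the group $U_\beta$ is generated by the absolute root groups $U_\gamma$ with $\gamma|_S\in\{\beta,2\beta\}$. The map $p$ restricted to $U_{\beta}$ is a homomorphism of unipotent groups with trivial kernel (again $U_\beta\cap C=1$), and it is surjective onto $U''_\beta$ by Step 2. It is an isomorphism over $\mathcal O$ because it is an isomorphism on the integral models built from the $x_\gamma$, $x''_\gamma$. The relative coordinates $x_{[\beta]}$ of \cite[Construction 2.9.16]{KP} are built from these absolute coordinates, so $p\circ x_{[\beta]}=x''_{[\beta]}$. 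By \cite[Lemma 9.6.3]{KP}, this gives compatibility with the filtrations $U_{\beta,0}$ and $U_{\beta,0+}$. The parameters $q_a$ are the indices $|U_{-\alpha,0}/U_{-\alpha,0+}|$, and $q_{a*}$ comes from the rank-one tables, which depend only on $d_\alpha$ and $d_{2\alpha}$; hence both coincide. Finally, direct spanning \cite[Proposition 7.3.12]{KP} assembles these into isomorphisms $U\cong U''$ and $\overline U\cong\overline U''$.

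\textbf{Expected obstacle.} I expect the main difficulty to be Step 3 at the integral level, for the multipliable roots. There the group $U_\beta$ is non-abelian, and $p$ must respect the integral structure. I would handle this by working with the $\mathcal O$-smooth unipotent subgroup schemes generated by the images of the absolute coordinates. Injectivity comes from $U\cap C=1$ scheme-theoretically: $C$ is of multiplicative type, so its intersection with a smooth unipotent group with connected fibers is trivial. Flatness over $\mathcal O$ then upgrades the fiberwise bijection to an isomorphism.
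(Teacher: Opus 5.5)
Your proposal is correct and follows essentially the same route as the paper: Conrad's quotient result gives reductivity, character duality gives the lattices $\Lambda''$, $X_T''$ and $X''$, the root groups are preserved over $\mathcal O_E$ and then descended, and the filtration lemma of Kaletha--Prasad together with direct spanning finishes the argument. The only difference is that the paper cites Conrad's Lemma 6.1.4 and Example 6.1.9 for the isomorphisms of root subgroup schemes, whereas you derive them directly from $U_\gamma\cap C=1$ and the fibral isomorphism criterion.
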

\begin{proof}
By \cite[Corollary 3.3.5]{ConradRGS}, $G_{\mathcal O}''$ is reductive with maximal torus $T_{\mathcal O}''$. Character duality gives $X^*(T'')=\Lambda''$ and $X_*(T'')=\operatorname{Hom}(\Lambda'',\mathbb Z)=X_T''$. The map on character lattices is the inclusion $\Lambda''\hookrightarrow\Lambda$, so the absolute roots and coroots are unchanged. The image $S''$ of $S$ is a maximal split torus with $X_*(S'')=(X_T'')^\sigma=X''$.
Moreover, $B''=B/C$ is an $F$-Borel subgroup and $G_E''$ is split. Thus $G''$ is unramified.

Over $\mathcal O_E$, the central isogeny $p_{\mathcal O_E}$ restricts to isomorphisms between corresponding root subgroup schemes \cite[Lemma 6.1.4 and Example 6.1.9]{ConradRGS}. The maps $x_\gamma''$ are therefore compatible $\sigma$-equivariant root-group coordinates. After descent, they identify $U_\beta$ with $U_\beta''$ for every non-divisible relative root $\beta$. They also identify the filtrations on $U_\beta/U_{2\beta}$ and on $U_{2\beta}$ when the latter occurs. Hence the relative root systems, the fields $E_\beta$, the affine-root filtrations, and all Iwahori--Hecke parameters, including $q_{a*}$, agree. The root-group isomorphisms assemble to the asserted isomorphisms on $U$ and $\overline U$.
\end{proof}

We mark objects attached to $G''$ with double primes. Put $K'':=G_{\mathcal O}''(\mathcal O)$ and $t_\lambda'':=\lambda(\varpi^{-1})$ for $\lambda\in X''$, and let $I''$ be the Iwahori subgroup satisfying $I''\cap B''=K''\cap B''$. Let $\mathcal H''$ and $\mathcal H_{K''}''$ be the corresponding Iwahori and spherical Hecke algebras, and write $e_{K''}\in\mathcal H''$ for the spherical idempotent.

For $\lambda\in(X'')^-$, define $\theta''_\lambda:=(q''_{t''_\lambda})^{-1/2}T''_{t''_\lambda}$ and extend the definition multiplicatively to $X''$. The map $\theta_\lambda\mapsto\theta_\lambda''$ identifies the abstract group algebra $\mathcal A''$ of Section \ref{subsec:chV} with the Bernstein subalgebra $\operatorname{span}_{\mathbb C}\{\theta_\lambda'':\lambda\in X''\}\subseteq\mathcal H''$.

The quotient map satisfies $p(K)\subseteq K''$, $p(I)\subseteq I''$, and $p(t_\lambda)=t_\lambda''$ for $\lambda\in X$.
Normalize the two Haar measures by $\operatorname{vol}(I)=\operatorname{vol}(I'')=1$. Lemma \ref{lem:extended_group} then shows that
$$W''(q)=W(q),\qquad
\operatorname{vol}(K)=\operatorname{vol}(K'')=W(q),\qquad
\delta_{\overline B''}(t_y'')=\delta_{\overline B}(t_y)\quad(y\in X).$$

Since $\rho^\vee \in X''$, all results of Sections \ref{sec:bbf}--\ref{sec:cs} apply to $G''$.

\subsection{The Hecke algebra embedding}

Together with Lemma \ref{lem:extended_group}, the descriptions of the Iwahori--Weyl group and its affine Coxeter structure in \cite[Section 2.9 and Proposition 3.2.2]{Rostami} identify
$$\widetilde{W} = W \ltimes X \ \subseteq\ W \ltimes X'' = \widetilde{W}''.$$
The two groups have the same affine Coxeter subgroup and parameters, and the length function on $\widetilde W''$ restricts to that on $\widetilde W$. The Iwahori--Matsumoto bases define an injective linear map
$$j:\mathcal H\longrightarrow\mathcal H'',\qquad j(T_w):=T_w''\quad(w\in\widetilde W).$$
The Iwahori--Matsumoto relations in \cite[Proposition 4.1.1]{Rostami} are preserved by $j$, so $j$ is an algebra homomorphism.

\begin{lemma}\label{lem:j_embedding}
The restriction of the algebra embedding $j$ to $\mathcal A$ is the natural inclusion $j(\theta_\lambda)=\theta''_\lambda$ for $\lambda\in X$. Moreover, $j$ has the following properties.
\begin{enumerate}
\item $j(e_K) = e_{K''}$ and $j(F e_K) = F e_{K''}$ for $F \in \mathcal{A}^W$. Thus $j$ restricts to a unital algebra embedding $j|_{\mathcal{H}_K} : \mathcal{H}_K \hookrightarrow \mathcal{H}''_{K''}$. Under the Satake isomorphisms of Theorem \ref{thm:spherical_iso}, this embedding corresponds to the inclusion $\mathcal{A}^W \subseteq (\mathcal{A}'')^W$.
\item $j(\mathbf{1}_{K t_\mu K}) = \mathbf{1}_{K'' t''_\mu K''}$ for every $\mu \in X^+$.
\end{enumerate}
\end{lemma}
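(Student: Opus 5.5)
We will work directly from the definition $j(T_w)=T''_w$ on the Iwahori--Matsumoto basis, together with the facts that $\widetilde W\subseteq\widetilde W''$ preserves lengths and that the parameters agree, so that $q''_w=q_w$ for all $w\in\widetilde W$.

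\emph{Bernstein elements.} For $\lambda\in X^-$, the element $\lambda$ also lies in $(X'')^-$, because dominance is defined through the same simple roots. Since $q''_{t''_\lambda}=q_{t_\lambda}$, we get
$$j(\theta_\lambda)=q_{t_\lambda}^{-1/2}T''_{t''_\lambda}=\theta''_\lambda.$$
For general $\lambda=\lambda_1-\lambda_2$ with $\lambda_i\in X^-$, the element $j(\theta_{\lambda_2})$ is invertible, and $j$ is a unital algebra homomorphism. Hence $j(\theta_\lambda)=j(\theta_{\lambda_1})j(\theta_{\lambda_2})^{-1}=\theta''_{\lambda_1}(\theta''_{\lambda_2})^{-1}=\theta''_\lambda$.

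\emph{Part (1).} The finite Weyl group $W$ and the function $w\mapsto q_w$ on $W$ are the same for $G$ and $G''$. Therefore $j(e_K)=W(q)^{-1}\sum_{w\in W}T''_w=e_{K''}$, using $W''(q)=W(q)$. Multiplicativity then gives $j(Fe_K)=j(F)j(e_K)=Fe_{K''}$ for $F\in\mathcal A^W\subseteq(\mathcal A'')^W$. Since $j$ is injective and multiplicative, it maps $e_K\mathcal He_K$ into $e_{K''}\mathcal H''e_{K''}$ and sends the unit $e_K$ to the unit $e_{K''}$. Composing with the Satake isomorphisms of Theorem \ref{thm:spherical_iso} for $G$ and $G''$ turns this into the inclusion $\mathcal A^W\subseteq(\mathcal A'')^W$.

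\emph{Part (2).} This step needs care. We first express $\mathbf 1_{Kt_\mu K}$ in the Iwahori--Matsumoto basis. Since $K=\bigsqcup_{w\in W}IwI$, the double coset $Kt_\mu K$ is the disjoint union of the double cosets $IxI$ with $x$ running over the $W\times W$ double coset $Wt_\mu W\subseteq\widetilde W$. Therefore
$$\mathbf 1_{Kt_\mu K}=\sum_{x\in Wt_\mu W}T_x.$$
The identical argument for $G''$, with the same finite Weyl group $W$ and with $t''_\mu$ the image of $t_\mu$ in $\widetilde W''$, gives
$$\mathbf 1_{K''t''_\mu K''}=\sum_{x\in Wt''_\mu W}T''_x.$$
The double coset $Wt_\mu W$ computed in $\widetilde W$ coincides with the one computed inside $\widetilde W''$, because $W\subseteq\widetilde W$. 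Applying $j$ termwise then proves the claim.

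\emph{Expected obstacle.} The main subtle point is justifying the decomposition of $Kt_\mu K$ into Iwahori double cosets indexed exactly by $Wt_\mu W$. This follows from the Bruhat--Iwahori decomposition $G=\bigsqcup_{x\in\widetilde W}IxI$ together with $K=IWI$. Distinct elements $x$ of the double coset give distinct Iwahori double cosets, so no multiplicities arise.
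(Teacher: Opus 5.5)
Your proposal is correct and follows the paper's proof step by step: the Bernstein elements via $q''_{t''_\lambda}=q_{t_\lambda}$ and multiplicativity, part (1) from the definition of the idempotents and the Satake isomorphism, and part (2) by decomposing both Cartan double cosets into Iwahori double cosets indexed by the same set $Wt_\mu W$ and applying $j$ termwise. You also justify that decomposition, which the paper only asserts, and that justification is fine: the reverse containment, that every $IxI\subseteq Kt_\mu K$ has $x\in Wt_\mu W$, follows from the Cartan decomposition or from the Tits-system product rule.
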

\begin{proof}
For $\lambda \in X^-$, the definitions and the equality $q_{t_\lambda} = q''_{t''_\lambda}$ imply $j(\theta_\lambda) = \theta''_\lambda$. Multiplicativity extends this to all $\lambda \in X$.

(1) The definition of $e_K$ gives $j(e_K) = e_{K''}$, and the preceding description of $j|_{\mathcal A}$ gives $j(Fe_K)=Fe_{K''}$. Theorem \ref{thm:spherical_iso} then identifies the restriction of $j$ with the inclusion $\mathcal A^W\subseteq(\mathcal A'')^W$.

(2) The two Cartan double cosets decompose into Iwahori double cosets indexed by the same set $Wt_\mu W \subseteq \widetilde{W}$:
$$K t_\mu K = \bigsqcup_{w \in W t_\mu W} IwI, \qquad K'' t''_\mu K'' = \bigsqcup_{w \in W t_\mu W} I''wI''.$$
Applying $j(\mathbf 1_{IwI})=\mathbf 1_{I''wI''}$ termwise proves (2).
\end{proof}

\begin{lemma}\label{lem:cartan_preimage}
For every $\mu \in X^+$, we have $p^{-1}\big(K'' t''_\mu K''\big) = K t_\mu K$ inside $G(F)$. In particular, $p^{-1}(K'') = K$, and for $h \in \mathcal{H}_K$,
$$h(g) = j(h)\big(p(g)\big) \qquad \text{for all } g \in G(F).$$
Moreover, $g K \mapsto p(g)K''$ is a bijection from $K t_\mu K / K$ onto $K'' t''_\mu K'' / K''$.
\end{lemma}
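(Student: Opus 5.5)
The plan is to show the preimage statement first and to read off everything else from it. The key fact is $\ker p = C(F)$. Since $C$ is finite and central in $T$, $C(F)$ sits in the maximal compact subgroup of $T(F)$. Indeed, the elements of $C(F)$ have finite order, so $C(F)\subseteq T(F)_{\mathrm{tors}}$, and the torsion of $T(F)$ maps to zero in $T(F)/T(\mathcal O)\cong X$, which is torsion-free; hence $C(F)\subseteq T(\mathcal O)\subseteq K$. With this, $p$ descends to an injective map $G(F)/C(F)\hookrightarrow G''(F)$. The restriction of $p$ to $K$ has image in $K''$, and $p(t_\lambda)=t''_\lambda$.

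To get $p^{-1}(K''t''_\mu K'')=Kt_\mu K$, I will take $g\in G(F)$ and use the Cartan decomposition to write $g\in Kt_\nu K$ with $\nu\in X^+$. Then $p(g)\in K''t''_\nu K''$. The dominant cone $X^+$ sits inside $(X'')^+$, because the dominance conditions are given by the same simple roots. The Cartan decomposition for $G''$ is disjoint, so if $p(g)\in K''t''_\mu K''$ then $\nu=\mu$. This shows $p^{-1}(K''t''_\mu K'')\subseteq Kt_\mu K$. The reverse inclusion is immediate. Taking $\mu=0$ gives $p^{-1}(K'')=K$.

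For the formula $h(g)=j(h)(p(g))$, it suffices by linearity to treat the basis $h=\mathbf 1_{Kt_\mu K}$. Lemma \ref{lem:j_embedding}(2) gives $j(h)=\mathbf 1_{K''t''_\mu K''}$, and $\mathbf 1_{K''t''_\mu K''}\circ p=\mathbf 1_{p^{-1}(K''t''_\mu K'')}=\mathbf 1_{Kt_\mu K}$ by the preimage statement.

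For the bijection, the map $gK\mapsto p(g)K''$ is well defined because $p(K)\subseteq K''$. It is injective: if $p(g)K''=p(g')K''$, then $p(g'^{-1}g)\in K''$, so $g'^{-1}g\in p^{-1}(K'')=K$. Surjectivity is the main obstacle, because $p$ need not be surjective on $F$-points. I would handle it by counting cosets. Normalize volumes so that $\mathrm{vol}(I)=\mathrm{vol}(I'')=1$. Then
$$|Kt_\mu K/K|=\mathrm{vol}(Kt_\mu K)/W(q)=\textstyle\sum_{w\in Wt_\mu W}q_w/W(q).$$
The same expression holds for $G''$ with $q''_w=q_w$, using the decomposition into Iwahori double cosets indexed by the same set $Wt_\mu W$ from Lemma \ref{lem:j_embedding}(2) and the equality of parameters from Lemma \ref{lem:extended_group}. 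An injective map between finite sets of equal cardinality is a bijection, which finishes the proof.
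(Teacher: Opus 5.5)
Your proposal is correct and follows the paper's proof essentially step for step: the Cartan-decomposition argument gives the preimage, linearity gives $h=j(h)\circ p$, injectivity comes from $p^{-1}(K'')=K$, and the volume count $\sum_{w\in Wt_\mu W}q_w/W(q)$ gives surjectivity. Your preliminary observation that $\ker p=C(F)\subseteq T(\mathcal O)\subseteq K$ is true but plays no role, since it already follows from $p^{-1}(K'')=K$.
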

\begin{proof}
The inclusion $p(Kt_\mu K) \subseteq K''t''_\mu K''$ is clear. Conversely, if $p(g) \in K''t''_\mu K''$ and $g \in Kt_\lambda K$ with $\lambda \in X^+$, then $p(g) \in K''t''_\lambda K''$, and disjointness of the Cartan decomposition of $G''(F)$ over $(X'')^+ \supseteq X^+$ forces $\lambda = \mu$. The identity $h = j(h)\circ p$ follows for $h = \mathbf{1}_{Kt_\mu K}$ from Lemma \ref{lem:j_embedding}(2) and $p^{-1}(K''t''_\mu K'') = Kt_\mu K$, and extends linearly.

For the last claim, injectivity holds because $p^{-1}(K'') = K$. The Iwahori double-coset decompositions and $\mathrm{vol}(IwI) = q_w$, with the same index set $Wt_\mu W$ and the same parameters on both sides, lead to
$$\mathrm{vol}_G(Kt_\mu K) = \sum_{w \in Wt_\mu W} q_w = \mathrm{vol}_{G''}(K''t''_\mu K'').$$
Dividing by $\mathrm{vol}(K) = W(q) = \mathrm{vol}(K'')$ shows that both coset spaces have the same finite cardinality, so the injection is a bijection.
\end{proof}

\begin{corollary}\label{cor:hecke_action_compatibility}
Let $\pi''$ be a smooth representation of $G''(F)$ and let $v \in \pi''$ be fixed by $K''$. View $\pi'' \circ p$ as a smooth representation of $G(F)$. Then for every $h \in \mathcal{H}_K$,
$$(\pi''\circ p)(h)\, v = \pi''\big(j(h)\big)\, v.$$
\end{corollary}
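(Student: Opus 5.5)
By linearity it suffices to treat $h=\mathbf 1_{Kt_\mu K}$ for $\mu\in X^+$. These functions form a basis of $\mathcal H_K=C_c(K\backslash G/K)$ by the Cartan decomposition. For such $h$, Lemma \ref{lem:j_embedding}(2) gives $j(h)=\mathbf 1_{K''t''_\mu K''}$, so the claim reduces to comparing two integrals:
$$\int_{Kt_\mu K}\pi''(p(g))v\,dg \qquad\text{and}\qquad \int_{K''t''_\mu K''}\pi''(g'')v\,dg''.$$

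I will rewrite each integral as a finite sum over right cosets. Since $v$ is $K''$-fixed and $p(K)\subseteq K''$, the vector $v$ is $K$-fixed in $\pi''\circ p$. The integrand on the left is therefore right $K$-invariant. Decompose $Kt_\mu K=\bigsqcup_i g_iK$ (a finite union). Then the left side equals
$$\mathrm{vol}(K)\sum_i\pi''(p(g_i))v.$$
In the same way, the right side equals
$$\mathrm{vol}(K'')\sum_{g''K''}\pi''(g'')v,$$
with the sum over $K''t''_\mu K''/K''$.

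By Lemma \ref{lem:cartan_preimage}, the map $g_iK\mapsto p(g_i)K''$ is a bijection between these coset spaces. The two sums therefore agree term by term. The measures also agree, since $\mathrm{vol}(K)=\mathrm{vol}(K'')=W(q)$ by the normalization $\mathrm{vol}(I)=\mathrm{vol}(I'')=1$. This gives the equality for the basis elements, and linearity finishes the proof.

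The only point needing care is that $\pi''(j(h))v$ is computed with the Haar measure on $G''(F)$ normalized by $\mathrm{vol}(I'')=1$, and that $(\pi''\circ p)(h)$ uses the measure on $G(F)$ normalized by $\mathrm{vol}(I)=1$. Once both integrals are written as coset sums weighted by $\mathrm{vol}(K)$ and $\mathrm{vol}(K'')$, the bijection of Lemma \ref{lem:cartan_preimage} carries all the content. I expect no genuine obstacle here.
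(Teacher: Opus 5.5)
Your proposal is correct and is essentially the paper's argument. The paper also reduces to $h=\mathbf 1_{Kt_\mu K}$ and concludes from the coset bijection of Lemma \ref{lem:cartan_preimage} together with $\mathrm{vol}(K)=\mathrm{vol}(K'')$; your write-up makes explicit the coset-sum comparison, and the right $K$-invariance of $g\mapsto\pi''(p(g))v$, that the paper's one-line proof leaves implicit.
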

\begin{proof}
By linearity, it suffices to take $h=\mathbf{1}_{Kt_\mu K}$. The asserted identity follows from Lemma \ref{lem:cartan_preimage} and the equality $\mathrm{vol}(K)=\mathrm{vol}(K'')$.
\end{proof}

\subsection{Transfer of freeness and the general Casselman--Shalika formula}

Let $\omega$ be an arbitrary character of $\overline U$ of conductor $\mathcal O$. Since $p|_{\overline U}:\overline U\to\overline U''$ is an isomorphism, define
$$\omega''(p(u)):=\omega(u)\qquad(u\in\overline U).$$
The filtration compatibility in Lemma \ref{lem:extended_group} shows that $\omega''$ also has conductor $\mathcal O$.

Let $\Phi^\omega_y$, $y\in X^+$, be the basis functions defined in Lemma \ref{lem:whittaker_support}(2) for $G$, and put $\Phi''_z:=\Phi^{\omega''}_z$ for $z\in(X'')^+$. The bases in that part define an injective linear map
$$\iota_V : (\mathrm{ind}_{\overline{U}}^G\,\omega)^K \hookrightarrow (\mathrm{ind}_{\overline{U}''}^{G''}\,\omega'')^{K''}, \qquad \iota_V(\Phi^\omega_y) := \Phi''_y \quad (y \in X^+),$$
a correspondence of basis elements, not composition with $p$, since $p(K)$ may be a proper subgroup of $K''$.

\begin{lemma}\label{lem:transfer}
For $h \in \mathcal H_K$ and $f \in (\mathrm{ind}_{\overline U}^G\omega)^K$,
$$\iota_V(f*h)=\iota_V(f)*j(h).$$
\end{lemma}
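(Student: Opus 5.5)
By linearity it suffices to treat $f=\Phi^\omega_y$ with $y\in X^+$ and $h=\mathbf 1_{Kt_\mu K}$ with $\mu\in X^+$. Both sides are $K''$-invariant elements of $\mathrm{ind}_{\overline U''}^{G''}\omega''$. By Lemma \ref{lem:whittaker_support}(1) applied to $G''$, such a function is determined by its values at $t''_z$ for $z\in(X'')^+$. So the plan is to compute $(\Phi^\omega_y*\mathbf 1_{Kt_\mu K})(t_x)$ on $G$ and $(\Phi''_y*\mathbf 1_{K''t''_\mu K''})(t''_z)$ on $G''$, and compare them.

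On $G$, the definition of right convolution and Lemma \ref{lem:cartan_preimage} (coset decomposition) give
\[
(\Phi^\omega_y*\mathbf 1_{Kt_\mu K})(g)=\int_G\Phi^\omega_y(gh^{-1})\mathbf 1_{Kt_\mu K}(h)\,dh=\mathrm{vol}(K)\sum_{\gamma\in Kt_\mu K/K}\Phi^\omega_y(g\gamma^{-1}).
\]
Here I use $Kt_\mu K=\bigsqcup \gamma K$ and the right $K$-invariance of $\Phi^\omega_y$. For the second expression, note that $(Kt_\mu K)^{-1}=Kt_{-w_0\mu}K$, which is again a Cartan coset; equivalently, one can sum over $K\backslash Kt_\mu K$. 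The analogous formula holds on $G''$. Lemma \ref{lem:cartan_preimage} says that $\gamma K\mapsto p(\gamma)K''$ is a bijection of coset spaces, and that $\mathrm{vol}(K)=\mathrm{vol}(K'')$. The same holds for the inverse Cartan coset, since $-w_0\mu\in X^+$.

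Evaluating at $g=t_x$, we need the identity
\[
\Phi^\omega_y(t_x\gamma^{-1})=\Phi''_y\bigl(t''_x\,p(\gamma)^{-1}\bigr)\qquad(\gamma\in G),
\]
that is, $\Phi''_y\circ p=\Phi^\omega_y$. This is the key compatibility, and I expect it to be the main obstacle. To prove it, I would use the Iwasawa decomposition of $G$: write $\gamma'=u t_\lambda k$. Then $p(\gamma')=p(u)\,t''_\lambda\,p(k)$ with $p(k)\in K''$. Hence $\Phi''_y(p(\gamma'))$ is nonzero exactly when $\lambda=y$, because the Iwasawa decomposition of $G''$ is disjoint over $X''\supseteq X$. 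In that case its value is $\delta_{\overline B''}(t''_y)^{1/2}\omega''(p(u))=\delta_{\overline B}(t_y)^{1/2}\omega(u)=\Phi^\omega_y(\gamma')$, using Lemma \ref{lem:extended_group} and the equality of modular characters recorded after it. So pullback along $p$ sends $\Phi''_y$ to $\Phi^\omega_y$.

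Combining these facts gives $(\iota_V(f)*j(h))(t''_x)=(f*h)(t_x)$ for all $x\in X^+$, with $j(h)=\mathbf 1_{K''t''_\mu K''}$ by Lemma \ref{lem:j_embedding}(2). It remains to check the values at $t''_z$ with $z\in(X'')^+\setminus X$; these must vanish on the left side $\iota_V(f*h)$. Expanding $f*h=\sum_x c_x\Phi^\omega_x$, we get $\iota_V(f*h)=\sum_x c_x\Phi''_x$ with $c_x=\delta_{\overline B}(t_x)^{-1/2}(f*h)(t_x)$. This agrees with $\iota_V(f)*j(h)$ at every $t''_x$ with $x\in X$. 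For $z\notin X$, every term $t''_z p(\gamma)^{-1}$ lies in $\overline U''t''_{z'}K''$ with $z'-z\in X$. This holds because $p(G)K''$ meets only the Iwasawa cells indexed by the coset $X$; I would verify this by noting that $p(G)\,T''(\mathcal O)$ has image $X$ in $X''$. Since $z'-z\in X$ and $z\notin X$, we get $z'\notin X$, so $\Phi''_y$ vanishes there because $y\in X$. Both sides are therefore zero at such $z$, which completes the comparison.
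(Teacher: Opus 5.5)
Your proof is correct, but it takes a genuinely different route from the paper's, especially in the second half.

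\emph{The paper's route.} It works with the basis $h_\mu$ and evaluates both sides at $t_y$ ($y\in X$) by unfolding the convolution through the Iwasawa integration formula into an integral over $\overline U$. This requires first fixing compatible Haar measures on $T$, $\overline U$ and their $G''$-counterparts. It then matches the integrands via $h=j(h)\circ p$ from Lemma \ref{lem:cartan_preimage}. To exclude components $\Phi''_\nu$ with $\nu\in(X'')^+\setminus X^+$, it invokes Corollary \ref{cor:freeness_O} for $G''$, writing $\Phi''_x*j(h_\mu)=\Phi''_0*(\mathrm{ch}V_x\,\mathrm{ch}V_\mu\,e_{K''})$, and then applies Fact \ref{fact:chV}(2).

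\emph{Your route.} You work with $\mathbf 1_{Kt_\mu K}$, expand the convolution as a finite coset sum, and transport it with the coset bijection of Lemma \ref{lem:cartan_preimage}. You then use the pullback identity $\Phi''_y\circ p=\Phi^\omega_y$. This does not conflict with the paper's caveat about $\iota_V$, which concerns the opposite direction. The cells indexed by $z\notin X$ are handled by the purely geometric observation $t''_z\,p(G)\subseteq\bigcup_{\lambda\in X}\overline U''t''_{z+\lambda}K''$, which misses $\overline U''t''_yK''$ for $y\in X$.

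\emph{What each buys.} Your route is more elementary. It needs no unipotent measure normalizations, and it makes the transfer lemma independent of the Hecke-algebraic results for $G''$ (the BBF identity and Fact \ref{fact:CS}). It also shows more: pullback along $p$ intertwines $j(h)$ with $h$, and $\iota_V$ is a section of this pullback whose image is stable under $j(\mathcal H_K)$. The paper's route is shorter, given machinery it needs anyway for Theorem \ref{thm:freeness_general}, and it never has to ask which Iwasawa cells $p(G)$ meets.

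\emph{Two expository fixes.} First, index the first coset sum by $K\backslash Kt_\mu K$ from the outset; over $Kt_\mu K/K$ the summand $\Phi^\omega_y(g\gamma^{-1})$ depends on the representative. The needed bijection then comes from Lemma \ref{lem:cartan_preimage} applied to $-w_0\mu$, as you note. Second, justify the cell statement exactly as in your compatibility step: use the Iwasawa decomposition of $G$ together with $p(\overline U)=\overline U''$, $p(t_\lambda)=t''_\lambda$, and $p(K)\subseteq K''$, rather than the vaguer remark about $p(G)\,T''(\mathcal O)$.
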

\begin{proof}
By linearity, it suffices to take $f=\Phi^\omega_x$ and $h=h_\mu$ with $x,\mu\in X^+$. Normalize the Haar measures on $T(F)$ and $T''(F)$ by
$$\operatorname{vol}\bigl(T(\mathcal O)\bigr)
=\operatorname{vol}\bigl(T''(\mathcal O)\bigr)=1,$$
and choose the measures on $\overline U(F)$ and $\overline U''(F)$ compatibly with the respective Iwasawa integration formulas. Applying these formulas to $\mathbf 1_K$ and $\mathbf 1_{K''}$ shows that
$$\operatorname{vol}\bigl(\overline U(\mathcal O)\bigr)
=\operatorname{vol}\bigl(\overline U''(\mathcal O)\bigr)=1.$$
Lemma \ref{lem:extended_group} identifies these compact open subgroups, so $p$ carries the first unipotent measure to the second. For $y\in X$, unfolding the convolution using the Iwasawa integration formula produces
$$\big(\Phi^\omega_x*h_\mu\big)(t_y)
=W(q)\,\delta_{\overline B}(t_x)^{-1/2}
\int_{\overline U}h_\mu(t_{-x}\bar u^{-1}t_y)\omega(\bar u)\,d\bar u.$$
The corresponding formula for $G''$ yields
$$\big(\Phi^\omega_x*h_\mu\big)(t_y)=\bigl(\Phi_x''*j(h_\mu)\bigr)(t_y'').$$
Indeed, Lemma \ref{lem:cartan_preimage} identifies the two integrands under $p$, while $\omega''\circ p=\omega$ by the definition of $\omega''$, and $\delta_{\overline B''}(t_x'')=\delta_{\overline B}(t_x)$ by Lemma \ref{lem:extended_group}.

It remains to show that $\Phi_x''*j(h_\mu)$ has no basis component indexed by $(X'')^+\setminus X^+$. The proof of Corollary \ref{cor:freeness_O} for $G''$, applied to $\omega''$, and Lemma \ref{lem:j_embedding}(1) produce
$$\Phi_x''*j(h_\mu)
=\Phi_0''*\bigl(\mathrm{ch}V_x\,\mathrm{ch}V_\mu\,e_{K''}\bigr)
=\sum_{\nu\in(X'')^+}d_\nu\Phi_\nu'',$$
where $\mathrm{ch}V_x\,\mathrm{ch}V_\mu=\sum_\nu d_\nu\,\mathrm{ch}V_\nu$. This product belongs to $\mathcal A^W$, so Fact \ref{fact:chV}(2) implies $d_\nu=0$ unless $\nu\in X^+$. The preceding comparison of values therefore identifies the two basis expansions and proves the lemma.
\end{proof}

\begin{theorem}[Freeness, general case]\label{thm:freeness_general}
Let $G$ be an arbitrary unramified group and let $\omega$ be any character of $\overline U$ of conductor $\mathcal O$. Then for every $y \in X^+$,
$$\Phi^\omega_0 * h_y = \Phi^\omega_y,$$
and $(\mathrm{ind}_{\overline{U}}^G\,\omega)^K$ is a free right $\mathcal{H}_K$-module of rank one generated by $\Phi^\omega_0$.
\end{theorem}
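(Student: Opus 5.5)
The plan is to transfer Corollary \ref{cor:freeness_O} from $G''$ to $G$ through the injective map $\iota_V$ and the compatibility of Lemma \ref{lem:transfer}. Since $\rho^\vee\in X''$, all results of Sections \ref{sec:bbf}--\ref{sec:cs} apply to $G''$. By Lemma \ref{lem:extended_group}, the character $\omega''$ has conductor $\mathcal O$. Hence Corollary \ref{cor:freeness_O} for $G''$, applied to $\omega''$ (its proof works for any character of conductor $\mathcal O$, after choosing $\psi''$ as in Lemma \ref{lem:conductor_shift}), gives
$$\Phi''_0 * h''_z = \Phi''_z, \qquad z\in (X'')^+,$$
where $h''_z=(\mathrm{ch}V_z)e_{K''}$.

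Next, fix $y\in X^+$. By Lemma \ref{lem:j_embedding}(1), $j(h_y)=j\big((\mathrm{ch}V_y)e_K\big)=(\mathrm{ch}V_y)e_{K''}=h''_y$, because $\mathrm{ch}V_y\in\mathcal A^W\subseteq(\mathcal A'')^W$ is the same element in both algebras. Lemma \ref{lem:transfer} then gives
$$\iota_V\big(\Phi^\omega_0*h_y\big)=\iota_V(\Phi^\omega_0)*j(h_y)=\Phi''_0*h''_y=\Phi''_y=\iota_V(\Phi^\omega_y).$$
Since $\iota_V$ is injective, $\Phi^\omega_0*h_y=\Phi^\omega_y$.

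Freeness then follows exactly as in Corollary \ref{cor:freeness_O}. By Fact \ref{fact:chV}(2) and Theorem \ref{thm:spherical_iso}, $\{h_y\}_{y\in X^+}$ is a basis of $\mathcal H_K$. By Lemma \ref{lem:whittaker_support}(2), $\{\Phi^\omega_y\}_{y\in X^+}$ is a basis of $(\mathrm{ind}_{\overline U}^G\omega)^K$. The map $h\mapsto \Phi^\omega_0*h$ sends the first basis bijectively onto the second, so it is a linear isomorphism. It is also right $\mathcal H_K$-linear by associativity of convolution. Therefore the module is free of rank one, generated by $\Phi^\omega_0$.

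The substantive work is already contained in Lemma \ref{lem:transfer}. The only point needing care here is confirming that Corollary \ref{cor:freeness_O} really applies to an arbitrary conductor-$\mathcal O$ character $\omega''$ of $G''$, rather than only to a conjugate $\overline{\omega}$. Since $\overline{\omega''}$ is again of conductor $\mathcal O$, we may apply the corollary to the character whose conjugate is $\omega''$, so there is no obstruction.
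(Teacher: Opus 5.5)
Your proposal is correct and follows the paper's proof essentially step for step. You apply the conductor-$\mathcal O$ freeness argument of Corollary \ref{cor:freeness_O} to $G''$ and $\omega''$, identify $j(h_y)=(\mathrm{ch}V_y)e_{K''}$ via Lemma \ref{lem:j_embedding}(1), transfer through Lemma \ref{lem:transfer}, and conclude using the injectivity of $\iota_V$ together with the two bases; your remark about applying the corollary to $\overline{\omega''}$ makes explicit the conjugation convention that the paper leaves implicit.
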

\begin{proof}
Since $\rho^\vee \in X''$, the proof of Corollary \ref{cor:freeness_O} for $G''$, applied to $\omega''$, establishes
$$\Phi''_0 * \big(\mathrm{ch}V_y\, e_{K''}\big) = \Phi''_y \qquad \big(y \in X^+ \subseteq (X'')^+\big).$$
Fact \ref{fact:chV}(2) shows that $\mathrm{ch}V_y \in \mathcal{A}^W \subseteq (\mathcal{A}'')^W$. By Lemma \ref{lem:j_embedding}(1), $j(h_y) = \mathrm{ch}V_y\, e_{K''}$, so Lemma \ref{lem:transfer} proves
$$\iota_V\big(\Phi^\omega_0 * h_y\big) = \Phi''_0 * \big(\mathrm{ch}V_y e_{K''}\big) = \Phi''_y = \iota_V\big(\Phi^\omega_y\big).$$
Since $\iota_V$ is injective, $\Phi^\omega_0 * h_y = \Phi^\omega_y$. Freeness follows because $\{h_y\mid y\in X^+\}$ is a basis of $\mathcal H_K$ by Fact \ref{fact:chV}(2) and Theorem \ref{thm:spherical_iso}, and $\{\Phi^\omega_y\mid y\in X^+\}$ is a basis of $(\mathrm{ind}_{\overline U}^G\omega)^K$ by Lemma \ref{lem:whittaker_support}(2).
\end{proof}

\begin{theorem}[Casselman--Shalika formula, general case]\label{thm:CS_general}
Let $G$ be an arbitrary unramified group, let $\omega$ be any character of $\overline U$ of conductor $\mathcal O$, and let $0 \neq \mathcal{W} \in (\mathrm{Ind}_{\overline{U}}^G\, \omega)^K$ be a spherical Whittaker function with parameter $\chi$. Then $\mathcal{W}(t_y) = 0$ for $y \notin X^+$, and
\[
\mathcal{W}(t_y) = \delta_{\overline{B}}(t_y)^{1/2}\, \chi(\mathrm{ch}V_y)\, \mathcal{W}(e) \qquad (y \in X^+).
\]
In particular, $\mathcal{W}(e) \neq 0$.
\end{theorem}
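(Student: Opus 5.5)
The plan is to repeat the proof of Theorem \ref{thm:CS_O} verbatim, replacing Corollary \ref{cor:freeness_O} by Theorem \ref{thm:freeness_general}. Neither Lemma \ref{lem:whittaker_support} nor Lemma \ref{lem:pairing} uses the hypothesis $\rho^\vee\in X$. The only input that previously depended on that hypothesis was the identity $\Phi_0*h_y=\Phi_y$, and this is now available for arbitrary unramified $G$. So no passage to $G''$ is needed at this stage; it has already been absorbed into Theorem \ref{thm:freeness_general}.

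First, the vanishing $\mathcal W(t_y)=0$ for $y\notin X^+$ is Lemma \ref{lem:whittaker_support}(1) applied with $\tau=\omega$ of conductor $\mathcal O$. Next, fix $y\in X^+$ and consider the pairing of \eqref{eq:pairing_def} with $\tau=\omega$, so that the compactly induced partner lives in $\mathrm{ind}_{\overline U}^G\overline\omega$. The conjugate $\overline\omega$ is again a character of conductor $\mathcal O$, so Theorem \ref{thm:freeness_general} applies to it and gives $\Phi^{\overline\omega}_0*h_y=\Phi^{\overline\omega}_y$. Lemma \ref{lem:pairing}(2) then gives $\langle\mathcal W,\Phi^{\overline\omega}_y\rangle=W(q)\delta_{\overline B}(t_y)^{-1/2}\mathcal W(t_y)$. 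Lemma \ref{lem:pairing}(1), together with the eigen-relation $h_y\cdot\mathcal W=\chi(h_y)\mathcal W$, gives
\[\langle\mathcal W,\Phi^{\overline\omega}_0*h_y\rangle=\chi(\mathrm{ch}V_y)\langle\mathcal W,\Phi^{\overline\omega}_0\rangle=\chi(\mathrm{ch}V_y)W(q)\mathcal W(e).\]
Dividing by $W(q)\delta_{\overline B}(t_y)^{-1/2}$ yields the formula.

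Finally, for the nonvanishing of $\mathcal W(e)$: if $\mathcal W(e)=0$, the formula forces $\mathcal W(t_y)=0$ for all $y\in X^+$, and hence for all $y\in X$. By the Iwasawa decomposition and right $K$-invariance, $\mathcal W$ would then vanish identically, contradicting $\mathcal W\neq0$.

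There is no real obstacle here. The only point requiring a moment's care is applying Theorem \ref{thm:freeness_general} to $\overline\omega$ rather than $\omega$, which is legitimate because $\overline\omega=\omega^{-1}$ has the same conductor.
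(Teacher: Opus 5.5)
Your proposal is correct and is the paper's proof: rerun the proof of Theorem~\ref{thm:CS_O} with Theorem~\ref{thm:freeness_general}, applied to $\overline\omega$, in place of Corollary~\ref{cor:freeness_O}. Your Iwasawa-decomposition argument for $\mathcal W(e)\neq0$ simply spells out the ``in particular'' that the paper leaves implicit.
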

\begin{proof}
The proof of Theorem \ref{thm:CS_O} applies verbatim after replacing Corollary \ref{cor:freeness_O} by Theorem \ref{thm:freeness_general} for $\overline\omega$.
\end{proof}

\subsection{The general genericity criterion}

Recall the central element $Z = \prod_{a\in\Sigma}f_a$ defined in Section \ref{sec:cs}. It lies in $\mathcal{A}^W$ for every unramified $G$, since the coroots lie in $X$ and the parameters are $W$-invariant. Only its factorization $Z = \Omega^-\Omega^+$ required $\rho^\vee \in X$.

\begin{lemma}\label{lem:functional_nonvanishing}
Let $\omega$ be a character of $\overline U$ of conductor $\mathcal O$. Every non-zero $\omega$-Whittaker functional $\lambda$ on $V_G(\chi)$ satisfies $\lambda(v_K)\neq0$.
\end{lemma}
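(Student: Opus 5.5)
Given a non-zero $\omega$-Whittaker functional $\lambda$ on $V := V_G(\chi)$, we form the Whittaker map $v \mapsto \mathcal W_v$, $\mathcal W_v(g) := \lambda(gv)$. This is a $G$-equivariant map $V \to \mathrm{Ind}_{\overline U}^G\,\omega$, and it is non-zero because $\lambda \neq 0$. Since $V$ is irreducible, the map is injective. Hence $\mathcal W := \mathcal W_{v_K}$ is a non-zero element of $(\mathrm{Ind}_{\overline U}^G\,\omega)^K$.

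Equivariance gives, for every $h \in \mathcal H_K$,
\[
h\cdot\mathcal W = \mathcal W_{h v_K} = \chi(h)\,\mathcal W,
\]
because $\mathcal H_K$ acts on the line $V^K = \mathbb C v_K$ through $\chi$ (Definition \ref{def:spherical_rep}). So $\mathcal W$ is a spherical Whittaker function with parameter $\chi$ in the sense of Section \ref{sec:cs}. Theorem \ref{thm:CS_general}, which holds for every unramified $G$ and every character of conductor $\mathcal O$, then gives $\mathcal W(e) \neq 0$. Since $\mathcal W(e) = \lambda(v_K)$, the lemma follows.

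The only step needing care is the last clause of Theorem \ref{thm:CS_general}, and it reduces to the earlier results. Suppose $\mathcal W(e) = 0$. The displayed formula then gives $\mathcal W(t_y) = 0$ for all $y \in X^+$, and the vanishing statement gives $\mathcal W(t_y) = 0$ for $y \notin X^+$. By the Iwasawa decomposition $G = \bigsqcup_y \overline U\, t_y K$ and right $K$-invariance, this forces $\mathcal W = 0$, a contradiction.

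Alternatively, one can argue directly through the pairing. By Lemma \ref{lem:pairing}(2) at $y=0$, $\langle \mathcal W, \Phi_0^{\overline\omega}\rangle = W(q)\,\mathcal W(e)$. By Theorem \ref{thm:freeness_general} applied to $\overline\omega$, together with Lemma \ref{lem:pairing}(1), $\langle \mathcal W, \Phi_0^{\overline\omega} * h\rangle = \chi(h)\,W(q)\,\mathcal W(e)$ for every $h \in \mathcal H_K$. Since the elements $\Phi_0^{\overline\omega} * h$ exhaust $(\mathrm{ind}_{\overline U}^G\,\overline\omega)^K$, if $\mathcal W(e) = 0$ then $\mathcal W$ pairs to zero with all of it, and in particular with every $\Phi^{\overline\omega}_y$. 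Lemma \ref{lem:pairing}(2) then gives $\mathcal W(t_y) = 0$ for all $y \in X^+$, and Lemma \ref{lem:whittaker_support}(1) gives vanishing for $y \notin X^+$. Hence $\mathcal W = 0$.
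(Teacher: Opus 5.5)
Your proof is correct and follows the paper's argument. You form $\mathcal W(g)=\lambda(gv_K)$, check that it is a non-zero spherical Whittaker function with parameter $\chi$ (non-vanishing comes from irreducibility), and invoke the last clause of Theorem~\ref{thm:CS_general} to get $\lambda(v_K)=\mathcal W(e)\neq0$; your two supplementary justifications of that clause, via the Iwasawa decomposition or via the pairing and Theorem~\ref{thm:freeness_general}, are also sound but not needed.
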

\begin{proof}
Define $\mathcal W(g):=\lambda(gv_K)$. This is a spherical Whittaker function with parameter $\chi$. It is non-zero because $v_K$ generates the irreducible representation $V_G(\chi)$, so Theorem \ref{thm:CS_general} implies $\lambda(v_K)=\mathcal W(e)\neq0$.
\end{proof}

\begin{theorem}[Genericity, general case]\label{thm:genericity_general}
Let $G$ be an arbitrary unramified group over $F$, let $\chi$ be an algebra character of $\mathcal{H}_K$, and let $\omega$ be any character of $\overline U$ of conductor $\mathcal O$. Then $V_G(\chi)$ is $\omega$-generic if and only if
$$\chi(Z) \neq 0, \qquad \text{equivalently} \qquad \chi\Big(\prod_{a\in\Sigma}f_a^{\mathrm{inv}}\Big) \neq 0,$$
with $f_a^{\mathrm{inv}}$ as in Corollary \ref{cor:genericity}.
\end{theorem}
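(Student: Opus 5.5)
The plan is to transfer the genericity question to $G''$ by pulling back along the central isogeny $p$, and to use Corollary \ref{cor:genericity} there. The equivalence with the $f_a^{\mathrm{inv}}$ form follows verbatim from the pairing identity at the end of the proof of Corollary \ref{cor:genericity}. That computation, $Z=q_{w_0}^2\prod_{a\in\Sigma}f_a^{\mathrm{inv}}$, takes place in $\mathcal A$ and does not use $\rho^\vee\in X$. So it suffices to prove that $V_G(\chi)$ is $\omega$-generic if and only if $\chi(Z)\neq0$.

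First I extend $\chi$ from $\mathcal A^W$ to a character $\chi''$ of $(\mathcal A'')^W$. This is possible because $(\mathcal A'')^W$ is integral over $\mathcal A^W$: it is a finite extension, since $X''/X$ has order at most two. Lying over then gives the extension. Next I check that $V_{G''}(\chi'')\circ p$ contains $V_G(\chi)$. Let $v''$ be the spherical vector of $V_{G''}(\chi'')$. Since $p(K)\subseteq K''$, the vector $v''$ is $K$-fixed for $\pi''\circ p$. By Corollary \ref{cor:hecke_action_compatibility} and Lemma \ref{lem:j_embedding}(1), $\mathcal H_K$ acts on $v''$ through $\chi''|_{\mathcal A^W}=\chi$. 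The $G$-subrepresentation generated by $v''$ is therefore a quotient of the spherical module generated by a $\chi$-eigenvector.

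The problem is that this subrepresentation need not be irreducible. So instead of working with it, I plan to work directly with Whittaker functions, which is cleaner. The forward direction runs as follows. If $V_G(\chi)$ is $\omega$-generic, Lemma \ref{lem:functional_nonvanishing} and Theorem \ref{thm:CS_general} give a spherical Whittaker function $\mathcal W$ with $\mathcal W(e)\ne0$.

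To relate this to $Z$, I compare $\mathcal W$ with the $G''$ side. Define $\mathcal W''$ on $G''$ by the Casselman--Shalika values $\mathcal W''(t''_z)=\delta^{1/2}\chi''(\mathrm{ch}V_z)$ for $z\in(X'')^+$. On $G''$, Theorem \ref{thm:CS_O} together with the existence of a spherical Whittaker function, obtained via freeness and the pairing of Lemma \ref{lem:pairing}, shows that such a function exists and is an eigenfunction. Its pullback $\mathcal W''\circ p$ agrees with $\mathcal W$ on each $t_y$, $y\in X^+$. Hence $\mathcal W$ generates the same $G$-module as $\mathcal W''\circ p$. Then $V_G(\chi)$, being the unique irreducible quotient with $K$-fixed vectors, is generic exactly when the Whittaker functional $\mathcal W''(e)$ factors through the irreducible spherical constituent.

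The cleaner route, and the one I would actually follow, is a direct reduction to Theorem \ref{thm:genericity_Z} via Borel-type equivalence. Let $\lambda$ be an $\omega$-Whittaker functional on $V_G(\chi)$. By Fact \ref{fact:borel} and Frobenius reciprocity, $\omega$-genericity of $V_G(\chi)$ corresponds, through its $I$-invariants, to a nonzero $\mathcal H$-map $M_G(\chi)\to(\mathrm{Ind}\,\omega)^I$. Transport the identification of Fact \ref{fact:CS} to conductor $\mathcal O$ with $\mathrm{Ind}$, and use that the dual of $\mathcal H e_{\mathrm{sgn}}$ is $e_{\mathrm{sgn}}$ applied to the module; this is the criterion \cite[Corollary 5.2]{LuoGCS}. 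The result is that genericity is equivalent to $e_{\mathrm{sgn}}M_G(\chi)\ne0$, up to the conductor shift.

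It then remains to show that $e_{\mathrm{sgn}}M_G(\chi)\neq0$ if and only if $\chi(Z)\ne0$ without $\Omega^\pm$. I would prove this via $j$. Choose $\chi''$ extending $\chi$ as above. Then $j$ makes $M_{G''}(\chi'')$ an $\mathcal H$-module. The $\mathcal H$-submodule $\mathcal H v''$ has $v''$ as a $\chi$-eigenvector, and by Lemma \ref{lem:spherical_unique} it maps onto $M_G(\chi)$. Since $j(e_{\mathrm{sgn}})=e''_{\mathrm{sgn}}$, Theorem \ref{thm:genericity_Z} on $G''$ shows that $e_{\mathrm{sgn}}\mathcal Hv''$ is spanned by $\Omega^-v''$, which is nonzero iff $\chi''(Z)=\chi(Z)\ne0$. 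This uses that $Z\in\mathcal A^W$ and that $\Sigma$ and the parameters agree for $G$ and $G''$ by Lemma \ref{lem:extended_group}.

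The main obstacle is passing from $\mathcal Hv''$ to its simple quotient $M_G(\chi)$. I would handle it by showing that the kernel meets neither $e_K$ nor the sign part nontrivially. The argument is that $\Omega^+\Omega^-v''=\chi(Z)v''$ is a spherical vector, which is nonzero in the quotient. Conversely, if $\chi(Z)=0$ then $e_{\mathrm{sgn}}\mathcal Hv''=0$ already, so the image also vanishes.
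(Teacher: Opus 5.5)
Your reduction to the $Z$-form is fine. So is your extension of $\chi$ to $\chi''$ by lying over in the integral extension $\mathcal A^W\subseteq(\mathcal A'')^W$, in place of the paper's extension of $z$ using divisibility of $\mathbb C^\times$. The gap is in the route you say you would actually follow. It goes through $e_{\mathrm{sgn}}M_G(\chi)$, and when $\rho^\vee\notin X$ that is the wrong invariant: both of your intermediate equivalences are false.

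First, \cite[Corollary 5.2]{LuoGCS} and Fact~\ref{fact:CS} concern characters of conductor $\mathfrak p$. The ``conductor shift'' to $\mathcal O$ is conjugation by $t_{\rho^\vee}$, which does not exist in $G(F)$ when $\rho^\vee\notin X$.

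Second, your claim that $e_{\mathrm{sgn}}\mathcal Hv''$ is spanned by $\Omega^-v''$ conflates $j(e_{\mathrm{sgn}}\mathcal He_K)$ with $e''_{\mathrm{sgn}}\mathcal H''e_{K''}$. Since $\mathcal He_K=\mathcal Ae_K$ and $\Omega^-\notin\mathcal A$, Proposition~\ref{prop:esgnHeK} for $G''$ and injectivity of $j$ give only $j(e_{\mathrm{sgn}}\mathcal He_K)=\bigl((\mathcal A'')^W\Omega^-\cap\mathcal A\bigr)e_{K''}$. For the same reason $\Omega^{\pm}$ do not act on your quotient $M_G(\chi)$ of $\mathcal Hv''$, so the closing step with $\Omega^+\Omega^-v''=\chi(Z)v''$ has no meaning there.

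Remark~\ref{rem:SL2} is an explicit counterexample to both steps. Take $G=\mathrm{SL}_2$ and $z(\alpha^\vee)=-1$. Then $\chi(Z)=(1+q)^2\neq0$, so $V_G(\chi)$ is $\omega$-generic. Yet $z''(\rho^\vee)=\pm i$ gives $e_{\mathrm{sgn}}\mathcal Hv''=\chi''(b+b^{-1})\,\mathbb C\,\Omega^-v''=0$, and indeed $e_{\mathrm{sgn}}M_G(\chi)=0$. Conductor-$\mathcal O$ genericity on $G$ cannot be read off from the sign-isotypic part of $M_G(\chi)$.

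The missing idea is the one you flagged and then sidestepped when you noted that the $G$-span of $v''$ need not be irreducible. By Silberger's isogeny-restriction theorem \cite{Sil79}, $\pi''\circ p=\bigoplus_{i=1}^m\pi_i$ is a finite direct sum of irreducibles. Write $v_{K''}=\sum_iv_i$. Each $v_i$ is $K$-fixed with $\mathcal H_K$ acting through $\chi$ (Corollary~\ref{cor:hecke_action_compatibility}), so $\pi_i\cong V_G(\chi)$ whenever $v_i\neq0$.

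One then transfers Whittaker functionals rather than sign parts. Suppose $\chi(Z)=\chi''(Z)\neq0$. Corollary~\ref{cor:genericity} for $G''$ gives a non-zero $\omega''$-functional $\lambda''$, and Lemma~\ref{lem:functional_nonvanishing} for $G''$ gives $\lambda''(v_{K''})\neq0$. Hence $\lambda''(v_i)\neq0$ for some $i$, and $\lambda''|_{\pi_i}$ is an $\omega$-Whittaker functional on $V_G(\chi)$, because $p:\overline U\to\overline U''$ is an isomorphism. Conversely, fix $i$ with $v_i\neq0$. A non-zero $\omega$-functional $\lambda_i$ on $\pi_i\cong V_G(\chi)$ gives the non-zero $\omega''$-functional $\lambda_i\circ\mathrm{pr}_i$ on $\pi''$. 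Corollary~\ref{cor:genericity} for $G''$ then yields $\chi''(Z)\neq0$.

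Your first, abandoned Whittaker-function route stalls at exactly this point. Knowing $\mathcal W=\mathcal W''\circ p$ does not tell you whether the functional factors through $V_G(\chi)$ without this direct-sum decomposition.
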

\begin{proof}
Let $\omega''$ be the character of $\overline U''$ defined by $\omega''(p(u))=\omega(u)$ for $u\in\overline U$. It has conductor $\mathcal O$ by Lemma \ref{lem:extended_group}. Choose a homomorphism $z:X\to\mathbb C^\times$ representing $\chi$. Since $\mathbb C^\times$ is divisible, $z$ extends to a homomorphism $z'':X''\to\mathbb C^\times$. Let $\chi''$ be the corresponding algebra character of $\mathcal H_{K''}''\cong(\mathcal A'')^W$. Then $\chi''\circ j|_{\mathcal H_K}=\chi$ and $\chi''(Z)=\chi(Z)$.

Put $\pi''=V_{G''}(\chi'')$. The isogeny-restriction theorem of Silberger \cite{Sil79} gives a finite direct-sum decomposition
$$\pi''\circ p=\bigoplus_{i=1}^m\pi_i$$
into irreducible admissible representations of $G(F)$. Write the spherical vector as $v_{K''}=\sum_i v_i$ with $v_i\in\pi_i$. Every $v_i$ is fixed by $K$. For $h\in\mathcal H_K$, Corollary \ref{cor:hecke_action_compatibility} shows that
$$
(\pi''\circ p)(h)v_{K''}
=\pi''\bigl(j(h)\bigr)v_{K''}
=\chi''\bigl(j(h)\bigr)v_{K''}
=\chi(h)v_{K''}.
$$
Projecting this equality to $\pi_i$, we obtain
$$\pi_i(h)v_i=\chi(h)v_i\qquad(h\in\mathcal H_K).$$
Thus Fact \ref{fact:borel} and Lemma \ref{lem:spherical_unique} show that
$$\pi_i\cong V_G(\chi)\qquad\text{whenever }v_i\neq0.$$

Suppose first that $\chi(Z)\neq0$. Corollary \ref{cor:genericity} for $G''$ gives a non-zero $\omega''$-Whittaker functional $\lambda''$ on $\pi''$. By Lemma \ref{lem:functional_nonvanishing}, $\lambda''(v_{K''})\neq0$, so $\lambda''(v_i)\neq0$ for some $i$. The restriction of $\lambda''$ to $\pi_i$ is non-zero and, for $u\in\overline U$ and $v\in\pi_i$, satisfies $\lambda''(uv)=\omega''(p(u))\lambda''(v)=\omega(u)\lambda''(v)$. It is therefore an $\omega$-Whittaker functional. Since $v_i\neq0$, Fact \ref{fact:borel} and Lemma \ref{lem:spherical_unique} identify $\pi_i$ with $V_G(\chi)$, and hence $V_G(\chi)$ is $\omega$-generic.

Conversely, suppose that $V_G(\chi)$ is $\omega$-generic. Choose $i$ with $v_i\neq0$ and identify $\pi_i$ with $V_G(\chi)$. If $\lambda_i$ is a non-zero $\omega$-Whittaker functional on $\pi_i$ and $\operatorname{pr}_i:\pi''\to\pi_i$ is the $G(F)$-equivariant projection, then $\lambda_i\circ\operatorname{pr}_i$ is a non-zero $\omega''$-Whittaker functional on $\pi''$, because $p:\overline U(F)\xrightarrow{\sim}\overline U''(F)$. Corollary \ref{cor:genericity} for $G''$ now implies $\chi(Z)=\chi''(Z)\neq0$.

This proves the genericity criterion. The second form follows from $Z=q_{w_0}^2\prod_{a\in\Sigma}f_a^{\mathrm{inv}}$ as in the proof of Corollary \ref{cor:genericity}.
\end{proof}

\begin{remark}[Conductor $\mathfrak{p}$ fails in general]\label{rem:SL2}
Take $G=\mathrm{SL}_2$, so $X=\mathbb Z\alpha^\vee$, $\rho^\vee=\tfrac12\alpha^\vee\notin X$, and $G''=\mathrm{PGL}_2$. Put $b:=\theta''_{\rho^\vee}$. Then
$$\mathcal A=\mathbb C[b^{\pm2}],\qquad \mathcal A''=\mathbb C[b^{\pm1}],\qquad
\Omega^+=b(1-qb^{-2}),\qquad \Omega^-=b^{-1}(1-qb^2).$$
The ring $\mathcal A$ is the fixed ring of $b\mapsto-b$, whereas $\Omega^-$ is anti-invariant. Since the odd part of $(\mathcal A'')^W=\mathbb C[b+b^{-1}]$ is $(b+b^{-1})\mathcal A^W$, it follows that
$$(\mathcal A'')^W\Omega^-\cap\mathcal A=\mathcal A^W(b+b^{-1})\Omega^-.$$
Lemma \ref{lem:j_embedding} and Proposition \ref{prop:esgnHeK}, applied to $G''$, now yield the first identity below. The symmetric argument using Proposition \ref{prop:eKHe_sgn} yields the second:
$$e_{\mathrm{sgn}}\mathcal He_K=\mathcal A^W\Omega_1^-e_K,\qquad
e_K\mathcal He_{\mathrm{sgn}}=\mathcal A^W\Omega_1^+e_{\mathrm{sgn}},$$
where
\begin{align*}
\Omega_1^-&:=(b+b^{-1})\Omega^-=(1+\theta_{-\alpha^\vee})(1-q\theta_{\alpha^\vee}),\\
\Omega_1^+&:=(b+b^{-1})\Omega^+=(1+\theta_{\alpha^\vee})(1-q\theta_{-\alpha^\vee}).
\end{align*}
The proof of Theorem \ref{thm:genericity_Z} uses only these two identities and therefore shows
$$e_{\mathrm{sgn}}M_G(\chi)\ne0\quad\Longleftrightarrow\quad\chi(Z_1)\ne0,\qquad
Z_1:=\Omega_1^-\Omega_1^+=(2+\theta_{\alpha^\vee}+\theta_{-\alpha^\vee})Z.$$
Let $\chi$ be represented by $z:X\to\mathbb C^\times$ with $z(\alpha^\vee)=-1$. Then $\chi(Z)=(1+q)^2\ne0$, whereas $\chi(Z_1)=0$. Thus Theorem \ref{thm:genericity_general} makes $V_G(\chi)$ $\omega$-generic for every character $\omega$ of $\overline U$ of conductor $\mathcal O$, while \cite[Corollary 5.2]{LuoGCS} shows that it admits no Whittaker functional of conductor $\mathfrak p$. Hence the equivalence between the two conductors in Corollary \ref{cor:genericity} can fail when $\rho^\vee\notin X$.
\end{remark}

\par
\nobreak
\begingroup
\footnotesize
\interlinepenalty=10000
\par\addvspace{\bigskipamount}\indent
{\scshape Department of Mathematics, University of Utah, Salt Lake City, UT 84112}\par
\nobreak\indent{\itshape Email address}\/:\space
\texttt{yiluo@math.utah.edu}\par
\endgroup

\end{document}